 \theoremstyle{plain}
 \newtheorem{thm}{Theorem}[section]
    \newtheorem{claim}{Claim}[section]
\newtheorem{defn/thm}[thm]{Definition/Theorem}
 \newtheorem{cor}[thm]{Corollary}
 \newtheorem{lem}[thm]{Lemma}
 \newtheorem{prop}[thm]{Proposition}
 \newtheorem{conj}[thm]{Conjecture}
 \theoremstyle{definition}
 \newtheorem{defn}[thm]{Definition}
 \newtheorem*{thm*}{Theorem}
 \theoremstyle{remark}
 \newtheorem{rmk}[thm]{Remark}
 \newtheorem{question}[thm]{Question}
\def\beq{\begin{eqnarray}}
\def\eeq{\end{eqnarray}}
 \newcommand{\bp}{\begin{proof}[Proof]}
 \newcommand{\ep}{\end{proof}}
\DeclareSymbolFont{bbold}{U}{bbold}{m}{n}
\DeclareSymbolFontAlphabet{\mathbbold}{bbold}
\def\Spec{{\rm Spec}}
\def\Gal{{\rm Gal}}
\def\Pic{{\rm Pic}}
\def\K3{{\rm K3}}
\def\Imag{{\rm Im}}
\def\GL{{\rm GL}}
\def\crys{{\rm crys}}
\def\Int{{\rm Int}}
\def\dR{{\rm dR}}
\def\Betti{{\rm Betti}}
\def\height{{\rm ht}}
\def\gr{{\rm gr}}
\def\Ext{{\rm Ext}}
\def\Tors{{\rm Tors}}
\def\Brauer{{\rm Br}}
\newcommand{\pdiv}{\mc{G}}
\newcommand{\defeq}{\vcentcolon=}
\newcommand\nc{\newcommand}
\begin{document}

\title{Calabi-Yau threefolds over finite fields and torsion in cohomologies}

\author{Yeuk Hay Joshua Lam}

\date{\today}

\begin{abstract} 
We study various examples of Calabi-Yau threefolds over finite fields. In particular, we provide a counterexample to a conjecture of K. Joshi on lifting Calabi-Yau threefolds to characteristic zero. We also compute the $p$-adic  cohomologies of some  Calabi-Yau threefolds constructed by Cynk-van Straten which have remarkable arithmetic  properties, as well as those of  the Hirokado threefold. These examples and computations answer some outstanding questions of B. Bhatt, T. Ekedahl,  van der Geer-Katsura and Patakfalvi-Zdanowicz, and   shed new light on the Beauville-Bogomolov decomposition in positive characteristic. Our tools include $p$-adic Hodge theory as well as classical algebraic topology. We also give potential examples showing that   Hodge numbers of threefolds in positive characteristic are not derived invariants, contrary to the case of characteristic zero.
\end{abstract}

\maketitle 
\tableofcontents
\setcounter{tocdepth}{1}
\section{Introduction}
In this paper we investigate various  arithmetic properties of  Calabi-Yau varieties over finite fields. The common theme tying together all the results of this paper is that for a variety over a mixed characteristic local ring, the classical topology (including torsion) of the generic fiber has control over  all the cohomologies of the special fiber; the most general form of this phenomenon is due to the recent work of Bhatt-Morrow-Scholze, although we will only need older results of Caruso and Faltings.

\subsection{A conjecture of K. Joshi}
In a fascinating work \cite{joshi} K. Joshi studies many properties of surfaces and threefolds over finite fields, and states the following

\begin{conj}[Conjecture 7.7.1 of \cite{joshi}]\label{conj:joshiintro}
Let $X$ be a smooth proper Calabi-Yau threefold over a finite field $k$. Then $X$ lifts to characteristic zero if and only if 
\begin{enumerate}
    \item $H^0(X, \Omega^1)=0$, and 
    \item $X$ is \textit{classical}.
\end{enumerate}
\end{conj}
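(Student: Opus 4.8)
As the abstract indicates, this conjecture is in fact \emph{false}, so rather than a proof the natural task is to pin down a counterexample: a smooth proper Calabi--Yau threefold $X$ over a finite field $k$ of small characteristic $p$ (I expect $p=2$ or $p=3$) satisfying (1) $H^0(X,\Omega^1)=0$ and (2) classicality, yet not liftable to characteristic zero. The ``only if'' direction should go through as in \cite{joshi}: the geometric generic fibre of a lift $\mathcal X$ is a complex Calabi--Yau threefold, so $b_1(\mathcal X_{\overline K})=0$, hence $b_1(X)=0$ by smooth proper base change, and then, since $h^{0,1}=h^1(X,\mathcal O_X)=0$, classicality (degeneration of Hodge--de Rham at $E_1$) forces $h^0(X,\Omega^1)=h^{1,0}=b_1(X)=0$. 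The content is the converse, and that is what I would try to break.

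First I would look among the exotic Calabi--Yau threefolds available in characteristics $2$ and $3$ --- the Hirokado threefold, Schr\"oer's examples, and above all the fibred threefolds of Cynk--van Straten --- for one that is classical with $H^0(\Omega^1)=0$ but whose cohomology carries structure a complex Calabi--Yau threefold cannot. The relevant constraints on a complex Calabi--Yau threefold $Y$ are: it is K\"ahler with $h^1(\mathcal O_Y)=0$, hence (by the Beauville--Bogomolov decomposition) has finite $\pi_1$, satisfies Hodge symmetry so $h^{2,0}(Y)=h^{0,2}(Y)=0$, and by Poincar\'e duality together with universal coefficients has $H^1(Y,\mathbb Z)=0$, $H^2(Y,\mathbb Z)_{\mathrm{tors}}\cong H^5(Y,\mathbb Z)_{\mathrm{tors}}\cong\pi_1^{\mathrm{ab}}$, and $H^3(Y,\mathbb Z)_{\mathrm{tors}}\cong H^4(Y,\mathbb Z)_{\mathrm{tors}}$. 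In characteristic $p$ none of these is automatic, and the plan is to find $X$ violating one of the stable ones --- for instance with $H^0(X,\Omega^2)\neq 0$, or with extra torsion in crystalline or $\ell$-adic cohomology.

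The argument then has two halves. \emph{(a) Computing the cohomology of $X$.} Realize $X$ as a resolution of an explicit singular model (a double cover, or a small resolution of a fibre product of rational elliptic surfaces), and determine $H^\ast_{\mathrm{dR}}(X/k)$, $H^\ast_{\mathrm{crys}}(X/W)$ and the Hodge numbers by combining the geometry of the resolution with the classical algebraic topology of lifts of the auxiliary pieces (this is how one gets at the torsion), running the result through the integral $p$-adic comparison theorems --- Bhatt--Morrow--Scholze, or the older bounds of Caruso and Faltings --- to pin down the $p$-torsion, and checking that Hodge--de Rham degenerates (so $X$ is classical) and $H^0(X,\Omega^1)=0$. \emph{(b) Obstructing lifting.} Suppose $X$ extends to a smooth proper $\mathcal X$ over a finite extension $\mathcal O$ of $W(k)$; after base change I may take $\mathcal O$ unramified or only mildly ramified. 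Then $\mathcal X_{\overline K}$ is a complex Calabi--Yau threefold and obeys the constraints above, while $H^i_{\mathrm{et}}(X_{\bar k},\mathbb Z_\ell)\cong H^i_{\mathrm{et}}(\mathcal X_{\overline K},\mathbb Z_\ell)$ for $\ell\neq p$, and the $p$-torsion of $H^i_{\mathrm{crys}}(X/W)$ is controlled by that of $H^i_{\mathrm{et}}(\mathcal X_{\overline K},\mathbb Z_p)$. Comparing with part (a) yields either a torsion group that cannot appear in the integral cohomology of a complex Calabi--Yau threefold, or --- via semicontinuity of $h^1(\Omega^1)$ together with $b_2(X)=b_2(\mathcal X_{\overline K})$ and classicality --- the conclusion $h^{2,0}(X)=0$, a contradiction either way.

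The main obstacle is part (a) compounded by the small-characteristic problem in part (b). Characteristics $2$ and $3$ are precisely where exotic Calabi--Yau threefolds live, but also where integral $p$-adic Hodge theory is most delicate: Caruso's bound needs ramification $e<p-1$, which fails at $p=2$ and is borderline at $p=3$, so one must either stay inside a Fontaine--Laffaille range, lean on the more robust inequalities of Bhatt--Morrow--Scholze, or --- best of all --- arrange the obstruction to be visible through $\ell$-adic cohomology and the characteristic-free semicontinuity/classicality argument. Identifying which exotic threefold actually does the job, and carrying out its de Rham and crystalline cohomology computation rigorously, is where the real work lies.
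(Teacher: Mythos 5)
Your proposal goes after the wrong direction, and the step where you dispose of the other direction contains exactly the error that the paper exploits. You assert that the ``only if'' direction is safe because for a liftable classical $X$ one has $h^{1,0}=b_1(X)=0$ once Hodge--de Rham degenerates. But degeneration only gives $h^{1,0}+h^{0,1}=\dim_k H^1_{\dR}(X)$, and $\dim_k H^1_{\dR}(X)$ is \emph{not} $b_1$: by the very comparison theorems you invoke later (Caruso/Faltings), for a smooth proper $\mc{X}/\mc{O}$ in the Fontaine--Laffaille range one has $\dim_k H^1_{\dR}(X_k)=\dim_{\mb{F}_p}H^1(\mc{X}_{\overline K},\mb{F}_p)$, which picks up $p$-torsion in the integral Betti cohomology of the generic fibre even when $b_1=0$. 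This is precisely how the paper refutes the conjecture: take the free quotient $X$ of a quintic hypersurface in $\mb{P}^4_{\mb{Z}_5}$ by a $\mu_5$-action (the Godeaux Calabi--Yau threefold, via the Tate--Oort group scheme of Kim--Reid). The generic fibre has $\pi_1=\mb{Z}/5$, so $H^1(\tilde X(\mb{C}),\mb{F}_5)=\mb{F}_5$ and hence $H^1_{\dR}(X_{\mb{F}_5})\cong\mb{F}_5$; since $\Pic^\tau(X)\cong\mb{Z}/5$ is \'etale, $h^{0,1}=\dim T_e\Pic(X)=0$, forcing $h^{1,0}=1$. Thus $X$ is liftable (hence classical) but violates condition (1), killing the ``only if'' direction at $p=5$ --- no exotic small-characteristic geometry needed.

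Your actual plan --- to break the converse (``(1) and (2) imply liftable'') --- is the direction the paper explicitly leaves open (see the remark after the theorem: it would be ``very interesting to investigate the opposite direction''), and your proposal does not identify a candidate that works. Indeed the paper observes that all known non-liftable Calabi--Yau threefolds (Hirokado, the Cynk--van Straten $Y_3$, etc.) have $b_3=0$, i.e.\ they are \emph{not} classical, so none of the examples on your list can serve as the counterexample you describe; the paper even proves (Corollary on $W_2$-lifts) that a rigid Calabi--Yau threefold not lifting to $W_2$ automatically has $b_3=0$. A secondary point: the paper's working definition of ``classical'' is the nonvanishing of $b_3$, not degeneration of Hodge--de Rham, though the two are related in the rigid case.
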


Here we may take as definition the adjective \textit{classical} as meaning (for a threefold) $H^3(X, \bar{\mb{Q}}_l)=0$ (it is a consequence of the results in   \cite{joshi} that this is equivalent to the original definition of classical, which is in terms of Hodge-Witt cohomology; for the original definition we refer the reader to Section 7.4 of loc.cit.).  Note that if a Calabi-Yau threefold  is liftable to characteristic zero then it is automatically classical, by Artin comparison and classical Hodge theory over $\mb{C}$.

\begin{thm}\label{thm:joshiintro}
There exists a  counterexample over $\mb{F}_5$ to Conjecture \ref{conj:joshiintro}. More precisely, there exists a liftable $X/\mb{F}_5$ with $H^0(X, \Omega^1)\neq 0$.
\end{thm}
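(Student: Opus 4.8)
The plan is to produce the counterexample as the closed fiber of an explicit smooth proper Calabi--Yau threefold over the ring of integers of a finite, and crucially \emph{ramified}, extension $K/\mb Q_5$. Writing $\mc X/\mathcal O_K$ for this model and $X:=\mc X\otimes_{\mathcal O_K}\mb F_5$ for its special fiber, liftability of $X$ to characteristic zero holds \emph{by construction}, so the only things that remain are (i) to check that $X$ is genuinely a Calabi--Yau threefold in the sense of Conjecture~\ref{conj:joshiintro}, i.e.\ $\omega_X\cong\mathcal O_X$ and $H^1(X,\mathcal O_X)=H^2(X,\mathcal O_X)=0$, and (ii) to prove $H^0(X,\Omega^1_X)\ne 0$. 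Classicality --- condition (2) --- need not be addressed, since the paragraph before Theorem~\ref{thm:joshiintro} already records that a liftable Calabi--Yau threefold is automatically classical; thus exhibiting a liftable $X$ with $H^0(X,\Omega^1_X)\ne 0$ already contradicts the conjecture.

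The cohomological mechanism is the following. As $X$ is to be a Calabi--Yau threefold, $H^1(X,\mathcal O_X)=0$, so on the $E_1$-page of the Hodge--de Rham spectral sequence $E_1^{0,1}=0$ and the only nonzero graded piece of $H^1_{\dR}(X/\mb F_5)$ is $E_\infty^{1,0}\subseteq E_1^{1,0}=H^0(X,\Omega^1_X)$; hence it suffices to show $H^1_{\dR}(X/\mb F_5)\ne 0$. By the universal coefficient sequence for crystalline cohomology, $0\to H^1_{\crys}(X/W)/p\to H^1_{\dR}(X/\mb F_5)\to H^2_{\crys}(X/W)[p]\to 0$, and since $b_1(X)=0$ the first term is the mod $p$ reduction of a torsion module; thus $H^1_{\dR}(X/\mb F_5)\ne 0$ as soon as $H^1_{\crys}(X/W)$ or $H^2_{\crys}(X/W)$ carries $5$-torsion. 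This is exactly the kind of ``extra'' torsion that a ramified base makes possible: it is related, via the integral $p$-adic comparison theorems in the ramified case (Fontaine--Laffaille is inapplicable once $e(K/\mb Q_5)\ge p-1$, so one uses the theorems of Caruso and Faltings cited in the introduction, with their bounds on torsion in terms of ramification), to the torsion in $H^*_{\mathrm{et}}(\mc X_{\overline K},\mb Z_5)$ and hence, by Artin comparison, to the torsion in the singular cohomology $H^*(\mc X_{\overline K}^{\mathrm{an}},\mb Z)$ --- the ``classical algebraic topology'' input. So the whole theorem reduces to producing one liftable Calabi--Yau threefold $X/\mb F_5$ whose crystalline $H^1$ or $H^2$ has $5$-torsion.

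For the construction I would seek a Calabi--Yau threefold $X/\mb F_5$ carrying a nontrivial $5$-torsion line bundle $\mathcal L\in\Pic(X)[5]$ realized by a $\mu_5$-torsor $Y\to X$ with $Y$ smooth --- the characteristic-$5$ threefold counterpart of the inseparable canonical cover of a classical Enriques surface in characteristic $2$ --- for instance by taking $\mc X_K$ to be a free $\mb Z/5$-quotient of a smooth quintic (or of a more elaborate Calabi--Yau threefold, such as one of the Cynk--van Straten type) and choosing the good-reduction prime of $K$ so that the \'etale $\mb Z/5$-cover degenerates to the purely inseparable $\mu_5$-cover in characteristic $5$. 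Ramification of $K$ is what makes this compatible with $H^1(X,\mathcal O_X)=0$: over the \emph{un}ramified base the quotient stays \'etale, forces $H^1(\mathcal O)\ne 0$, and is not a strict Calabi--Yau threefold, whereas the ramified model carries the inseparable cover on its special fiber and retains $H^1(\mathcal O_X)=0$. Given such an $X$, one obtains $H^0(X,\Omega^1_X)\ne0$ either by the crystalline route above ($\mathcal L$ produces $5$-torsion in $H^2_{\crys}(X/W)$) or, more transparently, directly: because we are in characteristic $5$, the logarithmic derivatives of local generators of $\mathcal L$ patch to a global closed $1$-form $d\log\mathcal L\in H^0(X,\Omega^1_X)$, which is nonzero precisely because $\mathcal L$ is not $5$-divisible in $\Pic(X)$.

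The principal obstacle is the construction itself: one must secure \emph{simultaneously} $\omega_X\cong\mathcal O_X$; $H^1(X,\mathcal O_X)=H^2(X,\mathcal O_X)=0$, so that $X$ is a genuine Calabi--Yau threefold and not merely a variety with trivial canonical class (the naive \'etale $\mb Z/5$-quotient fails this, which is why a ramified model is unavoidable); good reduction of $\mc X$ over a sufficiently ramified $K$; and the presence on the special fiber of the $5$-torsion line bundle that actually yields a nonzero global $1$-form. Since $h^1(\mathcal O)$ can jump in a family, the vanishing $H^1(X,\mathcal O_X)=0$ is not automatic and requires a real argument --- a cohomology-and-base-change analysis of the chosen model, or an explicit description of $X$ as the quotient of a smooth threefold by a nowhere-vanishing $p$-closed vector field. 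The one remaining subtlety is the bookkeeping in Caruso's integral $p$-adic Hodge theory certifying that the crystalline torsion is present and of the right type; invoking the $d\log$ form is the most economical way to bypass the heaviest of those computations and conclude $H^0(X,\Omega^1_X)\ne0$ directly.
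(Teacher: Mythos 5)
Your overall strategy coincides with the paper's: quotient a quintic hypersurface in $\mb{P}^4$ by an order-$5$ group scheme that is \'etale on the generic fiber and infinitesimal on the special fiber, so that $H_1$ of the complex fiber is $\mb{Z}/5$, transport this torsion to $H^1_{\dR}$ of the special fiber via the Caruso--Faltings comparison (Theorem \ref{thm:carusofaltings}), and kill the $H^1(\mc{O})$ contribution by noting that $\Pic^{\tau}$ of a $\mu_5$-quotient is the \'etale group $\mb{Z}/5$, whence $H^1(X,\mc{O})=T_e\Pic(X)=0$; even the $d\log$ section of the $5$-torsion line bundle that you propose as a shortcut is exactly the remark the paper makes after Theorem \ref{thm:joshi} (following Cossec--Dolgachev: locally $y^5=b$ and the forms $db$ glue).

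The genuine error is your insistence that the base must be a \emph{ramified} extension of $\mb{Q}_5$, justified by the claim that ``over the unramified base the quotient stays \'etale.'' This conflates $\mu_5$ with the constant group scheme $\mb{Z}/5$: already over $\mb{Z}_5$ the group scheme $\mu_5$ has \'etale generic fiber (a nonsplit form of $\mb{Z}/5$, which acts freely on a generic invariant quintic, giving $\pi_1=\mb{Z}/5$ geometrically) and connected special fiber, so the degeneration from a free \'etale cover to a purely inseparable $\mu_5$-cover happens over the unramified base $W(k)$ --- which is where the paper works, via a $W(k)$-point of the Tate--Oort family over $\mb{Z}[S,t]/(St^{4}+5)$. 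Worse, forcing $e(K/\mb{Q}_5)\geq p-1$ as you suggest would violate the hypothesis $ie<p-1$ of Theorem \ref{thm:carusofaltings}, i.e.\ it would disable the very comparison theorem your crystalline argument relies on; with $e=1$ and $i=1$ one has $1<4$ and the argument goes through. Separately, you correctly identify but do not resolve the main constructive point --- the existence of a smooth Calabi--Yau quotient with $H^1(\mc{O})=0$ --- which the paper settles by citing Kim--Reid and by the explicit invariant-theory argument of Appendix \ref{appendix} (the $\mu_5$-action on $\mb{P}^4$ has isolated fixed points, a generic invariant quintic misses them, and triviality of $\omega$ descends from the cover).
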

The deformation theory of Calabi-Yau threefolds in positive characteristic is  mysterious, and most existing results require liftings to characteristic zero, making the latter an important problem to study.  As mentioned above, Hirokado constructed the first example of non-liftable ones, and by now there are several more examples of such; we refer the reader to \cite{takayamasurvey} for an excellent survey. 
In particular, the results of this paper show that all known examples of non-liftable Calabi-Yau threefolds satisfy $b_3=0$, i.e. the first condition in Joshi's conjecture, and it seems reasonable to ask if this is always the case:
\begin{question}
Do all non-liftable Calabi-Yau threefolds $X/\overline{\mb{F}}_p$ satisfy $b_3(X)=0$? 
\end{question} 

\subsection{Beauville-Bogomolov decomposition in positive characteristic} Over the complex numbers, varieties with trivial canonical bundle are built from three classes of varieties: abelian, hyperk"ahler, and Calabi-Yau. This important result is known as the Beauville-Bogomolov decomposition;  recently  Patakfalvi-Zdanowicz \cite{pz} have obtained interesting   results  towards a decomposition result in positive characteristic. It turns out that the Godeaux Calabi-Yau threefold example sheds new light on this question, and in particular answers a question in loc.cit. \cite[Question 13.6]{pz}. The details of this can be found in Section \ref{section:bbdecomposition}.

\subsection{The CvS Calabi-Yau threefolds}
We also study some beautiful Calabi-Yau threefolds constructed by Cynk-van Straten, who constructed a Calabi-Yau threefold $\mc{X}_5/\mc{O}$ (here $\mc{O}$ is the ring of integers of a certain number field) which is rigid in characteristic zero, but admits (obstructed) deformations in characteristic 5. Numerically what this means is that the Hodge number $h^{21}$ jumps from being $0$ to $1$ when we reduce $\bmod \ 5$; see Theorem \ref{thm:cvsintro} for the Hodge diamonds of both the generic and special fibers, and also \ref{thm:cyconditions} for more detail on these Calabi-Yau threefolds. We will refer to these as the CvS Calabi-Yau threefolds in what follows. 

We already saw an example of a Hodge number jumping in the special fiber in the counterexample to Joshi's conjecture above: there it was the Hodge number $h^{10}$ instead (and in fact on other components on the moduli space we may also have jumping behavior of $h^{01}$, or even both $h^{10}$ and $h^{01}$). There the reason was the torsion in the  first Betti cohomology of the generic fiber; in the case of the CvS Calabi-Yau threefold in characteristic $5$ we will show that the jumping behavior is not due to torsion in Betti cohomology, but rather a non-degeneration of the Hodge-de Rham spectral sequence. In fact, for the integral model $\mc{X}/\mc{O}$, all cohomologies are torsion free except for Hodge cohomologies, which are guaranteed to have torsion because of the jumping behaviour.

\begin{thm}\label{thm:cvsintro}
For the CvS Calabi-Yau threefold the following hold: 
\begin{enumerate}
    \item The variety $X_5/\mb{F}_5$ has torsion free crystalline cohomology; in particular the cohomology groups $H^*_{\dR}(\mc{X})$ of the integral model are torsion free as well. 
    
    \item The Hodge cohomology groups $H^i(\mc{X}, \Omega^j)$ are torsion free $\mc{O}$-modules, except for $H^2(\mc{X}, \Omega^1)$ and $H^2(\mc{X}, \Omega^2)$, which have  non-trivial  torsion.
    \item The Hodge diamond of $X_5$ is given by 
\begin{equation*}
\begin{array}{ccccccc}
&&& 1 &&&\\
&& 0 && 0 &&\\
& 0 && 39 && 0 &\\ 
1 && 1 && 1 && 1\\
& 0 && 39 && 0 &\\ 
&& 0 && 0 &&\\
&&& 1 &&&.\\
\end{array}
\end{equation*}
This should be compared with that of the generic fiber $X/K$, which was computed by Cynk-van Straten \cite[Section 5.2]{cvs}:  
\begin{equation*}
\begin{array}{ccccccc}
&&& 1 &&&\\
&& 0 && 0 &&\\
& 0 && 38 && 0 &\\ 
1 && 0 && 0 && 1\\
& 0 && 38 && 0 &\\ 
&& 0 && 0 &&\\
&&& 1 &&&.\\
\end{array}
\end{equation*}
Note in particular that Hodge symmetry holds for $X_5$, which is not generally the case for smooth proper varieties over finite fields.

    \item The special fiber $X_5$ is supersingular, i.e. its  Artin-Mazur formal group is isomorphic to $\hat{\mb{G}}_a$.
\end{enumerate}

\end{thm}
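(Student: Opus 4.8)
The plan is to carry everything through the Cynk--van Straten integral model $\mc{X}/\mc{O}$ --- a (crepant, and in the fiber-product picture even small) resolution of an explicit singular variety $Y/\mc{O}$, smooth and proper over $\mc{O}$ after localizing at a prime above $5$ if necessary --- and to use the principle stressed in the introduction: torsion in the classical topology of the generic fiber controls all torsion in every cohomology theory of $X_5$. The crucial first step is to compute $H^\ast(X_{\mb{C}},\mb{Z})$ and check that it is torsion free. Since $Y_{\mb{C}}$ has only nodal (or quotient) singularities and $\mc{X}_{\mb{C}}\to Y_{\mb{C}}$ is explicit, this is done by a Mayer--Vietoris / decomposition-theorem argument expressing $H^\ast(X_{\mb{C}},\mb{Z})$ in terms of $H^\ast(Y_{\mb{C}},\mb{Z})$ (known from the construction --- a fiber product of rational elliptic surfaces, or a product-quotient) and the cohomology of the exceptional loci, all torsion free, checking that the gluing maps create no torsion. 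Then $H^\ast_{\mathrm{et}}(X_{\bar{K}},\mb{Z}_p)$ is torsion free, so the comparison theorems of Caruso and Faltings (or of Bhatt--Morrow--Scholze) force $H^\ast_{\crys}(X_5/W)$ to be torsion free; and since $H^n_{\crys}(X_5/W)=H^n_{\dR}(\mc{X}_W/W)$ --- and likewise at the other primes, $\mc{X}$ being smooth --- the $\mc{O}$-modules $H^n_{\dR}(\mc{X})$ are torsion free. This is part (1); it also yields $b_\bullet(X_5)=b_\bullet(X_K)=(1,0,38,2,38,0,1)$ and, by universal coefficients for the perfect de Rham complex, $\dim_{\mb{F}_5}H^n_{\dR}(X_5)=b_n$.

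Next the Hodge diamond and part (2). One has $h^{0,0}(X_5)=1$ (connectedness) and $h^{3,0}(X_5)=h^0(\Omega^3)=1$ (triviality of $\omega_{X_5}$), so Serre duality gives $h^{0,3}(X_5)=h^{3,3}(X_5)=1$. The vanishings $h^{1,0}(X_5)=h^{0,1}(X_5)=h^{2,0}(X_5)=h^{0,2}(X_5)=0$ I would read off from the geometry: $Y_5$ has rational singularities, so $H^q(X_5,\mc{O})=H^q(Y_5,\mc{O})$, and the explicit (quotient / fiber-product) presentation of $Y_5$ forces $h^1(\mc{O})=h^2(\mc{O})=0$ and the absence of global holomorphic $1$- and $2$-forms; Serre duality then kills the remaining entries of the outer two rings. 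For the interior, the Cynk--van Straten analysis gives $h^{2,1}(X_5)=\dim H^1(X_5,T_{X_5})=\dim H^1(X_5,\Omega^2)=1$ (this is exactly the space of obstructed characteristic-$5$ deformations), whence $h^{1,2}(X_5)=1$ by Serre duality, and the identity $\sum_{p,q}(-1)^{p+q}h^{p,q}(X_5)=\sum_n(-1)^n b_n=76$ forces $h^{1,1}(X_5)=h^{2,2}(X_5)=39$. This is the asserted diamond, which is manifestly Hodge-symmetric; and comparing $\sum_{p+q=n}h^{p,q}(X_5)$ with $b_n$ shows that the Hodge--de Rham spectral sequence of $X_5$ is non-degenerate, with one nonzero differential $d_1\colon H^1(\Omega^1)\to H^1(\Omega^2)$ together with its Serre dual $d_1\colon H^2(\Omega^1)\to H^2(\Omega^2)$, each of rank one, absorbing the defects $1,2,1$ in degrees $2,3,4$. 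Part (2) is then formal: $R\Gamma(\mc{X},\Omega^p)$ is a perfect complex over the discrete valuation ring $\mc{O}$, so $h^{p,q}(X_5)=h^{p,q}(X_K)+\dim(T^q_p/5)+\dim(T^{q+1}_p/5)$ with $T^q_p:=H^q(\mc{X},\Omega^p)_{\mathrm{tors}}$; comparing the two diamonds, the only $(p,q)$ with a strict inequality are $(1,1),(2,1),(1,2),(2,2)$, each by $1$, and solving this small linear system (the non-jumping rows force $T^0_p,T^1_p,T^3_p$ to be $5$-torsion free) shows that $H^2(\mc{X},\Omega^1)$ has nonzero $5$-torsion, and hence so does $H^2(\mc{X},\Omega^2)$, its Grothendieck--Serre dual over $\mc{O}$; all other Hodge groups are torsion free.

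For supersingularity, part (4): since $h^2(\mc{O}_{X_5})=0$, the Artin--Mazur functor $\widehat{\Phi}^3_{X_5}$ is pro-representable by a smooth one-dimensional formal group (its tangent space $H^3(\mc{O}_{X_5})$ being $1$-dimensional), so it is $\hat{\mb{G}}_a$ exactly when it is not $p$-divisible, i.e.\ when the rank-$2$ $F$-isocrystal $H^3_{\crys}(X_5/W)$ has no slope $<1$, i.e.\ when $X_5$ is non-ordinary. Using $h^{2,0}(X_5)=h^{0,2}(X_5)=h^{3,1}(X_5)=h^{1,3}(X_5)=0$, the Hodge polygons of $H^2_{\crys}$ and $H^4_{\crys}$ are the lines of slope $1$ and $2$, so by Mazur's inequality these are isoclinic of slope $1$ and $2$; hence, as $b_1=b_5=0$, all even-degree contributions to the Lefschetz trace formula except that of $H^0$ are divisible by $5$, and $\#X_5(\mb{F}_5)\equiv 1-\operatorname{tr}(\mathrm{Frob}_5\mid H^3_{\mathrm{et}})\pmod 5$, so it suffices to verify $\#X_5(\mb{F}_5)\equiv 1\pmod 5$ from the defining equations. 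Alternatively and more conceptually, one identifies $H^3_{\mathrm{et}}(X_5)$ with the reduction of the two-dimensional Galois representation of the rigid threefold $X_K$ and reads off non-ordinariness from the arithmetic of that representation (a supersingular elliptic factor in characteristic $5$ appearing in the construction), which in fact gives the stronger statement that $H^3_{\crys}(X_5/W)$ is isoclinic of slope $3/2$.

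The main obstacle is the very first step: extracting torsion-freeness --- indeed the full torsion structure --- of $H^\ast(X_{\mb{C}},\mb{Z})$ from the explicit resolution of singularities. Once that is in hand, part (1) is a formal consequence of $p$-adic comparison theorems; parts (2) and (3) reduce to Serre duality, an Euler-characteristic count, and the (already available) Cynk--van Straten deformation computation; and part (4) is a finite point count (or an appeal to the complex-multiplication structure underlying $X_K$). A secondary technical point is the geometric input that $X_5$ carries no global holomorphic $1$- or $2$-forms and has $h^1(\mc{O})=h^2(\mc{O})=0$, which rests on the explicit quotient / fiber-product presentation of $Y_5$.
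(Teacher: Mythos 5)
Your argument for part (1) rests on an implication that goes the wrong way. The integral comparison theorems you invoke (Caruso, Faltings, Bhatt--Morrow--Scholze) bound the torsion of \'etale cohomology by that of crystalline cohomology: torsion in $H^i_{\acute{e}t}$ forces at least as much torsion in $H^i_{\crys}$, and $\dim_{\mb{F}_p}H^i_{\acute{e}t}(X_{\bar K},\mb{F}_p)\leq \dim_k H^i_{\dR}(X_k)$. They do \emph{not} say that torsion-freeness of $H^*(X_{\mb{C}},\mb{Z})$ forces torsion-freeness of $H^*_{\crys}(X_5/W)$ --- indeed the existence of extra $p$-adic torsion invisible in the generic fiber is the whole point of Bhatt's question that this theorem answers. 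Moreover Caruso's \emph{equality} $\dim_kH^i_{\dR}(X_k)=\dim_{\mb{F}_p}H^i_{\acute{e}t}(X_{\bar K},\mb{F}_p)$ requires $ie<p-1$; here $e=2$ (the model lives over the completion of $\mb{Z}[\tfrac{-1+\sqrt 5}{2}]$ at $\sqrt 5$) and $p=5$, so it applies only for $i\leq 1$ and cannot touch $H^2$ or $H^3$. The paper therefore proves torsion-freeness of $H^3_{\crys}$ by a completely different mechanism: since $X_5$ does not lift to $W_2$, the conjugate spectral sequence differential $H^1(\Omega^1)\to H^3(\Omega^0)$ is cup product with the nonzero obstruction class in $H^2(T_X)\cong H^2(\Omega^2)$ and is forced to be nonzero by Serre duality (Lemma 5.3), so $\dim H^3_{\dR}(X_5)=2$; and torsion-freeness of $H^2_{\crys}$ follows from $H^1_{\dR}(X_5)=0$, which needs the vanishing $H^0(X_5,\Omega^1)=0$ discussed next.

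That vanishing is your second gap: you assert $h^{1,0}(X_5)=h^{2,0}(X_5)=0$ ``from the geometry,'' but rational singularities only give $H^q(X_5,\mc{O})\cong H^q(Y_5,\mc{O})$ and say nothing about $H^0(\Omega^1)$ or $H^0(\Omega^2)$; and a characteristic-$p$ Calabi--Yau threefold can perfectly well carry a global $1$-form --- the Godeaux threefold constructed earlier in this same paper does. These two vanishings are precisely the paper's new contribution to the Hodge diamond, and they require either the conjugate spectral sequence together with Caruso in degree $1$ and the topological input $H_1(X(\mb{C}),\mb{F}_5)=0$ (the Mayer--Vietoris computation of Section 7, which only establishes that $H_1$ is $2$-torsion, not that all of $H^*(X_{\mb{C}},\mb{Z})$ is torsion-free), or a deformation-theoretic argument chasing torsion in the integral Hodge groups. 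Finally, for part (4) you replace the paper's one-line appeal to Yobuko's theorem (finite Artin--Mazur height implies $W_2$-liftability, contradicting Cynk--van Straten) by a slope analysis plus an unexecuted point count; that route could in principle be completed, but as written it also leans on the unproved part (1). The bookkeeping you do for part (2) via universal coefficients is essentially the paper's and is fine once the genuine inputs are supplied.
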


This theorem answers a question posed by  Bhatt \cite[p.48 2(c)]{bhattaws}, who asked for an example of a smooth  proper  variety over $\mc{O}_{\mc{C}_p}$ whose \'etale and crystalline cohomologies are torsion free but whose Hodge cohomologies are not (as well as an example where Hodge cohomology has more torsion than de Rham cohomology).
\begin{rmk}
Note that, for both the generic and special fibers, the Hodge cohomology groups $H^1(\Omega^2), H^1(\Omega^1)$ (as well as $H^1(\mc{O})$ which is part of the definition of a strict CY) were already computed by Cynk-van Straten in \cite{cvs}: our contribution for part (3) of the Theorem \ref{thm:cvsintro} is the computations of $H^0(\Omega^1), H^0(\Omega^2)$. On the other hand, it would also be  interesting to figure out the length of torsion in the Hodge cohomology groups $H^2(\mc{X}, \Omega^1)$ and  $H^2(\mc{X}, \Omega^2)$, since we have not been able to do so.
\end{rmk}


\subsection{Questions  of Ekedahl and  Takayama}
Our techniques also allow us to answer some questions of Ekedahl and Takayama. Since these are addressed using the same technique, we describe them in the same section. 

As mentioned above, the Hirokado threefold $\mc{H}$ was the first example of a non-liftable Calabi-Yau threefold. This example was studied in detail by Ekedahl \cite{ekedahl} by identifying $\mc{H}$ as a Deligne-Lusztig variety, from which he computed most cohomology groups  of $\mc{H}$, including all Hodge cohomologies (see Theorem \ref{thm:ekedahl}). However, the issue of the precise form of the crystalline cohomology was left open: in the computations of loc.cit. this boils down to the issue of  the degeneration of the Hodge-de Rham spectral sequence, which could  not be  resolved.

On the other hand, Takayama asks \cite{takayamasurvey}[Section 4.1] whether the non-liftability of $Y_3$ can  again be explained by   the vanishing of $b_3$.

These two issues are resolved by the following 
\begin{cor}\label{cor:w2lift}
If a rigid Calabi-Yau threefold $X/k$ is not liftable to $W_2(k)$, then the Hodge-de Rham spectral sequence does not degenerate at the $E_1$-page, and $b_3=0$.  
\end{cor}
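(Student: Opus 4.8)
The plan is to prove the contrapositive-flavored statement by combining two classical facts with the $p$-adic input that underlies the rest of the paper. Suppose $X/k$ is a rigid Calabi-Yau threefold that is not liftable to $W_2(k)$. The key observation is that for a Calabi-Yau threefold the only potential obstruction to liftability to $W_2(k)$ is a class in $H^2(X, T_X)$, and since $X$ is Calabi-Yau we have $T_X \cong \Omega^2_X$ (using $\omega_X \cong \mathcal{O}_X$); by Serre duality $H^2(X,\Omega^2_X) \cong H^1(X,\Omega^1_X)^\vee$. Rigidity says precisely that $H^1(X, T_X) = H^1(X,\Omega^2_X) = 0$, hence by Serre duality $H^2(X,\Omega^1_X) = 0$ as well. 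So the first step is to assemble the relevant Hodge numbers and conclude that if the Hodge-de Rham spectral sequence were to degenerate at $E_1$, then $X$ would satisfy the hypotheses needed to apply a Deligne-Illusie type lifting criterion — the point being that liftability to $W_2(k)$ of the obstruction-free situation together with $E_1$-degeneration are tightly linked.

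**The main argument.**

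More precisely, I would argue as follows. First, by Serre duality and rigidity, $h^{2,1}(X) = h^{1,2}(X) = 0$, and the Calabi-Yau condition gives $h^{1,0} = h^{2,0} = h^{0,1} = h^{0,2} = 0$ while $h^{3,0} = h^{0,3} = 1$. The de Rham Betti number $b_3 = \dim H^3_{\mathrm{dR}}(X/k)$ is then constrained: if Hodge-de Rham degenerates at $E_1$, then $b_3 = h^{3,0} + h^{2,1} + h^{1,2} + h^{0,3} = 2$. On the other hand, I claim that under these circumstances $X$ \emph{does} lift to $W_2(k)$: the obstruction to lifting lives in $H^2(X, T_X) = H^2(X,\Omega^2_X) \cong H^1(X,\Omega^1_X)^\vee$, which need not vanish, so one cannot conclude liftability purely from obstruction theory. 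Instead, the mechanism is the theorem of Deligne-Illusie: a smooth proper variety of dimension $\le p$ that lifts to $W_2(k)$ has degenerate Hodge-de Rham spectral sequence; conversely, the failure of $E_1$-degeneration obstructs the existence of a $W_2$-lift. So the logical structure must be: non-liftability to $W_2(k)$ $\Rightarrow$ non-degeneration of Hodge-de Rham at $E_1$ (this is the contrapositive of Deligne-Illusie, valid since $\dim X = 3 \le p$ is \emph{not} automatic — so one must be careful, and I expect the paper restricts to $p \ge 3$ or handles small $p$ separately). Given non-degeneration, the dimension of $H^3_{\mathrm{dR}}$ drops strictly below the Hodge sum, forcing a nonzero differential into or out of the $H^3$ column; tracing which Hodge groups are involved (using that $h^{p,q}=0$ except $h^{0,0}=h^{3,3}=h^{3,0}=h^{0,3}=1$ and $h^{1,1}=h^{2,2}$ and $h^{2,1}=h^{1,2}=0$) and then invoking the crystalline comparison / $p$-adic input from earlier in the paper (the ``classical topology controls the special fiber'' principle, via Caruso or Faltings) to show $b_3 = \dim_{\overline{\mathbb{Q}}_l} H^3_{\text{ét}} = 0$.

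**Pinning down $b_3 = 0$.**

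For the $b_3 = 0$ conclusion, here is the cleaner route I would take: combine the (in)equalities $b_3 \le h^{3,0} + h^{2,1} + h^{1,2} + h^{0,3} = 2$ with the constraint coming from the differentials $d_1: H^q(\Omega^{p}) \to H^q(\Omega^{p+1})$. The only way the spectral sequence can fail to degenerate, given the vanishing of $h^{2,1}$ and $h^{1,2}$, is via a nonzero differential hitting $H^0(\Omega^3) = H^{3,0}$ or emanating from $H^3(\Omega^0) = H^{0,3}$ — but the source of a differential into $H^0(\Omega^3)$ would be $H^{-1}(\Omega^4) = 0$ on the $E_1$-page, so one must go to $d_2$ or higher, or reconsider which entries can interact; the correct bookkeeping shows the relevant differential is $d_3: H^0(\Omega^0) \to H^3(\Omega^{?})$ type or, more to the point, that any failure of degeneration in the total complex computing $H^3$ must reduce $b_3$ by an even number (since the Euler characteristic and the crystalline Frobenius structure force pairing), hence from $b_3 \le 2$ and $b_3 < 2$ we get $b_3 = 0$. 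I would then cite the earlier results (the crystalline torsion-freeness discussion and Artin comparison) to identify this de Rham $b_3$ with the $\ell$-adic $b_3$, completing the proof that $X$ is non-classical with $b_3 = 0$.

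**Main obstacle.**

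The hard part will be the precise bookkeeping of the Hodge-de Rham spectral sequence to show that non-degeneration forces $b_3$ all the way to $0$ rather than just $b_3 \le 1$: this requires knowing that the non-degeneration differential must involve the $H^{3,0}$ and $H^{0,3}$ classes \emph{together} (so that both drop out simultaneously), which in turn should follow from the compatibility of the spectral sequence with Serre/Poincaré duality, or alternatively from the fact that the single differential responsible is forced by the structure of the de Rham complex of a rigid CY threefold to be an isomorphism between one-dimensional spaces. A secondary subtlety is the small-characteristic caveat in Deligne-Illusie (dimension $3$ versus $p$); I expect the statement is implicitly for $p \ge 5$, or the paper notes that the $p = 2, 3$ cases of interest — the Hirokado and Schröer-type threefolds — are handled by the explicit computations referenced earlier rather than by this corollary.
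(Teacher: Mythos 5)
The central step of your argument does not go through as stated. You want to deduce that non-liftability to $W_2(k)$ forces non-degeneration of the Hodge--de Rham spectral sequence, and you attribute this to ``the contrapositive of Deligne--Illusie.'' But Deligne--Illusie proves: ($W_2$-liftable and $\dim X < p$) $\Rightarrow$ ($E_1$-degeneration). Its contrapositive is (non-degeneration) $\Rightarrow$ (non-liftable); what you need is the \emph{converse}, which is not a formal consequence and is false for general smooth proper varieties. So your proposal never actually establishes the first assertion of the corollary, and the downstream bookkeeping that is supposed to force $b_3<2$ has no starting point. The mechanism the paper uses is different and essential: by the methods of Deligne--Illusie, the differentials $H^i(\Omega^j)\to H^{i+2}(\Omega^{j-1})$ of the \emph{conjugate} spectral sequence are cup product with the obstruction class $\xi\in H^2(X,\mc{T}_X)$ (Lemma \ref{lem:diffcup}). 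For a Calabi--Yau threefold $\mc{T}_X\cong\Omega^2$, and the pairing $H^1(\Omega^1)\otimes H^2(\Omega^2)\to H^3(\Omega^3)\cong H^3(\mc{O})$ is perfect by Serre duality, so $\xi\neq 0$ forces the differential $H^1(\Omega^1)\to H^3(\Omega^0)$ to be non-zero (Lemma \ref{lem:w2lift}), and likewise $H^0(\Omega^3)\to H^2(\Omega^2)$ sends the generator to $\xi\neq 0$. This yields non-degeneration of the conjugate (hence of the Hodge--de Rham) spectral sequence \emph{and} kills both one-dimensional groups $H^{3,0}$ and $H^{0,3}$ simultaneously; since rigidity gives $h^{21}=h^{12}=0$, one gets $H^3_{\dR}(X)=0$ outright, with no parity argument needed.

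Two further problems. First, your ``the drop in $b_3$ must be even'' step is only gestured at; it can be repaired (the cup-product pairing on $H^3_{\dR}$ of a smooth proper threefold is alternating and non-degenerate, so $\dim H^3_{\dR}$ is even), but as written it is not a proof, and in any case it presupposes the non-degeneration you have not established. Second, you propose to pass from de Rham $b_3$ to $\ell$-adic $b_3$ via Caruso/Faltings or Artin comparison; both require a characteristic-zero lift, which is precisely what $X$ lacks here. The correct route is universal coefficients ($H^3_{\dR}=0$ forces $H^3_{\crys}=0$ by Nakayama) followed by Katz--Messing, which identifies the characteristic polynomials of Frobenius on rational crystalline and $\ell$-adic cohomology with no liftability hypothesis.
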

Indeed, Corollary \ref{cor:w2lift} implies that the Hodge-de Rham spectral sequence degenerates for $\mc{H}$, since $\mc{H}$ was shown to not lift to $W_2(k)$ by Ekedahl, thus resolving the first question above; on the other hand, Cynk-van Straten also showed that $Y_3$ does not lift to $W_2(k)$, and thus $b_3(Y_3)=0$, resolving the second issue stated above.
\subsection{Sketch of the arguments}
We comment briefly on the proofs of our results.

For the counterexample in Theorem \ref{thm:joshiintro}, $X$ is constructed by taking a free quotient of a quintic  hypersurface in $\mb{P}^4$ by a finite group of order $5$. Here we remark on why this is a plausible strategy: in fact we will consider the quotient $\mc{X}$ of a hypersurface $\mc{Y}$ over $\mb{Z}_5$ by a group scheme of order $5$, and let $X$ (respectively $Y$) denote the  special fiber of $\mc{X}$ (respectively $\mc{Y}$). In characterstic zero, the variety $\mc{X}[1/5]$ has $\pi_1=\mb{Z}/5$, and therefore by a result of Caruso 
we have   
\[
H^1_{dR}(X)\cong \mb{F}_5,
\]
where the left hand side is the algebraic de Rham cohomology of $X$. Thus, by the Hodge-de Rham spectral sequence, the groups $H^0(X, \Omega^1), \ H^1(X, \Omega^0)$ cannot both be zero. There are various choices of group schemes of order 5, and it turns out that choosing $\mu_5$ gives 
\[
H^0(X, \Omega^1)\neq 0, \ H^1(X, \mc{O})=0,
\]
which is what we wanted: the latter equality needed for the variety $X$ to be Calabi-Yau. For example, we can check that $H^1(X, \mc{O})=0$ as follows: since $X$ is a quotient of a hypersurface by $\mu_5$ we have  
\[
\Pic(X)\cong \mb{Z}/5,
\]
where this is an equality of group schemes, and hence 
\[
H^1(X, \mc{O})=T_e\Pic(Z)=0.
\]


\begin{rmk}
We could also have chosen to quotient by $\mb{Z}/5$ or $\alpha_5$. This example is reminiscent of the example of Enriques surfaces in characteristic 2, where the moduli space has two components according to whether $\Pic^{\tau} $ (the torsion part of the Picard group) is $\mu_2$ or  $\mb{Z}/2$, and intersecting along the locus where $\Pic^{\tau}\cong \alpha_2$.
\end{rmk}
\begin{rmk}
 As far as we are aware this  example was first considered by Aspinwall-Morrison \cite{aspinwallchiralrings} in their study of the \textit{discrete torsion} phenomenon in string theory, at least for the case of quotients of the Fermat hypersurface.
\end{rmk}

In order to  compute the  cohomologies required throughout this paper, we make frequent use of Caruso's theorem whenever we have characteristic zero lifts: the idea here is that Betti cohomologies of complex varieties are often easier to access, and using Caruso's result we can deduce information about the special fibers. This technique is perhaps most notable in the proof of Theorem \ref{thm:cvsintro}, and  seems to be the first application of integral $p$-adic Hodge theory to actual computations. To calculate Betti cohomlogies of the lifts we use classical tools such as  Mayer-Vietoris and the Serre spectral sequence.

\subsection{Some speculations}
We end in Section \ref{section:further} with some more possible applications of the main theme of this paper, namely the comparison between torsion in Betti cohomology of the generic fiber and cohomologies of the special fiber. 

We suggest possible examples showing that Hodge numbers are not derived invariants for threefolds in positive characteristic: this question was recently investigated in \cite{antieaubragg} and in characteristic zero it is a theorem of \cite{popaschnell} that the Hodge numbers of threefolds are in fact derived invariants. Our idea is that, since torsion in cohomology is not invariant (as shown by Addington \cite{addington}), and torsion contributes to Hodge cohomologies in the speical fiber, it seems likely that the Hodge cohomologies will difer in general. In \ref{section:hodgenumber} we give specific conjectural Calabi-Yau threefold examples; however, since the examples in question were ``constructed" physically by the use of  string dualities (cf. \cite{schulz}) and the author is not aware of rigorous mathematical constructions of these Calabi-Yau threefolds in the literature, we leave the details to future work.

In Section \ref{section:torsiondeformation} we consider the question of torsion in $H^3(X, \mb{Z})$ (the group $\Tors(H^3(X, \mb{Z}))$ is referred to as the Brauer group $\Brauer(X)$) for a Calabi-Yau threefold $X$. Again, a torsion class contributes to Hodge cohomologies of the reduction $\bmod \ p$, and if $X$ has a CY reduction then the contribution must be to $h^{21}$, which is also the dimension of the deformation space of $X$; in other words, classes in the Brauer group give extra $\bmod \ p$ directions for $X$  to deform in. 

We consider  some  particular examples of abelian surface fibered Calabi-Yau threefolds  with non-zero Brauer groups, and suggest how the extra deformations may be realized, analogous to the Moret-Bailly example \cite{moretbailly} of a family of mutually isogenous abelian surfaces in positive characteristic. 

\subsection{Acknowledgements} To be added.

\section{Godeaux Calabi-Yau threefolds}
In this section we introduce the Godeaux Calabi-Yau threefolds, using which we give counterexamples to Joshi's conjecture, as well as answer a question of van der Geer-Katsura. We also compute the Hodge diamond of these threefolds.

\subsection{Construction}
We make crucial use of some results of Kim-Reid on  the \textit{Tate-Oort group scheme} whose definition we now recall. Throughout this section we fix a prime $p$. For a group scheme $G$ over a base scheme $S$ and an element $f\in \mc{O}(S)$, we denote by $G[1/f]$ the group scheme over $S[1/f]$.

\begin{defn/thm}[Section 3 of \cite{reid}]\label{defn/thm:to}
Let $B$ denote the ring $\mb{Z}[S,t]/(P)$ where $P\defeq St^{p-1}+p$. There is an order $p$ group scheme $\mb{TO}_p$ such that 
\begin{itemize}
    \item $\mb{TO}_p[1/t]$ is isomorphic to the multiplicative group scheme $\mu_p$;
    \item $\mb{TO}_p[1/S] $ is a form of the \'etale group scheme $\mb{Z}/p$;
    \item the fiber of $\mb{TO}_p$ over the point $\Spec (\mb{F}_p)=\Spec(B/(S,t))$ is isomorphic to $\alpha_p$.
\end{itemize}
Here as usual, over a base scheme in characteristic $p$,  $\alpha_p$ denotes the order $p$ group scheme which is the kernel of Frobenius on $\mb{G}_a$. We refer to $\mb{TO}_p$ as the Tate-Oort group scheme.
\end{defn/thm}

There is the following simple description of $\mb{TO}_p$ over $\bar{B}\defeq B\otimes_{\mb{Z}}\mb{F}_p\cong  \mb{F}_p[S,t]/(St)$. We make the following definitions:
\begin{align}
&\mb{G}_{\bar{B}}\defeq \begin{pmatrix} 1 & 0 \\
                                      x & 1+tx
                                      \end{pmatrix} \subset \GL(2, \bar{B}),\\
&\overline{\mb{TO}}_p\defeq V(x^p=Sx)\subset \mb{G}_{\bar{B}} \label{eqn:top};
\end{align}
i.e. the second equation means we take the closed  subscheme of $\mb{G}_{\bar{B}}$ defined by the equation  $x^p=Sx$. One has to check  first of all that  $\mb{G}_{\bar{B}}$ is a group, and secondly that (\ref{eqn:top}) defines a subgroup: we refer the reader to  \cite[Proposition 3.1]{reid} for the details. Note that when $S$ is invertible the (\ref{eqn:top}) indeed defines an \'etale covering of $\Spec \bar{B}$; when $t$ invertible we may define  the map 
\begin{align}
    &\mb{G}_m\rightarrow \mb{G}_{\bar{B}},\\
    &\lambda \mapsto \begin{pmatrix}
                    1 & 0\\
                    \frac{\lambda -1}{t} & \lambda
                    \end{pmatrix}
\end{align} 
and observe that $\mb{TO}_p$ is precisely the image of $\mu_p\subset \mb{G}_m$. Finally when $t=S=0$ we have that $\mb{TO}_p$ is by definition the subgroup of 
\[
\mb{G}_a\cong \begin{pmatrix} 1 & 0 \\
                                      x & 1
                                      \end{pmatrix} \subset \GL(2, \bar{B}),
                                      \]
                                      satisfying $x^p=0$, which is also the definition of $\alpha_p$. To summarize this discussion, we  have written down an explicit group scheme $\bar{\mb{TO}}_p$ satisfying the three conditions specified by Definition/Theorem \ref{defn/thm:to} $\bmod \ p$.

We record here  the following theorem of Kim-Reid (see \cite[Section 6]{reid}, and also \cite{reidwebsite}).
\begin{thm}
There exists a quintic hypersurface $Y_5\subset \mb{P}^4_B$ with an action of $\mb{TO}_5$ such that the quotient $X_5$ is non-singular over the locus $S=t=0$. 
\end{thm}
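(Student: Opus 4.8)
The plan is to write down the quintic $Y_5$ explicitly and verify invariance and freeness of the action by hand. First I would recall from the Kim--Reid description of $\mathbb{TO}_5$ that it acts on $\mathbb{P}^4_B$ through the five-dimensional representation in which the generator acts on coordinates $z_0, \dots, z_4$ essentially as a ``Tate--Oort twist'' of the diagonal action $z_i \mapsto \zeta^i z_i$ of $\mu_5$: over the locus where $t$ is invertible this is literally the weight-$(0,1,2,3,4)$ action of $\mu_5$, over the locus where $S$ is invertible it is the regular representation of a form of $\mathbb{Z}/5$, and over $S=t=0$ it is the corresponding action of $\alpha_5$. I would then take $Y_5$ to be the hypersurface cut out by a general $\mathbb{TO}_5$-semi-invariant quintic form $F \in H^0(\mathbb{P}^4_B, \mathcal{O}(5))$ transforming by a fixed character; concretely, over $B[1/t]$ this is spanned by monomials $z_0^{a_0}\cdots z_4^{a_4}$ with $\sum a_i \equiv c \pmod 5$ for the appropriate $c$, and one checks this space of semi-invariants is a free $B$-module of the expected rank, so that a general member is defined over $B$ and has good reduction everywhere.

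The two things to check are: (a) the quotient $X_5 := Y_5/\mathbb{TO}_5$ exists as a scheme and is non-singular over the locus $S=t=0$; and (b) $Y_5$ itself is smooth there (so that $X_5$ is even a candidate Calabi--Yau). For (b) one argues that the general semi-invariant quintic is smooth by a Bertini-type argument applied fiberwise, the only delicate point being the special fiber over $S=t=0$ where the group is $\alpha_5$ and the char-$p$ Bertini theorem fails in general — here one must exhibit a specific $F$ and check the Jacobian criterion directly, which is exactly the content of the Kim--Reid computation. For (a), the key input is that over $S=t=0$ the action of $\alpha_5$ on $Y_5$ must be shown to be \emph{free} (as a group scheme action, i.e. the action morphism $\alpha_5 \times Y_5 \to Y_5 \times Y_5$ is a closed immersion); once freeness is known, the quotient is a smooth scheme of the same dimension by descent, since a free action of a finite flat group scheme on a smooth scheme yields a smooth quotient. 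Freeness of the $\alpha_5$-action amounts to checking that the ``vector field'' generating the $\alpha_5$-action has no zeros on $Y_5$, i.e. the derivation has no common zero with $F$ in $\mathbb{P}^4_{\mathbb{F}_5}$; this is again a transversality statement one verifies for the explicit $F$.

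I expect the main obstacle to be precisely this last point: checking that the $\alpha_5$-action on the special fiber is free, equivalently that the generating derivation is nowhere vanishing on $Y_5 \bmod (S,t)$. Over the open locus $t \ne 0$ or $S \ne 0$ freeness is automatic because $\mu_5$ and the étale form of $\mathbb{Z}/5$ act freely on a hypersurface avoiding the finitely many coordinate fixed points — and a general quintic semi-invariant does avoid them — so the real work is concentrated at the single bad point of $\Spec B$. The strategy there is to use the explicit matrix description $\overline{\mathbb{TO}}_5 = V(x^5 = Sx) \subset \mathbb{G}_{\bar B}$ from \eqref{eqn:top}, which at $S=t=0$ degenerates to $\alpha_5$ acting via the additive one-parameter subgroup, compute the induced infinitesimal action on $\mathbb{P}^4$, and check on the explicit $Y_5$ that the fixed locus is empty. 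This reduces to a finite computation with polynomials over $\mathbb{F}_5$, which is what Kim--Reid carry out; I would cite \cite[Section 6]{reid} for the verification rather than reproduce it, and simply record that a suitable $Y_5$ exists together with the non-singularity of $X_5$ over $S=t=0$.
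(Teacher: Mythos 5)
Your overall strategy has a genuine flaw at step (b). You propose to show that $Y_5$ itself is smooth over the locus $S=t=0$ and that the $\alpha_5$-action there is free, and then to deduce smoothness of the quotient $X_5$ by descent. But these two requirements are mutually incompatible: a free action of $\alpha_5$ (or of $\mu_5$) on a threefold is the same thing as a nowhere-vanishing $p$-closed global vector field, and a smooth projective threefold carrying a nowhere-vanishing vector field has vanishing top Chern class, hence vanishing Euler characteristic --- whereas a smooth quintic hypersurface in $\mb{P}^4$ has $\chi=-200$. So over $S=t=0$ (and indeed over the whole characteristic-$5$ fiber, where $\mb{TO}_5$ is infinitesimal) the covering $Y_5$ is necessarily \emph{singular} whenever the action is free; the paper makes exactly this observation in Section \ref{section:bbdecomposition}, where the singularity of $Y$ is the whole point of the example answering Patakfalvi--Zdanowicz. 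Consequently ``smooth total space $+$ free action $\Rightarrow$ smooth quotient'' cannot be the mechanism here, and a fiberwise Bertini argument aimed at smoothness of $Y_5$ over the bad point is doomed from the start. Your identification of the freeness of the $\alpha_5$-action (nonvanishing of the generating derivation on $Y_5$) as the delicate point is correct, but it must be decoupled from any smoothness claim for $Y_5$.

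The correct route --- the one carried out in Appendix \ref{appendix} for the $\mu_5$-specialization, and by Kim--Reid for the full Tate--Oort family --- runs in the opposite direction: one first forms the quotient of $\mb{P}^4_B$ by $\mb{TO}_5$, embeds it into a large projective space by the $26$-dimensional module of invariant quintics (checking that higher-degree invariants are generated by the quintic ones), verifies that the locus where the action fails to be free on $\mb{P}^4$ is a finite set of points, and then takes a generic hyperplane section, which avoids the images of those points and is smooth by Bertini; $Y_5$ is then recovered as the (generically singular) preimage quintic, on which the induced action is free. Note finally that the paper itself offers no proof of this statement: it is recorded as a theorem of Kim--Reid with a citation to \cite[Section 6]{reid}, so your instinct to defer the explicit computation to that reference is consistent with the paper, but the scaffolding you build around the citation needs to be reorganized as above.
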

\subsection{Joshi's conjecture}
In \cite{joshi} Joshi studies many interesting questions about $p$-adic cohomologies of varieties, and states the following conjectural criterion for  lifting Calabi-Yau threefolds:

\begin{conj}[Conjecture 7.7.1 of \cite{joshi}]\label{conj:joshi}
Let $X$ be a smooth proper Calabi-Yau threefold over $k$. Then $X$ lifts to characteristic zero if and only if 
\begin{enumerate}
    \item $H^0(X, \Omega^1)=0$, and 
    \item $X$ is \textit{classical}.
\end{enumerate}
\end{conj}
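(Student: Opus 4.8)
\emph{The plan is to refute this conjecture} rather than to prove it, by exhibiting the counterexample asserted in Theorem \ref{thm:joshiintro}: a smooth proper Calabi-Yau threefold $X/\mb{F}_5$ that lifts to characteristic zero yet has $H^0(X,\Omega^1)\neq 0$, so that condition (1) fails while $X$ is liftable (hence automatically classical, by Artin comparison and complex Hodge theory). The candidate is a Godeaux-type threefold $X = Y_5/\mu_5$. Start from the Kim--Reid quintic $Y_5\subset \mb{P}^4_B$ with its $\mb{TO}_5$-action whose quotient is nonsingular over $S=t=0$, and base change $B\to \mb{Z}_5$ along the locus where $t$ is a unit; then $S$ becomes a uniformizer and $\mb{TO}_5$ specializes to $\mu_5$ over all of $\Spec\mb{Z}_5$. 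Write $\mc{Y}/\mb{Z}_5$ for the resulting quintic, $\mc{X} = \mc{Y}/\mu_5$, and $X$ for its special fiber. The skeleton of the argument is: (a) $\mc{X}$ is a characteristic-zero lift of $X$; (b) $X$ is a strict Calabi-Yau threefold, the crucial point being $H^1(X,\mc{O})=0$; (c) the generic fiber $\mc{X}[1/5]$ has $\pi_1 = \mb{Z}/5$, so Caruso's theorem gives $H^1_{\dR}(X)\cong \mb{F}_5$; (d) the Hodge--de Rham spectral sequence then forces $H^0(X,\Omega^1)$ and $H^1(X,\mc{O})$ not to vanish simultaneously, and with (b) this yields $H^0(X,\Omega^1)\neq 0$.

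For step (b): since $Y_5\to X$ is a $\mu_5$-torsor over $S=t=0$, the canonical bundle $K_X$ pulls back to $K_{Y_5}$, which is trivial by adjunction for a quintic in $\mb{P}^4$, and $K_X$ itself is trivial because $\mu_5$ acts trivially on the one-dimensional space of global volume forms (equivalently, by flat descent). The vanishing $H^1(X,\mc{O})=0$ is exactly where the choice $\mu_5$ (rather than $\mb{Z}/5$ or $\alpha_5$, the other degenerations of $\mb{TO}_5$) is essential: for a $\mu_5$-quotient the Picard scheme acquires an \'etale $\mb{Z}/5$, and one checks $\Pic^\tau(X)\cong \mb{Z}/5$ is \'etale, so that $H^1(X,\mc{O}) = T_e\Pic(X) = 0$. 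One should also record $H^2(X,\mc{O})=0$ (via the Hodge diamond computation, or Serre duality with $H^1(X,\Omega^3)=0$) so $X$ is strict Calabi-Yau.

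Step (c) is the heart. Over $\overline{\mb{Q}}_5$ the cover $\mc{Y}[1/5]\to \mc{X}[1/5]$ is a connected \'etale $\mb{Z}/5$-cover (as $\mu_5\cong\mb{Z}/5$ there), and $\mc{Y}[1/5]$ is simply connected by the Lefschetz hyperplane theorem, so $\pi_1$ of the generic fiber is $\mb{Z}/5$ and its $\mb{Z}_5$-coefficient \'etale $H^1$ equals $\mb{Z}/5$; Caruso's comparison theorem (applicable since $\mb{Z}_5$ is absolutely unramified, comfortably inside the ramification range, and the cohomological degree is low) then gives $\dim_{\mb{F}_5}H^1_{\dR}(X) = 1$. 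Feeding this into the Hodge--de Rham spectral sequence of $X/\mb{F}_5$ forces $h^{1,0}(X)+h^{0,1}(X)\geq 1$, and since $h^{0,1}(X)=0$ by step (b) we get $H^0(X,\Omega^1)\neq 0$; combined with the liftability from step (a), $X$ is a liftable Calabi-Yau threefold violating condition (1), contradicting Conjecture \ref{conj:joshi}. The main obstacle I anticipate is not Caruso's theorem but its two structural inputs: checking that the $\mb{TO}_5$-quotient really specializes to a \emph{free} $\mu_5$-quotient with smooth $X$ and trivial $K_X$, and identifying $\Pic^\tau(X)$ as the \'etale group $\mb{Z}/5$ (not $\mu_5$ or $\alpha_5$), since it is precisely this that makes $H^1(X,\mc{O})$ vanish and pushes the de Rham class into $H^0(X,\Omega^1)$ — the feature distinguishing this component of the moduli space from those where instead $h^{0,1}$ jumps.
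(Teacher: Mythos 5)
Your proposal is correct and follows essentially the same route as the paper's refutation: the Kim--Reid $\mb{TO}_5$-quotient of a quintic, Caruso's comparison applied to $\pi_1(\mc{X}[1/5])=\mb{Z}/5$ to get $\dim H^1_{\dR}(X)=1$, and the identification $\Pic^\tau(X)\cong\mb{Z}/5$ (\'etale) to kill $h^{0,1}$ and force $H^0(X,\Omega^1)\neq 0$. If anything, your specialization (take $t$ a unit so $S$ becomes a uniformizer and the special-fiber group is $\mu_5$) is stated more carefully than the paper's ``generic invertible $S$,'' and your observation that one only needs $h^{1,0}+h^{0,1}\geq 1$ from the spectral sequence, rather than Deligne--Illusie degeneration, is a mild simplification.
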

In the statement of Conjecture \ref{conj:joshi} the adjective classical means one of several equivalent things, one of which is that the Betti number $b_3$ (the dimension of $H^3_{\acute{e}t}(X, \mb{Q}_l)$ for any $l\neq p$) is non-vanishing; we will take this as our definition in what follows. Note that if $X$ admits a lift  to characteristic zero $\tilde{X}$  then it is automatically classical: indeed, by Artin's comparison theorem 
\[
H^3_{\acute{e}t}(X, \bar{\mb{Q}}_l)\cong H^3(\tilde{X}(\mb{C}), \mb{Q})\otimes \bar{\mb{Q}}_l,
\]
where the right hand side denotes Betti cohomology of the topological space $\tilde{X}(\mb{C})$, and is non-zero by Hodge theory.

\begin{thm}\label{thm:joshi}
There exists a Calabi-Yau threefold $X/k$ with a lift to to characteristic zero and such that 
\[
H^0(X, \Omega^1)\cong k.
\]

\end{thm}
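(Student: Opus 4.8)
The plan is to construct $X$ as the special fiber at $S=t=0$ of the Kim--Reid quotient $\mc{X}_5 = Y_5/\mb{TO}_5$ over $B$, base-changed to a suitable DVR, and to track how the torsion in $\pi_1$ of the characteristic zero fiber forces $H^0(X,\Omega^1)$ to be nonzero. Concretely, I would choose a map $\Spec \mb{Z}_5 \to \Spec B$ hitting the ``$\mu_5$'' locus: send $t$ to a $5$-adic unit and $S$ to $5/t^{p-1}$, so that over the generic fiber $\mb{TO}_5 \cong \mu_5$ and over the special fiber $\mb{TO}_5 \cong \alpha_5$ (since $t \equiv 0$ would be wrong --- one must instead arrange $S$ to be the uniformizer so the special fiber sits at $S=t=0$; the correct choice is a ramified extension of $\mb{Z}_5$ in which both $S$ and $t$ specialize to $0$, e.g. $\mc{O} = \mb{Z}_5[\pi]$ with $\pi^{p-1}$ related to $5$). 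Call $\mc{X}$ the resulting quotient threefold over $\mc{O}$, with generic fiber $\mc{X}_\eta$ and special fiber $X$; by the Kim--Reid theorem $X$ is smooth, and it is Calabi-Yau because taking a quotient by a height-one-or-\'etale order $p$ group scheme preserves triviality of the canonical bundle and one checks $H^1(X,\mc{O}_X)=0$ via $\Pic(X) \cong \mb{Z}/5$ as group schemes (as sketched in the introduction: $\mc{X}_\eta$ has $\pi_1 = \mb{Z}/5$, $\Pic^\tau$ propagates, and $H^1(X,\mc{O}_X) = T_e\Pic(X) = T_e(\mb{Z}/5) = 0$).

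Next I would identify $H^1_{\dR}(X)$. The characteristic zero fiber $\mc{X}_\eta$ (or rather its base change to $\mb{C}$) is a free $\mb{Z}/5$ quotient of a smooth quintic threefold, hence simply connected upstairs, so $\pi_1(\mc{X}_\eta) = \mb{Z}/5$ and $H^1(\mc{X}_\eta^{\mathrm{an}},\mb{Z}) = 0$ while $H^2(\mc{X}_\eta^{\mathrm{an}},\mb{Z})_{\Tors} = \mb{Z}/5$. By Caruso's theorem (integral $p$-adic Hodge theory, valid in the range $i < p-1$, which covers $i=1$ when $p=5$), the torsion in the $\mb{Z}_5$-\'etale cohomology of the generic fiber controls the de Rham cohomology of the special fiber: one gets $H^1_{\dR}(X/k) \cong k$ (dimension one, coming from the $\mb{Z}/5$-torsion one cohomological degree up, exactly as $H^1_{\dR}$ of a variety picks up $\Tors H^2_{\mathrm{\acute et}}$). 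Then the Hodge--de Rham spectral sequence gives $\dim_k H^0(X,\Omega^1) + \dim_k H^1(X,\mc{O}_X) \geq 1$, and since we have already shown $H^1(X,\mc{O}_X)=0$, we conclude $\dim_k H^0(X,\Omega^1) \geq 1$. For the reverse inequality I would again invoke Caruso (or simply that $b_1 = 0$ together with the structure of the HdR spectral sequence in low degrees) to pin $H^1_{\dR}(X)$ to be exactly one-dimensional, forcing $H^0(X,\Omega^1) \cong k$ exactly.

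The main obstacle, I expect, is not the cohomological bookkeeping but verifying that the Kim--Reid hypersurface $Y_5$ and its quotient genuinely have all the required properties \emph{integrally} over the chosen DVR: that $Y_5$ is smooth with free $\mb{TO}_5$-action over the relevant open of $\Spec B$, that the quotient $\mc{X} = Y_5/\mb{TO}_5$ exists as a smooth proper scheme over $\mc{O}$ (quotients by finite flat group schemes of hypersurfaces need care when the action degenerates), and that the specialization $S=t=0$ really is attained by a $\mb{Z}_5$-point of $B$ after a ramified base change --- i.e. that the three-way degeneration picture ($\mu_5$ generically, $\alpha_5$ at the closed point) is realized by an actual arithmetic family and not just formally. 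A secondary subtlety is confirming that the canonical bundle of the quotient is trivial in characteristic $5$: this is automatic in characteristic zero but in positive characteristic one must check the quotient map is \'etale in codimension one (which it is, since the $\mb{TO}_5$-action on $Y_5$ is free), so that $\omega_{\mc{X}}$ pulls back to $\omega_{Y_5}$ and descends trivially; I would cite the freeness statement in Kim--Reid for this.

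Finally I would remark that the choice of $\mu_5$ (rather than $\mb{Z}/5$ or $\alpha_5$) is what produces $H^0(X,\Omega^1) \neq 0$ with $H^1(X,\mc{O}_X) = 0$ rather than the other way around: the character group of $\Pic^\tau \cong \mu_5$ is \'etale $\mb{Z}/5$, and the Cartier-dual bookkeeping shows the surviving de Rham class lands in $H^0(\Omega^1)$; had we used $\mb{Z}/5$ the class would have landed in $H^1(\mc{O}_X)$ instead, giving a non-CY threefold. This is exactly the Enriques-surfaces-in-characteristic-$2$ analogy mentioned above, and it is why the $\mu_5$ component of the moduli space is the right place to look.
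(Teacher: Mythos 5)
Your overall strategy is the paper's: realize $X$ as the special fibre of a Tate--Oort quotient of a quintic, use the Caruso--Faltings comparison to transport the $\mb{Z}/5$ in $\pi_1$ of the characteristic-zero fibre into $H^1_{\dR}(X)\cong k$, and kill $h^{01}$ via $\Pic^{\tau}(X)\cong\mb{Z}/5$. But your specialization step contains an error that would sink the argument if followed. Your first instinct --- take $t$ a $5$-adic unit and $S=-5/t^{4}$, so that the special fibre lies in the $\mu_5$-locus --- is the correct one, and in fact it is forced: on any $W(k)$-point of $B=\mb{Z}[S,t]/(St^{4}+5)$ the valuations satisfy $v(S)+4v(t)=1$ with $v(S),v(t)\ge 0$, so $t$ is automatically a unit and $v(S)=1$. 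Your parenthetical ``correction,'' steering instead towards a ramified point where both $S$ and $t$ specialize to $0$, lands you in the $\alpha_5$-locus. There the covering $Y\to X$ is an $\alpha_5$-torsor, so $\Pic^{\tau}(X)\cong\alpha_5^{\vee}\cong\alpha_5$, which has nontrivial tangent space; hence $h^{01}(X)=1$, the quotient fails the Calabi--Yau condition $H^1(X,\mc{O})=0$, and the relation $h^{10}+h^{01}=1$ now forces $h^{10}=0$ --- the opposite of what you want. (For the same reason, the assertion in your last paragraph that $\Pic^{\tau}\cong\mu_5$ is backwards: when the cover is a $\mu_5$-torsor one has $\Pic^{\tau}(X)\cong\mu_5^{\vee}\cong\mb{Z}/5$, which is \'etale, and that is exactly why $T_e\Pic(X)=0$.) Delete the ``correction'' and keep the $\mu_5$ specialization; smoothness at such a point follows from Kim--Reid's non-singularity over $S=t=0$ together with openness of smoothness, as you note.

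A secondary gap: to get $H^0(X,\Omega^1)$ of dimension exactly one, rather than merely nonzero, you need $h^{10}+h^{01}=\dim_k H^1_{\dR}(X)=1$ on the nose. The ``structure of the Hodge--de Rham spectral sequence in low degrees'' only identifies $E_\infty^{1,0}$ with $\ker\bigl(H^0(\Omega^1)\to H^0(\Omega^2)\bigr)$, so a priori $h^{10}$ could exceed $\dim E_\infty^{1,0}$ if that differential were nonzero. The clean fix, which the paper uses, is Deligne--Illusie: $X$ lifts to $W_2(k)$ and $\dim X=3<5$, so the spectral sequence degenerates at $E_1$ and the count is exact.
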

\begin{rmk}
Note that this only gives a counterexample for one direction of the conjecture. It would be very interesting to investigate the opposite direction, namely the existence of a lifting given conditions (1) and (2).
\end{rmk}

\begin{proof}[Proof of Theorem \ref{thm:joshi}]
Over the ring $B[1/S]$, the group scheme $\mb{TO}_5$ becomes the \'etale group scheme $\mb{Z}/5$, and by the non-singularity of the fiber of $X_5$ over $S=t=0$, we have that for a generic invertible $S$, the fiber of $X_5$ is also non-singular. Pick such a generic choice of $S$ and let  $\tilde{X}=X$ be the  corresponding fiber of $X_5$; also denote by  $X$ the  special fiber of $\tilde{X}$. Note that $X$ corresponds to some smooth point $x\in \Spec(B)(k)$ and we may assume $\tilde{X}$ corresponds to a $W(k)$-point of $\Spec(B)$.

Note that in characteristic zero we have 
\[
H_1(\tilde{X}(\mb{C}), \mb{Z})=\mb{Z}/5,
\]
since $\tilde{X}(\mb{C})$  arises as a free quotient of a hypersurface by $\mb{Z}/5$, and therefore by the universal coefficients theorem 
\[
H^1(\tilde{X}(\mb{C}), \mb{Z}/5)=\mb{Z}/5.
\]
Also by Deligne-Illusie's theorem the Hodge-de Rham spectral sequence degenerates for the special fiber $X$: indeed,  $X$ lifts to $W(k)$  $\dim X=3<4=5-1$. Also, by Theorem \ref{thm:carusofaltings} we have 
\[
\dim_kH^1_{\dR}(X_k)=\dim_{\mb{F}_5}H^1(\tilde{X}(\mb{C}), \mb{Z}/5)=1
\]
and therefore $h^{10}(X)+h^{01}(X)=1$. Now since $h^{01}(X)=\dim T_e(\Pic(X))=0$, we conclude that $h^{10}\neq 0$, and hence 
\[
H^0(X, \Omega^1)\cong k,
\]
as required.
\end{proof}
\begin{rmk}
It is also possible to explicitly construct a non-vanishing section in $H^0(X, \Omega^1)$, as follows. Note that  we have the covering
\[
Y\rightarrow X,
\]
which is a $\mu_5$-cover by construction, and hence inseparable. By the same argument as in \cite[Proposition 0.1.2]{cossecdolgachev}, we may produce a section in $H^0(X, \Omega^1)$, whose vanishing locus is precisely the image of the singular locus of $Y$. We review the construction here: we have that 
\[
\pi_*\mc{O}_Y\cong \mc{O}_X\oplus \bigoplus_{i=1}^4\mc{L}^{\otimes i}
\]
for some line bundle $\mc{L}$ on $X$. Now since the map $\pi$ is purely inseparable, $Y$ is locally given by the equation
\[
y^5=b
\]
for some $b\in \Gamma(\mc{L}^{\otimes 5})$. Now $\mc{L}^{\otimes 5}\cong \mc{O}$, and one may check that $db \in \Gamma(\Omega^1_X)$ glue together to give a global section. See also  the discussion in \cite[Section 6.2.2]{reid} for the construction of this section.
\end{rmk}
\subsection{A question of van der Geer-Katsura}
In their study of heights of Calabi-Yau threefolds, van der Geer-Katsura asks the following \cite[Section 7]{vandergeerkatsura}
\begin{question}[van der Geer-Katsura]
Can a Calabi-Yau variety of dimension 3 in positive characteristic have non-zero regular 1-forms or regular 2-forms?
\end{question}
We observe that Theorem \ref{thm:joshi} gives a positive answer to the first part of this question. It would be interesting to figure out the answer to the second question, namely whether we can have $H^0(X, \Omega^2)\neq 0$. Note that even though the Godeaux Calabi-Yau threefold has an extra class in $H^2_{\dR}$, the class lands in $H^1(\Omega^1)$ by the following 
\begin{prop}\label{prop:global2forms}
For the Godeaux Calabi-Yau threefolds, we have $H^0(X, \Omega^2)=0$.
\end{prop}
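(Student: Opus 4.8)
The plan is to compute $H^0(X,\Omega^2)$ by comparing it with the corresponding Hodge cohomology of the characteristic-zero lift $\tilde X$ via Caruso's theorem (Theorem \ref{thm:carusofaltings}), exactly as in the proof of Theorem \ref{thm:joshi}. First I would recall that $X$ lifts to $\tilde X$ over $W(k)$ and that, since $\dim X = 3 < p-1 = 4$, the Hodge--de Rham spectral sequence of $X$ degenerates at $E_1$ by Deligne--Illusie. Hence $\dim_k H^2_{\dR}(X) = h^{20}(X) + h^{11}(X) + h^{02}(X)$, where $h^{ij}(X) = \dim_k H^i(X,\Omega^j)$. By Caruso's comparison, $\dim_k H^2_{\dR}(X) = \dim_{\mathbb{F}_5} H^2(\tilde X(\mathbb{C}), \mathbb{F}_5)$, so everything reduces to understanding the mod-$5$ Betti cohomology of the complex lift, which is a free quotient of a smooth quintic threefold $Y$ by $\mathbb{Z}/5$.

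Next I would determine $H^2(\tilde X(\mathbb{C}),\mathbb{Z})$ and its $5$-torsion. Since $\tilde X = Y/(\mathbb{Z}/5)$ with $Y$ simply connected (a smooth quintic in $\mathbb{P}^4$), we have $\pi_1(\tilde X) = \mathbb{Z}/5$, hence $H^1(\tilde X,\mathbb{Z}) = 0$ and $H^2(\tilde X,\mathbb{Z})$ has torsion subgroup $H^2(\mathbb{Z}/5,\mathbb{Z}) = \mathbb{Z}/5$ (coming from $H_1 = \mathbb{Z}/5$ via universal coefficients). The free part of $H^2(\tilde X,\mathbb{Z})$ has rank $h^{11}(\tilde X)$, which for this Godeaux-type quintic quotient is $1$ (the hyperplane class survives; Cynk--van Straten / Reid-type computations give $b_2 = 1$). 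Then universal coefficients gives $\dim_{\mathbb{F}_5} H^2(\tilde X(\mathbb{C}),\mathbb{F}_5) = \mathrm{rk}\, H^2 + \dim_{\mathbb{F}_5}\mathrm{Tors}(H^2)[5] + \dim_{\mathbb{F}_5}\mathrm{Tors}(H^3)[5]$; I would need to also pin down the $5$-torsion in $H^3(\tilde X,\mathbb{Z})$, which for these rigid CY quotients should vanish (or be controlled), and conclude $\dim_k H^2_{\dR}(X) = 2$.

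Finally, combining $h^{20}(X) + h^{11}(X) + h^{02}(X) = 2$ with $h^{11}(X) = 1$ (already known for the special fiber by the line-bundle/Picard argument, or by flatness from the lift) forces $h^{20}(X) + h^{02}(X) = 1$; and since $H^0(X,\Omega^2)$ — i.e. $h^{20}$ — cannot be nonzero while $H^0(X,\Omega^1) \cong k$ already accounts for the unique ``extra'' class (here one uses that $h^{02}(X) = \dim H^2(X,\mathcal{O}) = h^{01}(X)$ by Serre duality on the CY threefold, and $h^{01} = 1$ from $H^0(X,\Omega^1) \cong k$ together with the degeneration, so $h^{02} = 1$ and $h^{20} = 0$), we get $H^0(X,\Omega^2) = 0$. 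I expect the main obstacle to be the careful bookkeeping of torsion: verifying $\mathrm{Tors}(H^3(\tilde X,\mathbb{Z}))$ has no $5$-part (so that $H^2$ mod $5$ has exactly dimension $2$) and confirming $b_2(\tilde X) = 1$, since a miscount there would change the final Hodge numbers. This should follow from the explicit description of $Y_5$ and the Cartan--Leray spectral sequence for $Y \to \tilde X$, but it is the step requiring genuine input rather than formal manipulation.
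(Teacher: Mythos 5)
Your overall strategy --- compute $\dim_k H^2_{\dR}(X)=2$ via Caruso and then apportion it among $h^{20}+h^{11}+h^{02}$ using Hodge--de Rham degeneration --- is viable, and is essentially how the paper later computes the full Hodge diamond of the special fiber. (The paper's own proof of this proposition is different: it restricts to a classical Godeaux surface $S$ arising as a hyperplane section, uses $h^{20}(S)=0$ from Lang's results on classical Godeaux surfaces, and deduces injectivity of $H^2_{\dR}(X)\to H^2_{\dR}(S)$ from the Lefschetz hyperplane theorem in characteristic zero plus Caruso.) Your preliminary steps are fine: $b_2(\tilde X)=1$, $\mathrm{Tors}\,H^2(\tilde X,\mathbb{Z})\cong\mathbb{Z}/5$, and the vanishing of $5$-torsion in $H^3(\tilde X,\mathbb{Z})$ (which the paper establishes with the Serre spectral sequence for $\tilde X\to K(\mathbb{Z}/5,1)$, exactly the input you flag as needing genuine work) together give $\dim_{\mathbb{F}_5}H^2(\tilde X,\mathbb{F}_5)=2$ and hence $\dim_k H^2_{\dR}(X)=2$.

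The endgame, however, contains two errors that happen to cancel. First, $h^{11}(X)=1$ is false for the special fiber: semicontinuity from the lift only gives $h^{11}(X)\ge 1$, and in fact $h^{11}(X)=2$, precisely because $H^0(X,\Omega^1)\cong k$ together with $H^0(\mathcal{X},\Omega^1)=0$ for the integral model (an $H^0$ of a flat sheaf over a DVR is torsion free, and the generic fiber has $h^{10}=0$) forces nontrivial torsion in $H^1(\mathcal{X},\Omega^1)$ by universal coefficients, which contributes an extra class to $H^1(X,\Omega^1)$ on top of the rank-one free part. Second, your claim $h^{01}=1$ invokes Hodge symmetry ($h^{01}=h^{10}$), which fails in characteristic $p$ and fails here: $h^{01}=\dim H^1(X,\mathcal{O})=\dim T_e\Pic(X)=0$ because $\Pic^{\tau}(X)\cong\mathbb{Z}/5$ is \'etale, whence $h^{02}=h^{01}=0$ by Serre duality, not $1$. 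Your two incorrect values ($h^{11}=1$, $h^{02}=1$) sum to the same total as the correct ones ($h^{11}=2$, $h^{02}=0$), so you land on $h^{20}=0$ by accident; had only one of the two slips occurred you would have concluded $h^{20}=1$ or $h^{20}=-1$. The corrected version of your argument --- $h^{02}=0$ from the Picard computation plus Serre duality, and $h^{11}\ge 2$ from the torsion argument above --- does give $h^{20}\le 2-2-0=0$ and is a valid alternative proof, parallel to the paper's Hodge-diamond corollary rather than to its proof of this proposition.
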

\begin{proof}
Suppose not, i.e that $H^0(X, \Omega^2)\neq 0$. Note that the Godeaux surface  $S$ appears as a hyperplane section of $X$, and it is classical in the sense that, just like $X$, we have 
\[
\Pic^{\tau}(S)\cong \mb{Z}/5.
\]
By standard facts about classical surfaces \cite{lang}, we have $h^{20}(S)=0$, and therefore 
\[
H^0(S, \Omega^2_S)=0
\]
by Serre duality. Also (by the construction of classical Godeaux surfaces in \cite{lang}, as well as the construction of Godeaux Calabi-Yau threefolds in  Appendix \ref{appendix},  both $X$ and $S$ lift to characteristic zero: let us denote these lifts by $\tilde{X}$, $\tilde{S}$ respectively. Now  the  Lefschetz hyperplane theorem implies that the induced map on  Betti cohomology groups
\[
H^2(\tilde{X}, \mb{Z})\rightarrow H^2(\tilde{S}, \mb{Z})
\]
is injective. Then  by Caruso \cite[Th\'eor\`eme 1.1]{caruso} the map
\[
H^2_{\dR}(X)\rightarrow H^2_{\dR}(S)
\]
is injective also, since the \'etale cohomologies are obtained by applying Breuil's functor $T_{st\star}$. This is a contradiction since $H^0(S, \Omega^2_S)=0$, and therefore we have $H^0(X, \Omega^2_X)=0$, as claimed.
\end{proof}

\subsection{Beauville-Bogomolov decomposition}\label{section:bbdecomposition}

In an interesting recent paper\cite{pz}, Patakfalvi-Zdanowicz considered the problem of extending the Beauville-Bogomolov decomposition, which is a powerful tool in studying $K$-trivial varieties over the complex numbers,  to positive characteristic. Along the way, they posed the following Question, which we will answer in the present work.
\begin{question}[Question 13.6 \cite{pz}]\label{question:pz}
Let $k$ be an algebraically closed field of characteristic $p>0$. Does there exist a finite $\mu_p$-quotient $Y\rightarrow X$ such that $Y$ is a singular projective Gorenstein $K$-trivial variety and $X$ is a smooth, weakly ordinary, projective $K$-trivial variety over $k$ with $\hat{q}(X)=0$?
\end{question}
Here $\hat{q}$ denotes the \textit{augmented irregularity}, namely
\[
\hat{q}(X)\defeq \rm{max}\{ \dim \rm{Alb}_{X'}|X'\rightarrow X \ \text{is finite \'etale}\},
\]
where for a variety $X'$,  $\rm{Alb}_{X'}$ denotes its Albanese variety. 
\begin{rmk}
The interest in the question above is that a positive answer allows one to construct many $K$-trivial varieties which, for example,  have Calabi-Yau and abelian varieties parts which are ``difficult'' to separate. For example, one may take a quotient of the form $(Y\times E)/\mu_p$ with $E$ being an ordinary elliptic curve, and $\mu_p$ acts on $Y$ as in Question~\ref{question:pz}, and $\mu_p$ acts on $E$ by translation via the inclusion $\mu_p\hookrightarrow E[p]$. For details see \cite{pz}.
\end{rmk}
We answer Question \ref{question:pz} in the positive:
\begin{thm}
The pair  $(Y, X)$, with $Y$ a generic choice of hypersurface $X$ in Appendix \ref{appendix}, $X\defeq Y/\mu_5$ the smooth quotient, satisfies the conditions in Question \ref{question:pz}. 
\end{thm}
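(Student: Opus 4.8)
The plan is to verify the four conditions of Question~\ref{question:pz} in turn for the pair $(Y,X)$, where $Y$ is a generic $\mu_5$-invariant quintic threefold over $k=\overline{\mb{F}}_5$ of the Kim--Reid type and $X=Y/\mu_5$. First I would recall that the Tate--Oort construction produces $Y_5\subset\mb{P}^4_B$ with a $\mb{TO}_5$-action whose quotient is smooth over the locus $S=t=0$; specializing to the characteristic-$5$ fiber where $\mb{TO}_5\cong\mu_5$ and choosing $Y$ generic among such invariant quintics, the quotient $X$ is smooth and projective, while $Y$ itself acquires singularities precisely at the fixed locus of the (infinitesimal) $\mu_5$-action --- so $Y$ is singular projective. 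That $Y$ is Gorenstein and $K$-trivial follows because $Y$ is a hypersurface in $\mb{P}^4$ (hence a local complete intersection, so Gorenstein, with $\omega_Y\cong\mc{O}_Y$ by adjunction); $K$-triviality of $X$ then follows since $\omega_X$ pulls back to $\omega_Y$ under the finite flat degree-$5$ map, i.e. $\pi^*\omega_X\cong\omega_Y\cong\mc{O}_Y$, together with the fact (used already in this section via $\Pic(X)\cong\mb{Z}/5$) that $X$ is Calabi--Yau, so $\omega_X\cong\mc{O}_X$.

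The two remaining conditions are weak ordinarity of $X$ and $\hat q(X)=0$. For $\hat q(X)=0$: I would argue that $\mathrm{Alb}_{X'}=0$ for every finite \'etale cover $X'\to X$. Since $X$ lifts to characteristic zero (Appendix~\ref{appendix}) with $\pi_1$ of the generic fiber equal to $\mb{Z}/5$, the \'etale fundamental group of $X$ is a quotient of (a group with) this structure; the relevant covers are dominated by the quintic $Y$-lift, which is simply connected (being a smooth hypersurface of dimension $\geq 2$ in $\mb{P}^n$ in characteristic zero, by Lefschetz), so every finite \'etale cover $X'$ of $X$ has $Y$-lift simply connected and with $h^{1,0}=0$ by Lefschetz again; hence $b_1(X')=0$ and $\mathrm{Alb}_{X'}=0$. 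One must be slightly careful to run this over the special fiber, but $b_1$ and the Albanese dimension can be controlled via the lift and Artin comparison, exactly as in the proof of Theorem~\ref{thm:joshi}. For weak ordinarity: since $X$ is a smooth Calabi--Yau threefold, weak ordinarity amounts to the Frobenius acting bijectively on $H^3(X,\mc{O}_X)$ (equivalently, the Artin--Mazur formal group $\hat{\Phi}^3_X$ being of height one, i.e. $X$ being $f$-ordinary), which for a quotient of a quintic one can read off from the action of Frobenius on $H^3(Y,\mc{O}_Y)^{\mu_5}$; for a generic such $Y$ the Hasse invariant is nonzero, so $X$ is ordinary, in particular weakly ordinary.

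I would organize the write-up as: (i) smoothness of $X$ and singularity of $Y$ from Kim--Reid; (ii) Gorenstein and $K$-triviality from the hypersurface/quotient adjunction and $\Pic(X)\cong\mb{Z}/5$; (iii) $\hat q(X)=0$ via the simply-connected quintic lift and Lefschetz; (iv) weak ordinarity via genericity of the Hasse invariant. The main obstacle I expect is step (iv): controlling ordinarity of the quotient $X$ rather than of $Y$, since the $\mu_5$-action is infinitesimal in characteristic $5$ and one must check that taking the $\mu_5$-fixed part of $H^3(Y,\mc{O}_Y)$ does not destroy the bijectivity of Frobenius; the cleanest route is probably to exhibit one explicit invariant quintic whose quotient is ordinary (an open condition) and then invoke genericity, and I would want to double-check that such an explicit example with smooth quotient in the Kim--Reid family is available, e.g. from \cite{reidwebsite}. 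A secondary subtlety is making precise that ``generic $S$'' simultaneously achieves smoothness of the quotient, liftability, and ordinarity --- but each is an open dense condition, so their intersection is again open dense.
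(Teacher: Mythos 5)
Your overall outline matches the paper's, but two steps need repair. First, your justification that $Y$ is singular does not work: you locate the singularities at the fixed locus of the infinitesimal $\mu_5$-action, but for the quotient $X$ to be smooth the action on $Y$ must be free, i.e.\ the associated vector field is nowhere vanishing, so there is no fixed locus on $Y$ to point to. The paper's argument is indirect: a generic invariant quintic carries a nowhere-vanishing global vector field, and if $Y$ were smooth this would force $\chi(Y)=0$, contradicting $\chi(Y)=-200$ for a quintic hypersurface in $\mb{P}^4$; hence $Y$ must be singular. As written, your proposal asserts the conclusion with a reason that is in tension with the smoothness of $X$.

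Second, the weak ordinarity step --- which you correctly flag as the main obstacle --- is left open in your write-up, and it is exactly the step the paper handles with a specific tool. The paper first shows the pullback $H^3(X,\mc{O}_X)\to H^3(Y,\mc{O}_Y)$ is an isomorphism: both groups are one-dimensional and the map is split injective because $\pi_*\mc{O}_Y\cong\mc{O}_X\oplus\bigoplus_{i=1}^4\mc{L}^{\otimes i}$; this disposes of your worry about passing to $\mu_5$-invariants. The explicit ordinary member of the family is then produced via Stienstra's formula (Theorem \ref{thm:stienstra} combined with Lemma \ref{lemma:frobenius}, which identifies the Frobenius action on the tangent space of the Artin--Mazur formal group with the coefficient $\beta_p$ of the logarithm): for the invariant quintic $F=X_0X_1X_2X_3X_4$ the Frobenius acts on $H^3(\mc{O})$ by the coefficient of $X_0^4X_1^4X_2^4X_3^4X_4^4$ in $F^4$, which equals $1$. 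Since nonvanishing of this coefficient is an open condition on the coefficients of $F$, the generic invariant quintic gives a weakly ordinary $X$. Your remaining points ($K$-triviality and Gorenstein-ness via adjunction, and $\hat q(X)=0$ via the simply connected quintic and $H^1(X,\mc{O})=0$) agree in substance with the paper, which notes in addition that $X$ is simply connected simply because the inseparable quotient map $Y\to X$ is a homeomorphism.
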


\begin{proof}
By Theorem \ref{thm:godeauxconstruct} the threefold $X$ is Calabi-Yau, so in particular $K$-trivial.  Note that the $\mu_5$-action is free for a generic choice of $Y$: that is, $Y$ has a nowhere vanishing vector field. Therefore $Y$ is singular, since otherwise the global vector field implies it  has vanishing Euler characteristic, which is not true since  a quintic hypersurface in $\mb{P}^4$ has Euler characteristic $-200$. On the other hand, $Y$ is certainly Gorenstein, being a hypersurface in $\mb{P}^4$. 

We now come to the augmented irregularity $\hat{q}(X)$. Note that  $X$ is (geometrically) simply connected, since it is homeomorphic to $Y$, which in turn is simply connected, since it is a hypersurface in $\mb{P}^4$. On the other hand, we have $H^1(X, \mc{O})=0$, and hence $\hat{q}(X)=0$, as required by Question \ref{question:pz}. Alternatively, for any $\ell$ we have 
\[
H^1_{\acute{e}t}(X, \mb{Q}_{\ell})=0
\]
since $X$ has a characteristic zero lift with fundamental group $\mb{Z}/5$, and therefore $\rm{Alb}_X$ is trivial. 

It remains to show that $X$ is weakly ordinary; in other words, it suffices to show that for a generic choice of  $X$, the action of Frobenius on $H^3(X, \mc{O})$ is invertible. Now  note  that the groups $H^3(Y, \mc{O})$, $H^3(X, \mc{O})$ are both one dimensional, and the pullback map 
\begin{equation}\label{eqn:pullback}
H^3(X,\mc{O}_X)\rightarrow H^3(Y, \mc{O}_Y)
\end{equation}
is an isomorphism: indeed, the map $\pi$ is finite and therefore $\pi_*$ is exact, and the above map is given by 
\[
H^3(X, \mc{O}_X)\rightarrow H^3(X, \pi_*\mc{O}_Y),
\]
and since 
\[
\pi_*\mc{O}_Y \cong \mc{O}_X\oplus \bigoplus_{i=1}^4\mc{L}^{\otimes i}
\]
for some line bundle $\mc{L}$ on $X$ (in fact, $\mc{L}$ corresponds to a  non-zero element of $\Pic(X)^{\tau}$, which gives rise to the covering $Y$), we have that the map (\ref{eqn:pullback}) is non-zero, and since both groups are one dimensional $k$-vector spaces, we have it being an isomorphism, as claimed. 

Now it suffices to check that Frobenius acts non-trivially  on the group $H^3(Y, \mc{O})$.

We recall that we have the $\mu_5$-action on $\mb{P}^4$ where $\zeta \in \mu_5$ acts via the map  
\[
(X_0: X_1:X_2:X_3:X_4)\mapsto  (X_0: \zeta X_1:\zeta^2 X_2:\zeta^3X_3:\zeta^4 X_4).
\]
The hypersurface $Y$ is defined by a quintic polynomial in the $X_i$'s which is invariant under this $\mu_5$-action. It suffices to exhibit a single such invariant quintic polynomial such that the Frobenius action on the corresponding group $H^3(Y, \mc{O})$ is non-trivial, since the condition of being weakly ordinary is an open one. There is the following simple formula for this action given by \cite{stienstra}: see Theorem \ref{thm:stienstra} below as well as Lemma \ref{lemma:frobenius}, and the fact that the tangent space to the formal group in Theorem \ref{thm:stienstra} is precisely $H^3(\mc{O})$. But now we can simply take the invariant quintic $X_0X_1X_2X_3X_4$, and it is trivial to check, using Theorem \ref{thm:stienstra}, that the Frobenius  action on $H^3(\mc{O})$ is non-trivial: indeed, by Theorem \ref{thm:stienstra}, the action of Frobenius is given by the coefficient $X_0^4X_1^4X_2^4X_3^4X_4^4$ in $(X_0X_1X_2X_3X_4)^4$, which is obviously non-zero, as required.

\end{proof}

\begin{thm}[{\cite[Theorem 1]{stienstra}}]\label{thm:stienstra}
For a hypersurface $Y\subset \mb{P}^N$ of degree $N+1$  defined by the equation $F(X_0, \cdots, X_N)=0$, there is a formal group law for $H^{N-1}(Y, \hat{\mb{G}}_{m,Y})$ whose logarithm $l(\tau)$ is given by
\[
l(\tau)=\sum_{m\geq 1}\beta_m\frac{\tau^m}{m},
\]
where 
\[
\beta_m=\text{coefficient of}  \ X_0^{m-1}\cdot \cdots \cdot X_N^{m-1} \ \text{in} \ F^{m-1}.
\]

\end{thm}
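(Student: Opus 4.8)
The plan is to identify $H^{N-1}(Y,\hat{\mb{G}}_{m,Y})$ with the Artin--Mazur formal group $\Phi^{N-1}_Y$ --- the functor sending an Artinian local $k$-algebra $A$ to $\ker(H^{N-1}_{\mathrm{et}}(Y_A,\mb{G}_m)\to H^{N-1}_{\mathrm{et}}(Y,\mb{G}_m))$ --- to check that it is a one-dimensional smooth formal group for a degree $N+1$ hypersurface, and then to compute its logarithm by an explicit Čech calculation on the standard affine cover. Since $\deg F=N+1$ we have $\omega_Y\cong\mc{O}_Y$, and from $0\to\mc{O}_{\mb{P}^N}(-N-1)\to\mc{O}_{\mb{P}^N}\to\mc{O}_Y\to 0$ together with $H^i(\mb{P}^N,\mc{O})=0$ for $0<i<N$ one gets $H^{N-1}(Y,\mc{O}_Y)\cong H^N(\mb{P}^N,\mc{O}(-N-1))\cong k$; by Artin--Mazur, $\Phi^{N-1}_Y$ is then pro-represented by $k[[T]]$ with tangent space $H^{N-1}(Y,\mc{O}_Y)$, so it carries a logarithm $\ell(T)=T+O(T^2)$, and the theorem is the identification of its coefficients.

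Concretely I would work with the cover $\mathfrak{U}=\{U_i=\{X_i\ne 0\}\}_{i=0}^N$, using that the cohomology of $\hat{\mb{G}}_m$ on the nilpotent thickenings $Y_A$ is computed by the Čech complex of principal units for $\mathfrak{U}$ (as $\hat{\mb{G}}_m$ is filtered by copies of $\mb{G}_a$, whose higher étale cohomology is Zariski cohomology of $\mc{O}$). Under the connecting isomorphism $\partial\colon H^{N-1}(Y,\mc{O}_Y)\xrightarrow{\sim}H^N(\mb{P}^N,\mc{O}(-N-1))$ the generator $\eta$ corresponds to the Laurent monomial $(X_0\cdots X_N)^{-1}$, and is represented by a Čech $(N-1)$-cochain $\widetilde\xi$ on $\mathfrak{U}$ with $\delta\widetilde\xi=F/(X_0\cdots X_N)$ in $\check C^N(\mathfrak{U},\mc{O}_{\mb{P}^N})$ (possible since $F/(X_0\cdots X_N)$ has degree $0$ and dies in $H^N(\mb{P}^N,\mc{O})$). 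Next I would build the universal formal-group element as a $\hat{\mb{G}}_m$-valued Čech $(N-1)$-cocycle $u(T)=1+T\,\widetilde\xi|_Y+O(T^2)$ on $Y_{k[[T]]}$: the naive cochain $1+T\widetilde\xi|_Y$ has multiplicative coboundary $1+T\cdot(F/(X_0\cdots X_N))|_Y+\cdots=1+O(T^2)$, and the successive obstructions to correcting it lie in $H^N(Y,\mc{O}_Y)=0$, so a genuine cocycle $u(T)$ exists, with derivative $\eta\ne 0$ at $T=0$. Applying the additive logarithm to $u(T)$ face by face yields a cocycle whose class in $T$-degree $m$ is $\ell_m\,\eta$; to compute $\ell_m$ I would apply $\partial$ --- lift the degree-$m$ part of $\log u(T)$ to $\mb{P}^N$, write its coboundary as $F\cdot\zeta_m$ with $\zeta_m$ a degree $(-N-1)$ Laurent cochain, and read off the $(X_0\cdots X_N)^{-1}$-coefficient --- and, unwinding the corrections in $u(T)$, this coefficient comes out to $\beta_m/m$ with $\beta_m$ the coefficient of $(X_0\cdots X_N)^{m-1}$ in $F^{m-1}$. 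This gives $\ell(T)=\sum_{m\ge 1}\beta_m T^m/m$, with $\beta_1=1$ matching $\ell(T)=T+O(T^2)$. Here $\beta_m$ is the value of the formal constant-term functional on $(F/(X_0\cdots X_N))^{m-1}$, so the assertion is in essence the comparison of this power series with Cartier's reconstruction of a formal group from its logarithm.

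The hard part is twofold. First, one must justify that the Čech/Zariski model on the thickenings $Y_A$ computes $\Phi^{N-1}_Y$ \emph{together with its group law}, not merely its $\mb{Q}$-linearization, so that $\ell$ genuinely is the logarithm of a formal group over $k$ (equivalently over $W(k)$) and the $\beta_m$ are assembled correctly; this is the crux of Stienstra's argument and is where one invokes the comparison with the (typical) curves / Cartier-module picture. Second is the explicit bookkeeping: choosing $\widetilde\xi$, tracking the corrections defining $u(T)$ and the signs in $\partial$, and verifying the identity "cohomology class of the degree-$m$ part of $\log u(T)$'' $=$ "constant Laurent coefficient of $(F/(X_0\cdots X_N))^{m-1}$''. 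A cleaner route that sidesteps most of the second point is to present $\Phi^{N-1}_Y$ via its module of $p$-typical curves: compute the Frobenius (Hasse--Witt) operator on $H^{N-1}(Y,\mc{O}_Y)$ --- multiplication by the constant Laurent coefficient of $(F/(X_0\cdots X_N))^{p-1}$, i.e.\ by $\beta_p$ --- and its iterates, then recover $\ell$ from the resulting $V$-divided-power series. This is close to Stienstra's original argument, makes the meaning of the $\beta_m$ most transparent, and in particular explains why ordinarity is governed by $\beta_p\bmod p$, as used above for $F=X_0X_1X_2X_3X_4$.
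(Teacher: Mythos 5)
The paper offers no proof of this statement: it is quoted directly from Stienstra's Theorem~1, so there is nothing in-house to compare your argument against. Judged on its own terms, your outline reproduces the strategy of Stienstra's original proof: identify $H^{N-1}(Y,\hat{\mb{G}}_{m,Y})$ with the Artin--Mazur functor $\Phi^{N-1}_Y$, use $0\to\mc{O}_{\mb{P}^N}(-N-1)\to\mc{O}_{\mb{P}^N}\to\mc{O}_Y\to 0$ to see that the tangent space $H^{N-1}(Y,\mc{O}_Y)$ is one-dimensional and generated by the image of $(X_0\cdots X_N)^{-1}$ under the connecting map, and then extract the coefficients $\beta_m$ as constant terms of $(F/(X_0\cdots X_N))^{m-1}$ via a \v{C}ech computation on the standard cover. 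Your closing suggestion --- to read off the structure from the Cartier module / $p$-typical curves rather than from the multiplicative cocycle $u(T)$ directly --- is in fact closer to how Stienstra organizes the computation, and it is also exactly the form in which the paper consumes the theorem (Lemma~\ref{lemma:frobenius} only needs $\beta_p\bmod\mf{p}$ acting on the tangent space).

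Two points deserve to be made explicit rather than left implicit. First, pro-representability of $\Phi^{N-1}_Y$ by a smooth one-dimensional formal group requires $H^{N-2}(Y,\mc{O}_Y)=0$ (Artin--Mazur's criterion), which does hold for a hypersurface but is a hypothesis you use without stating. Second, over a field of characteristic $p$ the ``logarithm'' $\sum\beta_m\tau^m/m$ is not literally a power series over the base; the theorem is really a statement over a $\mb{Z}$-flat base (here a mixed-characteristic DVR, as in Lemma~\ref{lemma:frobenius}), and the nontrivial content is that $l^{-1}(l(x)+l(y))$ has integral coefficients --- the functional-equation-lemma/Cartier-module step you correctly flag as the crux. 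As written your proposal is a sound plan rather than a complete proof: the two ``hard parts'' you name (integrality of the group law, and the sign/correction bookkeeping identifying the degree-$m$ class with the constant Laurent coefficient of $(F/(X_0\cdots X_N))^{m-1}$) are precisely where the substance of Stienstra's argument lies, but no step you propose is wrong.
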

The following lemma relates the $p$th coefficient of the logarithm of a formal group to the action of Frobenius on its tangent space, which is certainly well known but for which we have not been able to find an adequate reference.
\begin{lem}\label{lemma:frobenius}
Suppose we have a  discrete valuation ring $R$ with maximal ideal $\mf{p}$, and   a one-dimensional formal group $\pdiv$ over $R$  with logarithm 
\[
l(\tau)=\sum_{m\geq 1} \beta_m\frac{\tau^m}{m};
\]
the action  of Frobenius on the tangent space of $\pdiv\otimes R/\mf{p}$ is given by multiplication by $\beta_p \bmod \ \mf{p}$.
\end{lem}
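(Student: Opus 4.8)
The plan is to reduce the statement to a computation with the explicit formulas describing a one-dimensional $p$-divisible (or just formal) group in terms of its logarithm, and then to identify the resulting expression with the Frobenius on the tangent space via the theory of Dieudonné/Cartier modules. Concretely, recall that for a one-dimensional formal group $\pdiv$ over $R$ with logarithm $l(\tau)=\sum_{m\geq 1}\beta_m \tau^m/m$ (with $\beta_1=1$), the formal group law is $F(\tau_1,\tau_2)=l^{-1}(l(\tau_1)+l(\tau_2))$, which lies in $R[[\tau_1,\tau_2]]$ precisely when the $\beta_m$ satisfy the Hazewinkel-type integrality conditions. The Cartier module (or the Dieudonné module after base change to $R/\mf p$, assuming $R/\mf p$ perfect of characteristic $p$) is computed from the so-called functional equation: writing things in terms of the curve $\gamma(t)=l^{-1}(t)$, the operator $F$ (Frobenius, $\mathbf{f}$ in Cartier's notation) acts on the standard generator by an expression whose leading term modulo $\mf p$ is $\beta_p$.

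First I would set up the Cartier-theoretic framework: the tangent space of $\pdiv\otimes R/\mf p$ at the origin is canonically $\mf n/\mf n^2$-dual, a one-dimensional $R/\mf p$-vector space spanned by the class of $\tau$, and the Frobenius $F$ on the (covariant) Dieudonné module $M$ induces, after quotienting by $VM$, an $R/\mf p$-semilinear endomorphism of that one-dimensional space, i.e. multiplication by a scalar. Next I would write down the logarithm modulo $p$: from $l(\tau)=\tau+\beta_2\tau^2/2+\cdots$ one sees that $l(\tau)\equiv \tau + (\text{terms of degree} \geq p)\pmod{p\text{ up to lower-order denominators}}$, and the only term that survives the relevant reduction contributing to $F$ is the coefficient $\beta_p$ of $\tau^p/p$. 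The cleanest route is to use the standard fact (e.g. Hazewinkel, \emph{Formal Groups and Applications}, Chapter I, or Honda's theory) that for a $p$-typical logarithm $l(\tau)=\tau+\sum_{i\geq 1}b_i\tau^{p^i}/p^i$, the associated Dieudonné module has Frobenius acting by $F\cdot e = b_1 e + (\text{Verschiebung terms})$, and then to observe that any logarithm can be $p$-typified without changing $\beta_p \bmod \mf p$, since $p$-typification only modifies $\beta_m$ for $m$ not a power of $p$ and adds $p$-divisible corrections to $\beta_p$ itself.

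Then I would conclude: since $b_1 \equiv \beta_p \pmod{\mf p}$ and $VM \subset \mf p M + (\text{higher filtration})$ maps to zero in the tangent space, the induced action of $F$ on $T_e(\pdiv\otimes R/\mf p)$ is multiplication by $\beta_p \bmod \mf p$, which is exactly the claim. I expect the main obstacle to be bookkeeping: matching conventions between the ``logarithm'' normalization used in Theorem \ref{thm:stienstra} (coefficients $\beta_m/m$, covariant vs contravariant Dieudonné theory, Frobenius vs Verschiebung) and the standard references, and in particular making the $p$-typification step rigorous enough that one sees $\beta_p$ is genuinely unchanged modulo $\mf p$ rather than merely modulo $p$. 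A secondary subtlety is that $R$ is only assumed to be a DVR, not necessarily with perfect residue field; if needed one can base change to the completion of the maximal unramified extension, since the Frobenius action on the tangent space only depends on $\pdiv\otimes R/\mf p$ and is insensitive to this faithfully flat extension. Given these reductions, the remaining verification is a short and essentially formal computation with power series.
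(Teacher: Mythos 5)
Your proof is correct, but it takes a genuinely different route from the paper's. The paper works directly with the Cartier module $\mc{C}\pdiv\cong tR\llbracket t\rrbracket$ and the explicit formula $F(\gamma(t))=\gamma(\zeta t^{1/p})\boxplus\cdots\boxplus\gamma(\zeta^pt^{1/p})$: applying the logarithm and summing the roots of unity gives $l(F\tau)=\sum_{n\geq 1}\beta_{np}\tau^n/n$ in one line, so $F$ visibly acts by $\beta_p$ on the tangent line over the generic fiber, and one reduces mod $\mf{p}$ using Frobenius-equivariance of $l$. You instead $p$-typify and then quote the standard description of the Dieudonn\'e module of a $p$-typical formal group, reading off the Frobenius on $M/VM$. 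Both are sound; the paper's computation is shorter and self-contained (no appeal to Hazewinkel/Honda, no convention-matching between covariant and contravariant theories), while your route makes the statement a corollary of citable structure theory and correctly flags the imperfect-residue-field issue, which the paper does not discuss. One small simplification available to you: $p$-typification of the standard curve leaves the coefficients $\beta_{p^i}$ \emph{exactly} unchanged (the M\"obius-sum projector kills only the coefficients of $\tau^k$ with $k$ not a $p$-power), so your worry about ``$p$-divisible corrections to $\beta_p$'' evaporates and no mod-$\mf{p}$ bookkeeping is needed at that step.
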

\begin{proof}
Let $F$ denote the Frobenius operator. Consider the Cartier module 
\[
\mc{C}\pdiv\defeq \lim \pdiv(tR[t]/t^n),
\]
which may be identified with $tR\llbracket t\rrbracket$.

Furthermore on $\mc{C} \pdiv$ We have the Frobenius operator $F$ (we only consider the Frobenius  at $p$) acting  by  the formula 
\[
F(\gamma(t))=\gamma(\zeta t^{1/p})\boxplus \gamma(\zeta^2t^{1/p})\boxplus \cdots \boxplus \gamma(\zeta^p t^{1/p}),  
\]
where $\zeta$ denotes a primitive $p$th root of unity,  the symbol $\boxplus$ denotes the addition in the formal group $\pdiv$, and $\gamma(t)$ denotes any element of $tR\llbracket t\rrbracket$ which we have identified with $\mc{C}\pdiv$ (c.f.\cite[p.1117]{stienstra}).

A straightforward  calculation  then shows that 
\begin{equation}\label{eqn:frobtangent}
l(F\tau)=\sum_{n\geq 1}\beta_{np}\frac{\tau^n}{n}.
\end{equation}
Since the map $l$ satisfies
\[
l(\tau)\cong \tau \bmod \ \tau^2,
\]
differentiating the  logarithm gives an isomorphism
\[
dl: T_e(\pdiv[1/p]) \cong T_e \mb{G}_{a, R[1/p]},
\]
(where for a group $\mc{H}$, we denote by $T_e\mc{H}$ the tangent space at the identity, and $\pdiv[1/p]$ denotes the base change of $\pdiv$ to the generic fiber of $\Spec (R)$) and the calculation (\ref{eqn:frobtangent}) shows that $F$ acts as $\beta_p$ on $T_e \mb{G}_{a, R[1/p]}$. Since the logarithm is Frobenius equivariant, $F$ also acts by $\beta_p$ on $T_e\pdiv[1/p]$. Therefore $F$ acts by $\beta_p \bmod \mf{p}$ on the $T_e(\pdiv\otimes R/\mf{p})$, as required. \end{proof}

\subsection{Hodge diamond}
We may now compute all the Hodge numbers of the Godeaux Calabi-Yau threefolds which are $\mu_5$-quotients of quintic hypersurfaces; the computation is straightforward in characteristic zero while somewhat tricky in characteristic 5.

In the following we  denote by $X$ the integral model of the Godeaux Calabi-Yau threefold, and $\overline{X}$ the special fiber. We now  compute the Hodge diamond of $\overline{X}$; first we  prove  the following topological result.
\begin{prop}\label{prop:h3torsion}
Let $X(\mb{C})$ denote the complex manifold associated to the generic fiber of $X$. 
Then the  Betti cohomology group  $H^3(X(\mb{C}), \mb{Z})$ is torsion free.
\end{prop}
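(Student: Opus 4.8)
The plan is to compute $H^3(X(\mathbb{C}),\mathbb{Z})$ via the quotient presentation $\pi\colon Y(\mathbb{C})\to X(\mathbb{C})$, which is a free $\mathbb{Z}/5$-cover of the smooth quintic threefold $Y\subset\mathbb{P}^4_{\mathbb{C}}$. Since $Y$ is a smooth hypersurface in $\mathbb{P}^4$, it is simply connected, its cohomology is torsion free, and by the Lefschetz hyperplane theorem together with Poincar\'e duality one has $H^i(Y,\mathbb{Z})=\mathbb{Z}$ for $i=0,2,4,6$ and $H^3(Y,\mathbb{Z})=\mathbb{Z}^{204}$ (free of rank $b_3(Y)=204$), with all odd cohomology in other degrees vanishing. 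The first step is therefore to run the Cartan--Leray (Hochschild--Serre) spectral sequence $E_2^{p,q}=H^p(\mathbb{Z}/5,H^q(Y,\mathbb{Z}))\Rightarrow H^{p+q}(X,\mathbb{Z})$, using that $\pi_1(X)=\mathbb{Z}/5$ acts on $Y$ freely with $Y$ simply connected, so $Y$ is the universal cover and this is exactly the spectral sequence of the group extension.

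The key point to extract is that, for the target degree $n=3$, the relevant $E_2$ terms on the line $p+q=3$ are $H^0(\mathbb{Z}/5,H^3(Y,\mathbb{Z}))$, $H^1(\mathbb{Z}/5,H^2(Y,\mathbb{Z}))$, $H^2(\mathbb{Z}/5,H^1(Y,\mathbb{Z}))$, and $H^3(\mathbb{Z}/5,H^0(Y,\mathbb{Z}))$. Since $H^1(Y,\mathbb{Z})=0$, the $p=2$ term dies. Since $H^0(Y,\mathbb{Z})=\mathbb{Z}$ with trivial action, $H^3(\mathbb{Z}/5,\mathbb{Z})=0$ (odd group cohomology of $\mathbb{Z}/5$ with trivial integral coefficients vanishes), so the $p=3$ term dies too. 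The $p=0$ term $H^3(Y,\mathbb{Z})^{\mathbb{Z}/5}$ is a subgroup of a free abelian group, hence free. The only potential source of torsion is thus $H^1(\mathbb{Z}/5,H^2(Y,\mathbb{Z}))$; but $H^2(Y,\mathbb{Z})=\mathbb{Z}$ is generated by the hyperplane class, which is $\mathbb{Z}/5$-invariant (the group acts trivially on $H^2$, since it acts trivially on $\mathrm{Pic}$ up to the covering class — more simply, $H^2(Y,\mathbb{Q})$ is one-dimensional and the action has finite order, hence is trivial), so $H^1(\mathbb{Z}/5,\mathbb{Z})=0$ with trivial coefficients. Hence every entry on the $p+q=3$ antidiagonal of $E_2$ is either zero or free, and since subquotients of free abelian groups that survive to $E_\infty$ remain free, and the associated graded of $H^3(X,\mathbb{Z})$ has only free pieces, $H^3(X,\mathbb{Z})$ is torsion free.

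I expect the main obstacle to be pinning down the $\mathbb{Z}/5$-action on $H^2(Y,\mathbb{Z})$ and $H^4(Y,\mathbb{Z})$ carefully enough to be sure $H^1(\mathbb{Z}/5,H^2(Y,\mathbb{Z}))$ and $H^0(\mathbb{Z}/5,H^4)$ contribute nothing unexpected — although since these groups are cyclic of rank one and the action is of finite order over $\mathbb{Q}$, triviality of the action is automatic, so this is really just bookkeeping. A secondary subtlety is that one should double-check there are no nonzero differentials into or out of the $p+q=3$ line that could in principle create torsion; but differentials go $d_r\colon E_r^{p,q}\to E_r^{p+r,q-r+1}$, and the only way torsion could appear in $H^3$ is as a quotient, which is controlled once we know all the $E_2^{p,q}$ with $p+q=3$ are free and the incoming differentials land in free groups. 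An alternative, possibly cleaner, route I would keep in reserve: use the universal coefficient theorem and the transfer argument — $\pi_*\pi^* = \times 5$ on $H^*(X,\mathbb{Z})$ shows that $\mathrm{Tors}\,H^3(X,\mathbb{Z})$ is killed by $5$, so it is an $\mathbb{F}_5$-vector space, and then compare $\dim_{\mathbb{F}_5}H^3(X,\mathbb{F}_5)$ with $b_3(X)$ (computed from the spectral sequence over $\mathbb{Q}$, or via $\chi$) to see there is no room for $5$-torsion; combined with the fact that $X$ is $3$-dimensional so torsion in $H^3$ and $H^4$ are linked by UCT, this forces the torsion to vanish.
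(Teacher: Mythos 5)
Your argument is correct and is essentially the paper's: both run the Cartan--Leray/Serre spectral sequence of the free $\mathbb{Z}/5$-cover of $X(\mathbb{C})$ by the simply connected quintic $Y(\mathbb{C})$, using that $H^*(Y,\mathbb{Z})$ is torsion free and that the relevant (co)homology of $\mathbb{Z}/5$ with trivial $\mathbb{Z}$-coefficients vanishes. The only cosmetic difference is that the paper works in homology, shows $H_2(X(\mathbb{C}),\mathbb{Z})\cong\mathbb{Z}$, and then applies universal coefficients, whereas you read off $H^3$ directly from the $p+q=3$ line in cohomology (where, as you note, the sole surviving term $E_\infty^{0,3}$ is a subgroup of the free group $H^3(Y,\mathbb{Z})^{\mathbb{Z}/5}$, so no torsion can arise from quotienting).
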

\begin{proof}
We apply the homology Serre spectral sequence to the fibration $X(\mb{C})\rightarrow K(\pi_1(X(\mb{C})))$, whose fiber is the universal cover of $X(\mb{C})$, which we denote by $\tilde{X}(\mb{C})$. We remind the reader that for a fibration
\[
F\rightarrow X\rightarrow B
\]
this  $E^2$-page  spectral sequence is given by 
\[
E^2_{p,q}=H_p(B, H_q(F)) \Rightarrow H_{p+q}(X),
\]
where all cohomologies are taken with integral coefficients. In the following we will denote by $d^i$ the differentials on the $i$th page.  In our setup the $E^2$-page    simplifies to
\begin{equation}\label{eqn:explicitserre}
E^2_{p,q}=H_p(\mb{Z}/5, H_q(\tilde{X}(\mb{C}))),
\end{equation}
where the right hand side of (\ref{eqn:explicitserre})   denotes group homology. Since $\tilde{X}(\mb{C})$ is simply connected, $H_1(\tilde{X}(\mb{C}))=0$, and therefore we have 
\[
E^2_{p,1}=0 \ \text{for all}\ p.
\]
Hence the differential 
\[
d^2: E^2_{2,1}\rightarrow E^2_{0,2}
\]
is zero, and we have  
\begin{align}\label{eqn:serress}
E^{\infty}_{0,2}&=E^2_{0,2}/\Imag(d^3)\\
                &=H_0(\mb{Z}/5, H_2(\tilde{X}(\mb{C})))/d^3(H_3(\mb{Z}/5, \mb{Z})).
\end{align}
Now recall that $\tilde{X}(\mb{C})$ is a hypersurface in $\mb{P}^4$, and therefore 
\[
H_2(\tilde{X}(\mb{C}))\cong \mb{Z}
\]
by the Lefschetz hyperplane theorem.  The action of $\mb{Z}/5$ on $H_2(\tilde{X}(\mb{C}))$  can only be  the trivial one, and therefore   we have 
\[
H_0(\mb{Z}/5, H_2(\tilde{X}(\mb{C})))\cong \mb{Z}.
\]
On the other hand $H_3(\mb{Z}/5, \mb{Z})\cong \mb{Z}/5$ which is torsion, and hence  $d^3(H_3(\mb{Z}/5, \mb{Z}))$  vanishes. Using (\ref{eqn:serress}) we conclude that 
\[
E^{\infty}_{0,2}\cong H_2(\tilde{X}(\mb{C}))\cong \mb{Z}.
\]
The filtration on $H_*(X(\mb{C}),\mb{Z})$ given by this spectral sequence gives a short exact sequence
\[
0\rightarrow E^{\infty}_{0,2}\rightarrow H_2(X(\mb{C}),\mb{Z})\rightarrow E^{\infty}_{2,0}\rightarrow 0.
\]
However, we also have 
\[
E^{\infty}_{2,0}\cong E^2_{2,0}\cong H_2(\mb{Z}/5, \mb{Z})=0,
\]
and therefore 
\[
H_2(\tilde{X}, \mb{Z})\cong \mb{Z}.
\]
We conclude that $H^3(X(\mb{C}), \mb{Z})$ is torsion free by the universal coefficients theorem, as claimed.
\end{proof}

This gives immediately the following
\begin{cor}
The Hodge diamond of $\overline{X}$ is  given by 
\begin{equation}\label{eqn:godeauxhodgediamond}
\begin{array}{ccccccc}
&&& 1 &&&\\
&& 1 && 0 &&\\
& 0 && 2 && 0 &\\ 
1 && 21 && 21 && 1\\
& 0 && 2 && 0 &\\ 
&& 0 && 1 &&\\
&&& 1 &&&.\\
\end{array}
\end{equation}
\end{cor}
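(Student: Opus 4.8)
The plan is to compute the Hodge diamond of $\overline{X}$ entry by entry, combining the inputs already established in this section with Deligne--Illusie degeneration and the torsion-freeness just proved. First I would record what is already known: $\overline{X}$ is Calabi--Yau, so $h^{00}=h^{33}=1$ and $h^{30}=h^{03}=1$ (triviality of the canonical bundle), and $h^{01}=\dim T_e\Pic(\overline{X})=0$ together with its Serre dual $h^{23}=0$. Theorem~\ref{thm:joshi} gives $h^{10}=1$, and Proposition~\ref{prop:global2forms} gives $h^{20}=0$, with Serre duals $h^{13}=1$ and $h^{02}=0$... wait, I should be careful: Serre duality on a threefold relates $h^{i}(\Omega^j)$ to $h^{3-i}(\Omega^{3-j})$, so $h^{0}(\Omega^1)=h^{3}(\Omega^2)$ and $h^0(\Omega^2)=h^3(\Omega^1)$; thus $h^{32}=1$ and $h^{31}=0$. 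This pins down the entire top-left-to-bottom-right antidiagonal structure except for the middle row and the two ``$2$'' entries $h^{11},h^{21}$ (and their reflections), which I will get from Euler characteristics and the degeneration statement.

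Next I would exploit the fact that $\overline{X}$ lifts to $W(k)$ with $\dim\overline{X}=3<5-1$, so by Deligne--Illusie the Hodge--de Rham spectral sequence degenerates at $E_1$; hence $\sum_{i+j=n}h^{ij}(\overline{X})=\dim_k H^n_{\dR}(\overline{X})=b_n(X(\mb{C}))$, the Betti numbers of the characteristic-zero lift. Since $X(\mb{C})$ is a free $\mb{Z}/5$-quotient of a smooth quintic threefold $\tilde Y$, I would compute its Betti numbers: $b_0=b_6=1$, $b_1=b_5=0$ (abelianization $\mb{Z}/5$ is finite), $b_2=b_4=1$ (Lefschetz hyperplane: $H^2(X(\mb{C}),\mb{Q})\cong H^2(\tilde Y,\mb{Q})\cong\mb{Q}$), and $b_3$ is obtained from $\chi(X(\mb{C}))=\chi(\tilde Y)/5=-200/5=-40$, giving $b_3=2+2\cdot1-(-40)=44$. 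So $h^{11}+2=b_2=1$? That is inconsistent — so I must instead use the degeneration only as a consistency check and get $h^{11}$ from $\Pic$: actually $H^2_{\dR}(\overline{X})$ has dimension $b_2=1$ only if there is no extra torsion contribution, but Theorem~\ref{thm:joshi} exhibited an extra class, so $b_2(X(\mb{C}))$ must itself be larger — I would recompute $b_2$ via the Serre spectral sequence argument of Proposition~\ref{prop:h3torsion} rather than naive Lefschetz, obtaining $H^2(X(\mb{C}),\mb{Z})$ from the extension $0\to E^\infty_{0,2}\to H_2\to E^\infty_{2,0}\to 0$ and dualizing, which will give the honest value $b_2$ that makes $h^{20}+h^{11}+h^{02}=0+h^{11}+0=b_2$ hold with $h^{11}=2$; likewise $b_3=h^{30}+h^{21}+h^{12}+h^{03}=1+h^{21}+h^{12}+1=b_3$ forces $h^{21}=h^{12}=21$ once $b_3=44$ and the symmetry $h^{21}=h^{12}$ (which itself follows from degeneration plus the torsion-freeness of $H^3(X(\mb{C}),\mb{Z})$ from Proposition~\ref{prop:h3torsion}, via Caruso's theorem identifying $H^3_{\dR}(\overline{X})$ dimensions with mod-$5$ Betti ranks, forcing no jump in degree $3$ and hence the de Rham $h^{i,3-i}$ to match the Hodge numbers of $\tilde X$).

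The main obstacle I anticipate is the careful bookkeeping in the middle: showing that the only Hodge numbers which ``jump'' relative to the characteristic-zero picture are $h^{10}$ (and dually $h^{23}$-type entries) while $h^{11}=2$ and $h^{21}=h^{12}=21$ remain as for the ordinary quintic quotient. This requires knowing that $H^2_{\dR}(\overline{X})$ and $H^3_{\dR}(\overline{X})$ have the expected dimensions, which is where Proposition~\ref{prop:h3torsion} (torsion-freeness of $H^3$ of the generic fiber) is essential: by Caruso's comparison the mod-$5$ étale cohomology ranks control $\dim_k H^i_{\dR}(\overline{X})$, and torsion-freeness in degree $3$ means no extra de Rham class appears there, so $\dim_k H^3_{\dR}(\overline{X})=b_3(X(\mb{C}))$ with $b_3$ as computed from $\chi$ and the lower Betti numbers. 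Combined with $E_1$-degeneration and Serre duality this determines every remaining entry; the reflection symmetry of the diamond across the vertical axis then follows formally. I would end by simply displaying the resulting array, verifying it matches \eqref{eqn:godeauxhodgediamond} and in particular that $\sum(-1)^{i+j}h^{ij}=\chi=-40$ as a final sanity check.
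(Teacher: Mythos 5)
Your overall strategy --- Caruso's comparison plus Deligne--Illusie degeneration plus the already-computed $h^{10},h^{01},h^{20}$ --- is the same as the paper's, and your treatment of degrees $0,1,3$ (and their Serre duals) is essentially sound. But the step that is supposed to produce $h^{11}=2$ contains a genuine error. You correctly notice the apparent inconsistency with $b_2=1$, but your proposed resolution --- that ``$b_2(X(\mb{C}))$ must itself be larger'' and that rerunning the Serre spectral sequence of Proposition \ref{prop:h3torsion} will give the ``honest value'' of $b_2$ making $h^{11}=2$ --- is wrong, and the computation you describe would not deliver it: that spectral sequence shows $H_2(X(\mb{C}),\mb{Z})\cong\mb{Z}$, so the rational Betti number really is $b_2=1$ and Lefschetz was never the problem. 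The point you are missing is that Theorem \ref{thm:carusofaltings} identifies $\dim_k H^2_{\dR}(\overline{X})$ with $\dim_{\mb{F}_5}H^2(X(\mb{C}),\mb{F}_5)$, not with $b_2$. By universal coefficients $H^2(X(\mb{C}),\mb{Z})\cong\mb{Z}\oplus\mb{Z}/5$ (the torsion being $\Ext(H_1,\mb{Z})$ with $H_1\cong\mb{Z}/5$), and since $H^3(X(\mb{C}),\mb{Z})$ is torsion free, $\dim_{\mb{F}_5}H^2(X(\mb{C}),\mb{F}_5)=2$ while $b_2=1$. This is exactly how the paper obtains $H^2_{\dR}(\overline{X})\cong k^2$, whence $h^{11}=2$ from $E_1$-degeneration and $h^{20}=h^{02}=0$ (the paper phrases the same fact as: the integral $H^1(\mc{X},\Omega^1)$ must carry torsion, so $h^{11}(\overline{X})\geq 2$, with degeneration giving the upper bound). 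As written, your plan computes the wrong invariant and then asserts it equals $2$; you need to replace ``rational Betti number'' by ``mod-$5$ Betti number'' throughout degree $2$ (and dually degree $4$).

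Two smaller points. Your degree-$3$ count is fine ($b_3=44$ from $\chi=-40$, no mod-$5$ jump there by Proposition \ref{prop:h3torsion}, and $h^{21}=h^{12}=21$ by Serre duality), and agrees with the paper. But your closing claim that ``the reflection symmetry of the diamond across the vertical axis then follows formally'' is false for this diamond: $h^{10}=1\neq 0=h^{01}$, so Hodge symmetry fails in degree $1$; the only symmetry you may invoke is the central (Serre-duality) one.
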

\begin{proof}
By  Proposition \ref{prop:h3torsion} and Caruso's theorem, 
\[
H^2_{\dR}(\overline{X})\cong k^2, \ H^1_{\dR}(\overline{X})\cong k.
\]
Recall also that we have 
\[
H^0(\overline{X}, \Omega^1)\cong k, \ H^1(\overline{X}, \mc{O})\cong 0.
\]
Therefore (recall that $X$ denotes an integral lift of $\overline{X}$) $H^1(X, \Omega^1)$ has non-trivial torsion. Hence $H^1(\overline{X}, \Omega^1)$ is at least two-dimensional. On the other hand the Hodge-de Rham spectral sequence degenerates at the $E_1$-page since $\overline{X}$ lifts to $W_2$, and we conclude that the Hodge diamond is as shown in (\ref{eqn:godeauxhodgediamond}).

\end{proof}

\section{The CvS Calabi-Yau threefolds}
We recall the construction of some remarkable Calabi-Yau threefolds given by Cynk-van Straten \cite{cvs}. The general class to which these Calabi-Yau threefolds belong is known as the class of ``double octic" Calabi-Yau threefolds, and arise as (resolutions of) double covers  $X\rightarrow \mb{P}^3$ branched along a degree eight hypersurface $D$. It is straightforward to check that eight is precisely the degree for the Calabi-Yau condition $K_X=0$. The cover of $\mb{P}^n$ is smooth if the divisor $D$ is, and will have singularities whenever $D$ does as well. The latter is the case of interest since  $D$ will be taken to be a union of eight hyperplanes in special positions. In this case we will have to blow up along double and triple lines (i.e. lines lying on the intersection of two or three hyperplanes, respectively) and also fourfold and fivefold points (i.e. points lying on the intersection of four or five hyperplanes, respectively).

The variety $X_5$ in characteristic 5 is constructed by taking the hypersurface to be the following union of eight hyperplanes
\[
(x-t)(x+t)(y-t)(y+t)(z-t)(z+t)(x + y + Az - At)(x- By- Bz+ t)=0,
\]
where $A,B $ are the two solutions to the equation
\[
x^2+x-1=0.
\]
These hyperplanes intersect at 28 double lines (all pairwise intersections) and 9 fourfold points (for completeness we list these here: the fourfold points are given by  
\begin{align*}
(x,y,z,t)=&(2A+1, -1, -1, 1),\ (1,1,\frac{A}{A-2},1),\  (1, -1, 1,1),\  (-1, -1, 1,1), \\
&(-1, 1,1,1),\  (-1, 1, -1, 1),\  (1,0,0,0),\ (0,1,0,0),\ (0,0,1,0),
\end{align*}
as is easy to verify).  Therefore, after blowing up at these 9 fourfold points and the 28 double lines, we obtain a smooth Calabi-Yau threefold, defined over $\mb{Q}(\sqrt{5})$. In fact, the intersection pattern of the reduction of this hyperplane arrangement is exactly the same after reduction $\bmod \ \pi$, where $\pi=\sqrt{5}$, and so we obtain a smooth Calabi-Yau threefold $\mc{X}$ defined over $\mb{Z}[\frac{-1+\sqrt{5}}{2}].$ We will denote the $\pi$-adic completion of this ring by $\mb{O}$, and its field of fractions of $K$; by abuse of notation we will denote the  base change of $\mc{X} $ over $\mc{O}$ as $\mc{X}$ as well. The following theorem about this variety is proved by Cynk-van Straten in \cite{cvs}
\begin{thm}[\cite{cvs}]\label{thm:cyconditions}\hfill
\begin{enumerate}
\item The variety $\mc{X}/\mc{O}$ is smooth and is a strict Calabi-Yau threefold; in particular
\[
H^1(X_5, \mc{O})=H^2(X_5, \mc{O})=0, \Omega^3\cong \mc{O}.
\]
    \item The Hodge numbers of the generic fiber $X_K$ are given by 
    \[
    h^{11}=38, \ h^{21}=0,
    \]
    whereas the Hodge numbers of the special fiber $X_5$ are given by 
    \[
    h^{11}=39, \ h^{21}=1.
    \]
    Therefore the threefold is rigid in characteristic zero but not in characteristic 5;
    \item The unique first order deformation of $X_5$ over $\mb{F}_5[\epsilon]/\epsilon^2$ is not liftable to $\mb{F}_5[\epsilon]/\epsilon^3$.
    \item Moreover, $X_5$ does not lift to $W_2$.
    
\end{enumerate}
\end{thm}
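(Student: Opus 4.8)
The statement splits into two halves: parts (1) and (2) concern the birational geometry and Hodge theory of the resolved double octic, while parts (3) and (4) are deformation-theoretic.

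\emph{Parts (1) and (2).} For (1), I would begin with the singular double cover $\bar{X}\to\mb{P}^3_{\mc{O}}$ branched over the union $D$ of the eight hyperplanes, and verify that $D$ has exactly the stated singularities ($28$ double lines and $9$ fourfold points, nothing worse), with incidence pattern unchanged modulo $\pi$ -- the combinatorial input already recorded before the statement. Then apply the standard resolution of an admissible double octic: a fixed sequence of blow-ups of $\mb{P}^3$ (the fourfold points, then the strict transforms of the double lines), after which the double cover branched along the corrected strict transform of $D$ is smooth; since $2$ is invertible in $\mc{O}$ this is carried out integrally and produces $\mc{X}/\mc{O}$. Two points remain: the blow-ups are crepant, so $\omega_{\mc{X}}\cong\mc{O}_{\mc{X}}$ (the standard crepancy computation for the Gorenstein canonical singularities of $\bar{X}$), and those singularities are rational, so $Rf_*\mc{O}_{\mc{X}}=\mc{O}_{\bar{X}}$ and hence $H^i(\mc{X},\mc{O})=H^i(\mb{P}^3,\mc{O})\oplus H^i(\mb{P}^3,\mc{O}(-4))=0$ for $i=1,2$; this gives smoothness of $\mc{X}/\mc{O}$ and the strict Calabi-Yau property on fibers. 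For (2), smooth proper base change gives $b_i(X_5)=b_i(X_K)$ for all $i$, in particular $e(X_5)=e(X_K)$; since $e=2(h^{11}-h^{21})$ for a Calabi-Yau threefold, it suffices to determine $h^{21}$ on each fiber. On the generic fiber this is Cynk-van Straten's defect computation: the eight-plane configuration imposes enough conditions on octics that the defect, i.e.\ $h^{21}(X_K)$, vanishes -- equivalently $H^3(X_K)$ is two-dimensional, attached to a weight-four modular form, which they confirm by point counts. On the special fiber the configuration degenerates onto a discriminant locus: $\mathrm{disc}(x^2+x-1)=5$, so $A\equiv B\pmod{\pi}$ and the nine fourfold points sit in special position, contributing one extra divisor class (so $h^{11}=39$) and one deformation unobstructed to first order (so $h^{21}(X_5)=1$). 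That the jump is exactly by one is the delicate point; one pins it down either by redoing the combinatorics over $\mb{F}_5$ or, a posteriori, from Theorem~\ref{thm:cvsintro}, which forces $\dim_{\mb{F}_5}H^3_{\dR}(X_5)=b_3(X_5)=b_3(X_K)=2$ and, with Hodge symmetry for $X_5$, leaves only $h^{21}(X_5)=1$; then $h^{11}(X_5)=39$ follows from $e=2(h^{11}-h^{21})$.

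\emph{Parts (3) and (4).} Here the structural fact is that $h^1(X_K,T_{X_K})=h^{21}(X_K)=0$, so the generic fiber is rigid, whereas $h^1(X_5,T_{X_5})=h^{21}(X_5)=1$; hence the miniversal deformation ring $R$ of $X_5$ over complete local $\mb{Z}_5$-algebras is a quotient of $\mb{Z}_5[[t]]$, and the Kodaira-Spencer class of $\mc{X}/\mc{O}$ -- which one checks generates $H^1(X_5,T_{X_5})$ -- yields a surjection $R\twoheadrightarrow\mc{O}$ inducing an isomorphism on relative tangent spaces. The crux is the nonvanishing of the primary obstruction $[\xi,\xi]\in H^2(X_5,T_{X_5})\cong H^2(X_5,\Omega^1_{X_5})$ of a generator $\xi$ of $H^1(X_5,T_{X_5})$ -- equivalently, that $R\otimes_{\mb{Z}_5}\mb{F}_5$ is the length-two ring $\mb{F}_5[\epsilon]$ and nothing larger: this is the characteristic-$5$ failure of Bogomolov-Tian-Todorov, and I expect it to be the main obstacle, to be settled by an explicit local computation with the double-octic model (this is what Cynk-van Straten carry out). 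Granting it, $R\twoheadrightarrow\mc{O}$ is an isomorphism -- both sides reduce to $\mb{F}_5[\epsilon]$ modulo $5$, and Nakayama finishes -- so $R\cong\mc{O}$. Then (3) is immediate: the unique first-order deformation over $\mb{F}_5[\epsilon]/\epsilon^2$ is the nontrivial map $R/5=\mb{F}_5[\epsilon]\to\mb{F}_5[\epsilon]/\epsilon^2$, and it cannot extend to $\mb{F}_5[\epsilon]/\epsilon^3$, since a lift would force $\epsilon^2=0$ in the target. And (4) follows because a lift of $X_5$ to $W_2(\mb{F}_5)=\mb{Z}/25$ would give a $\mb{Z}_5$-algebra map $\mc{O}\to\mb{Z}/25$, i.e.\ an element of $\mb{Z}/25$ whose square is a unit times $5$, of which there is none; alternatively, since $\dim X_5=3<5$ and, by Theorem~\ref{thm:cvsintro}, the Hodge-de Rham spectral sequence of $X_5$ fails to degenerate (its degree-three Hodge numbers sum to $4$ while $\dim H^3_{\dR}(X_5)=2$), Deligne-Illusie already excludes a lift to $W_2$.
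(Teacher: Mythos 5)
The first thing to say is that the paper offers no proof of Theorem~\ref{thm:cyconditions} at all: it is imported wholesale from \cite{cvs} (``The following theorem about this variety is proved by Cynk--van Straten''), so there is no in-paper argument to compare against, and any reconstruction must ultimately lean on the computations of \cite{cvs}. Your outline has the right overall shape for what CvS do --- resolve the admissible double octic by blowing up the fourfold points and double lines, use crepancy and rationality of the singularities for (1), compute $h^{11}$ and $h^{21}$ from the arrangement for (2), and identify the versal deformation ring with $\mc{O}$ for (3) and (4); your Nakayama argument that $R\twoheadrightarrow\mc{O}$ with $R/5\cong\mc{O}/5$ forces $R\cong\mc{O}$, and the observation that $\mc{O}$ admits no local map to $\mb{Z}/25$, are clean and correct granted the obstruction computation you defer to \cite{cvs}.

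The genuine problems are the circular shortcuts you offer for the special fiber. First, the identity $e=2(h^{11}-h^{21})$ on $X_5$ presupposes $h^{10}(X_5)=h^{20}(X_5)=0$; in characteristic $5$ there is no a priori Hodge symmetry, and these vanishings are precisely what this paper proves \emph{later} (Lemma~\ref{lem:h10} and Theorem~\ref{thm:cvs5crystorsionfree}), using Theorem~\ref{thm:cyconditions} as input --- the Remark after Theorem~\ref{thm:cvsintro} states explicitly that CvS did \emph{not} compute $H^0(\Omega^1)$ and $H^0(\Omega^2)$. So the Euler-characteristic route to $h^{11}(X_5)=39$ cannot be used here; CvS obtain $h^{11}$ and $h^{21}$ of the special fiber directly from the resolution and the equisingular deformation theory of the arrangement. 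Second, both of your ``a posteriori'' fallbacks --- pinning down $h^{21}(X_5)=1$ from Theorem~\ref{thm:cvsintro}, and deducing non-liftability to $W_2$ from the failure of Hodge--de Rham degeneration --- invoke results that the paper derives \emph{from} Theorem~\ref{thm:cyconditions}, so they cannot serve as proofs of it. Note also that one cannot rescue the degeneration argument by computing $\dim H^3_{\dR}(X_5)$ from Betti cohomology via Theorem~\ref{thm:carusofaltings}: the ramification degree is $e=2$, so the hypothesis $ie<p-1$ fails already for $i=2$, and the comparison is only available in degree $1$ (which is exactly how the paper uses it). The non-circular content of your proposal is therefore: the resolution/crepancy/rationality argument for (1), the generic-fiber defect computation for (2), and the versal-ring argument for (3)--(4) conditional on the explicit obstruction calculation --- all of which must be, and are, carried out in \cite{cvs}.
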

\begin{rmk}
We remark that the jump in  the cohomology group $H^{11}$ upon reduction $\bmod \ p$, or equivalently in $H^{22}$, can be explained by the obstruction class of the deformation in the direction of the extra class in $H^{21}$.
\end{rmk}
\begin{rmk}
Note that the cohomology group $H^2(X_K, \mb{Z})$ has rank 38 since each blow up centered at the  28 lines and 9 points adds a class in $H^2$ and we start with $\mb{P}^3$ which has $H^2\cong \mb{Z}$. In other words, all the classes in $H^2(X_K)$ comes from the base of the double covering map.
\end{rmk}
There is a similar construction of the variety $Y_3$ in characteristic 3; the difference with the previous case is that the Calabi-Yau threefold constructed in characteristic 0 has bad reduction at 3, and one must perform a small resolution. The details can be found in \cite[Section 5.1]{cvs}. Here we merely summarize the facts we need in what follows:
\begin{thm}[\cite{cvs}]\label{thm:cvschar3}
There is a strict Calabi-Yau threefold $Y_3$ over $\mb{F}_3$ with no lift to any ring in which  $3\neq 0$. Its Hodge numbers are given by 
\[
h^{11}=42, \ h^{21}=0.
\]
In particular, the variety $Y_3$ is rigid and does not lift to $W_2$. 
\end{thm}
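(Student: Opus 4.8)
The statement is that of Cynk-van Straten \cite[\S 5.1]{cvs}, so the plan is to reprise their construction of $Y_3$ and then verify the three assertions---strict Calabi-Yau, Hodge numbers, non-liftability---in turn.

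\emph{Construction.} As in the characteristic-$5$ case one starts from a configuration of eight hyperplanes in $\mb{P}^3$, this time defined over $\mb{Q}$ (or a small number field), chosen so that the double cover branched along their union resolves---after blowing up the double and triple lines and the fourfold and fivefold points of the arrangement---to one of the smooth rigid double octics of \cite{cvs}. The new feature at $3$ is that this characteristic-zero threefold has \emph{bad reduction}: the reduction of the arrangement degenerates, intersection loci collide, and the reduced double octic picks up isolated compound Du Val singularities that the analogous blow-ups no longer resolve compatibly with $K$-triviality. One then passes to a small resolution $Y_3\to\overline{X}_3$ of the reduction---contracting only rational curves, no divisor---and checks, as in loc.\ cit., that such a projective small resolution already exists over $\mb{F}_3$. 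Triviality of $\omega_{Y_3}$ is then automatic because the resolution is small, while $H^1(Y_3,\mc{O})=H^2(Y_3,\mc{O})=0$ follows from the splitting $\pi_*\mc{O}\cong\mc{O}_{\mb{P}^3}\oplus\mc{L}^{-1}$ for the double cover together with the Leray sequence of the small resolution, exactly as in the treatment of $X_5$; this gives that $Y_3$ is a strict Calabi-Yau threefold.

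\emph{Hodge numbers.} For $h^{11}=42$ I would count divisor classes: the hyperplane class of $\mb{P}^3$ contributes one, each blown-up line or point of the arrangement an exceptional divisor, the small resolution none, and one checks that these $42$ classes span $H^2$ by pairing them against curves in the exceptional loci, so $b_2(Y_3)=42$. The vanishing $h^{21}=0$---the rigidity---comes from the combinatorics: the primitive part of $H^3(Y_3)$ is the anti-invariant part of $H^3$ of the double cover, controlled by the numbers of double and triple lines and of fourfold and fivefold points of the arrangement via the Cynk-van Straten formula for the Hodge numbers of double octics, and for this arrangement that formula returns $0$. Together with the strict Calabi-Yau property---which forces $h^{30}=h^{03}=1$ and $h^{20}=h^{02}=0$---this pins down the whole Hodge diamond.

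\emph{Non-liftability.} This is the real content and the step I expect to be the main obstacle. Since $h^{21}(Y_3)=0$ the threefold is already infinitesimally rigid over $\mb{F}_3$, so the obstruction is entirely to lifting into mixed characteristic; a flat lift over a characteristic-zero DVR (more generally over any $\mb{Z}_3$-flat ring) would restrict, by reduction modulo $9$, to a lift over $W_2(\mb{F}_3)$, so the crucial case is non-liftability to $W_2$, and a little extra care then handles a lift over an arbitrary ring in which $3\ne0$. Heuristically the small resolution is to blame: its exceptional $\mb{P}^1$'s are rigid curves forcing the singular model $\overline{X}_3$ to persist, whereas every characteristic-zero double octic built from the relevant arrangement is resolved by honest blow-ups and hence has strictly larger $b_2$, contradicting local constancy of $b_2$ in a smooth proper family. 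Turning this into a proof is the deformation-theoretic computation of \cite{cvs}: one shows that the obstruction class governing lifts of $Y_3$ to $W_2$, which lies in $H^2(Y_3,T_{Y_3})\cong H^2(Y_3,\Omega^2_{Y_3})$, is non-zero. I expect the extraction of that obstruction class from the explicit double-octic model---together with the verification that the small resolution is projective over $\mb{F}_3$---to be the technical heart of the argument; granting it, the remaining clauses are immediate, since ``$Y_3$ is rigid'' is precisely the statement $h^{21}=0$ and $W_2(\mb{F}_3)$ is a ring in which $3\ne0$.
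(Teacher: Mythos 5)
The first thing to say is that the paper offers no proof of this statement whatsoever: it is imported verbatim from Cynk--van Straten \cite[Section 5.1]{cvs}, prefaced by ``Here we merely summarize the facts we need,'' so there is no internal argument to measure your proposal against, and a faithful reconstruction of loc.\ cit.\ is the right target. Your outline of the construction (the arrangement degenerating at $3$, nodes on the reduction, a projective small resolution $Y_3\to\overline{X}_3$, triviality of $\omega_{Y_3}$ because the resolution is small, the divisor count giving $h^{11}=42$, the combinatorial vanishing of $h^{21}$) is consistent with what Cynk--van Straten do. But there is one step in your sketch that, as written, fails: a flat lift over an arbitrary ring in which $3\neq 0$ does \emph{not} ``restrict, by reduction modulo $9$, to a lift over $W_2(\mb{F}_3)$.'' For a ramified DVR $\mc{O}$ with uniformizer $\pi$ one has $\mc{O}/\pi^2\cong\mb{F}_3[\epsilon]/\epsilon^2$, an $\mb{F}_3$-algebra carrying no information about $W_2$, and $\mc{O}/9$ is not $W_2$ either. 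The correct deduction runs through the rigidity you already invoked: since $H^1(Y_3,T_{Y_3})\cong H^1(Y_3,\Omega^2_{Y_3})=0$, Schlessinger's theorem forces the hull of the deformation functor to be a quotient of $W(\mb{F}_3)$ with no extra variables, i.e.\ $W/3^n$ or $W$ itself; a lift over \emph{any} local Artinian ring with $3\neq 0$ then forces $n\geq 2$ and hence a lift over $W_2$. So ``no lift to $W_2$'' and ``no lift to any ring with $3\neq 0$'' are indeed equivalent here, but via the hull, not via reduction mod $9$.

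The remaining heart of the matter---non-liftability to $W_2$ itself---is also not established in \cite{cvs} by exhibiting a nonzero obstruction class in $H^2(Y_3,T_{Y_3})$ as you propose; their argument is geometric and runs through the small resolution: the exceptional $\mb{P}^1$'s have normal bundle $\mc{O}(-1)\oplus\mc{O}(-1)$, hence deform uniquely and without obstruction along any infinitesimal lift of $Y_3$, so a lift of $Y_3$ would contract to a lift of the nodal model $\overline{X}_3$ in which the extra nodes persist---incompatibly with the behaviour of the arrangement away from the fiber over $3$. Your $b_2$ heuristic points in the same direction but cannot be upgraded directly, since a hypothetical lift need not be a double octic of the given arrangement. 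Since both you and the paper ultimately defer this step to \cite{cvs}, the proposal is acceptable as a summary of an external result, provided the mod-$9$ reduction is replaced by the hull argument above.
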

\section{Supersingularity}
In this section we show that both threefolds $Y_3$, $X_5$ constructed by Cynk-van Straten are supersingular (see Definition \ref{defn:supersingular}). 

We first recall the definition of the Artin-Mazur formal group: for any variety $X$ of dimension $n$ over  an algebraically closed  field $k$ of positive characteristic, the functor $\Phi$ on Artin local $k$-algebras with residue field $k$ given by 
\[
A\mapsto \Phi(A)\defeq H^n(X\otimes_kA, \mb{G}_m)\rightarrow H^n(X\otimes_k, \mb{G}_m)
\]
is representable by a formal group when $H^i(X, \mc{O})=0$ for all $i\neq 0, n$. So in particular $\Phi$ is representable whenever $X$ is a strict Calabi-Yau variety.
\begin{defn}
For $X$ a strict CY, the one-diensional formal group representing the functor $\Phi$ is called the Artin-Mazur formal group of $X$. The height of $X$, written $\height(X)$, is defined to be the height of this formal group.
\end{defn}

Note that the height can be a positive integer or infinity; the latter occurs when the formal group is isomorphic to $\hat{\mb{G}}_a$.

\begin{defn}\label{defn:supersingular}
We say that  a Calabi-Yau threefold $X$ in characteristic $p$ is supersingular if its height is $\infty$.
\end{defn}

Now we recall another notion of height, introduced by \cite{yobuko}. For this recall that we have the Witt sheaves $W_m(\mc{O})$, as well maps Frobenius maps $F: W_m(\mc{O})\rightarrow F_*W_m(\mc{O})$ and restriction maps $R_{m-1}: W_m(\mc{O})\rightarrow  \mc{O}$. 

\begin{defn}
The quasi-Frobenius splitting height $\height^s(X)$ is the smallest postive integer $h$ such that there exists a map $\phi: F_*W_m(\mc{O})\rightarrow O$ such that the  diagram 

\[ 
\begin{tikzcd}
W_m\mc{O}_X \arrow{d}[swap]{R^{m-1}} \arrow{r}{F} & F_*W_m\mc{O}_X \arrow{ld}{\phi}  \\
\mc{O}_X  &  \\
\end{tikzcd}
\]
commutes. Note that as in the case of the Artin-Mazur height this is a positive integer or infinity.
\end{defn}

The following result was proved by Yobuko in \cite{yobuko}
\begin{thm}\label{thm:yobuko}
\hfill
\begin{enumerate}
\item For a Calabi-Yau variety over $k$, the Artin-Mazur height is equal to the the quasi-Frobenius splitting height, that is,
\[
\height(X)=\height^s(X).
\]
\item If the height is finite, then $X$ admits a lift to $W_2(k)$.
\end{enumerate}
\end{thm}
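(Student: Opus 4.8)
The plan is to extract both heights from one object, the top Witt-vector cohomology $H^n(X,W\mc{O}_X)$ (with $n=\dim X$) together with its Frobenius $F$ and Verschiebung $V$, by showing that each of the two heights equals the least integer $m$ for which $F$ acts nontrivially on the finite level $H^n(X,W_m\mc{O}_X)$. For the Artin--Mazur side I would invoke the Artin--Mazur identification of the Dieudonn\'e module of the formal group $\Phi_X$ --- the one representing $A\mapsto H^n(X\otimes_kA,\hat{\mb{G}}_m)$ --- with $H^n(X,W\mc{O}_X)$, under which $F$ and $V$ become the Witt-vector operators. Since $\Phi_X$ is one-dimensional and connected, $\height(X)<\infty$ exactly when $H^n(X,W\mc{O}_X)$ is a free $W(k)$-module of finite rank, equal to $\height(X)$. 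Feeding the exact sequences $0\to\mc{O}_X\xrightarrow{V^{m-1}}W_m\mc{O}_X\xrightarrow{R}W_{m-1}\mc{O}_X\to0$ into $H^n$ and using the structure theory of one-dimensional formal groups then yields the reformulation: $\height(X)\le m$ if and only if $F\neq 0$ on $H^n(X,W_m\mc{O}_X)$.

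For the quasi-Frobenius-splitting side I would dualize. The absolute Frobenius $F\colon X\to X$ is finite flat and $\omega_X\cong\mc{O}_X$, so Grothendieck--Serre duality identifies $\Hom_{\mc{O}_X}(F_*W_m\mc{O}_X,\mc{O}_X)$, as well as the higher $\Ext$ groups carrying the obstruction to extending a partially-defined splitting, with $k$-linear duals of the cohomology of $W_m\mc{O}_X$. Chasing the square that defines $\height^s$, the existence of a $\phi\colon F_*W_m\mc{O}_X\to\mc{O}_X$ with $\phi\circ F=R^{m-1}$ then translates into the statement $F\neq 0$ on $H^n(X,W_m\mc{O}_X)$ --- precisely the condition from the previous paragraph --- so the minimal admissible $m$ on the two sides agree and $\height(X)=\height^s(X)$.

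For part~(2), assume $h\defeq\height(X)=\height^s(X)<\infty$, so that a quasi-Frobenius splitting $\phi\colon F_*W_h\mc{O}_X\to\mc{O}_X$ exists. The obstruction to lifting $X$ to $W_2(k)$ lies in $H^2(X,T_X)$, which for a Calabi--Yau $n$-fold is $H^2(X,\Omega^{n-1}_X)$ via contraction with the trivializing $n$-form; I would show $\phi$ annihilates it. Concretely, choose an affine cover with local $W_2(k)$-lifts: the differences of the lifts form a derivation-valued $1$-cochain whose coboundary represents the obstruction, and this whole package arises by pushforward from the Witt sheaf $W_h\mc{O}_X$, on which $\phi$ splits the very map through which the obstruction is built, so the obstruction cocycle is forced to be a coboundary. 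This generalizes the case $h=1$, in which an honest Frobenius splitting of a Calabi--Yau already forces a lift to $W_2(k)$.

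I expect the main obstacle to be the duality bookkeeping in part~(1): one must check that the restriction and Verschiebung maps along the tower $\{W_m\mc{O}_X\}_m$ interact with Grothendieck--Serre duality so that ``finite height detected at level $m$'' and ``a level-$m$ quasi-Frobenius splitting exists'' hold for the \emph{same} integer $m$, not merely for comparable ones; part~(2) separately requires care in promoting the algebraic splitting $\phi$ to an honest flat deformation. Following Yobuko, one can alternatively carry out the comparison inside the category of quasi-coherent $\mc{O}_X$-modules equipped with a Frobenius, trading the explicit duality for more homological algebra.
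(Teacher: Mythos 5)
First, a point of record: the paper offers no proof of this statement --- it is quoted from Yobuko \cite{yobuko} and used as a black box (to deduce Theorem \ref{thm:ss} from the failure of $W_2$-liftability) --- so the comparison here is with Yobuko's argument rather than with anything in the text. Your part (1) is essentially his route: the Dieudonn\'e module of the Artin--Mazur group is $H^n(X,W\mc{O}_X)$, the criterion $\height(X)\le m$ iff $F\neq 0$ on $H^n(X,W_m\mc{O}_X)$ is exactly the van der Geer--Katsura characterization of the height, and the quasi-$F$-split side is matched to the same condition by Serre duality using $\omega_X\cong\mc{O}_X$. The ``bookkeeping'' you flag is where the actual content sits: $W_m\mc{O}_X$ is not an $\mc{O}_X$-module, so the defining square must first be read as a diagram of $W_m\mc{O}_X$-modules (with $\mc{O}_X$ a $W_m\mc{O}_X$-module via $R^{m-1}$) and duality applied to the coherent $\mc{O}_X$-module $F_*W_m\mc{O}_X$ with its Frobenius-twisted semilinear structure. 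Modulo carrying that out, your reduction of both heights to the single condition ``$F\neq 0$ on $H^n(X,W_m\mc{O}_X)$'' is correct and is the right proof of (1).

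The genuine gap is in part (2). The sentence asserting that the obstruction cocycle ``is forced to be a coboundary'' because the \v{C}ech data ``arises by pushforward from the Witt sheaf $W_h\mc{O}_X$'' is a hope, not an argument: local $W_2(k)$-lifts of $X$ and the derivations comparing them are not in any natural way maps out of $F_*W_h\mc{O}_X$, so there is nothing for $\phi$ to act on in your cochain, and the $h=1$ case you invoke as a model is itself not proved by such a direct cocycle manipulation. What is actually needed is the non-formal input that makes the classical $F$-split case work and that Yobuko generalizes: an exact sequence of coherent sheaves relating $\mc{O}_X$, $F_*W_m\mc{O}_X$, and the sheaf $B_m\Omega^1_X$ of higher Cartier boundaries, together with the identification of the $W_2$-lifting obstruction in $H^2(X,T_X)\cong H^2(X,\Omega^{n-1}_X)$ as the image of a fixed extension class under the connecting homomorphism of (the $\Omega^{n-1}$-twisted version of) that sequence. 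A quasi-$F$-splitting of level $m=\height^s(X)$ splits the sequence, kills the connecting map, and hence kills the obstruction. Without producing that sequence and that identification of the obstruction class, part (2) of your proposal does not go through.
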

By Theorem \ref{thm:yobuko} and the fact that neither of the varieties $X_3, X_5$ lift to $W_2(k)$, we have the following result:
\begin{thm}\label{thm:ss}
The varieties $Y_3, X_5$ are both supersingular.
\end{thm}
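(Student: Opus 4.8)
The varieties $Y_3, X_5$ are both supersingular.

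The plan is to combine the two previous results — Theorem \ref{thm:cyconditions}(4) and Theorem \ref{thm:cvschar3} — with the height criterion of Yobuko (Theorem \ref{thm:yobuko}) and argue by contradiction. First I would observe that both $Y_3$ and $X_5$ are strict Calabi-Yau threefolds: this is exactly the content of Theorem \ref{thm:cyconditions}(1) for $X_5$ and is asserted in Theorem \ref{thm:cvschar3} for $Y_3$. In particular the vanishing $H^i(X,\mc{O})=0$ for $i\neq 0,3$ holds, so the Artin-Mazur functor $\Phi$ is representable by a one-dimensional formal group and the notion of $\height(X)$ in Definition \ref{defn:supersingular} makes sense for each of them, as does the quasi-Frobenius splitting height $\height^s(X)$.

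Next I would invoke Theorem \ref{thm:yobuko}(1) to identify $\height(X)=\height^s(X)$ for each variety; this reduces the problem to showing $\height^s(Y_3)=\height^s(X_5)=\infty$, i.e.\ that neither height is a finite positive integer. Here the key input is Theorem \ref{thm:yobuko}(2): if the height were finite, then the variety would admit a lift to $W_2(k)$. But Theorem \ref{thm:cyconditions}(4) says precisely that $X_5$ does not lift to $W_2$, and Theorem \ref{thm:cvschar3} says $Y_3$ does not lift to $W_2$ (indeed it lifts to no ring in which $p\neq 0$). Hence in both cases the height cannot be finite, so it must be $\infty$, which by Definition \ref{defn:supersingular} is exactly the statement that $Y_3$ and $X_5$ are supersingular.

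There is essentially no obstacle in this argument — it is a direct concatenation of results already in hand — so the only thing to be careful about is bookkeeping: checking that the strict Calabi-Yau hypothesis needed to even define the Artin-Mazur formal group is verified in each case (so that Yobuko's theorem applies), and making sure the non-liftability statements cited are non-liftability to $W_2(k)$ specifically rather than some weaker or stronger assertion. Both checks are immediate from the cited theorems. I would then simply remark that this also recovers, via the Artin-Mazur group being $\hat{\mb{G}}_a$, the fourth part of Theorem \ref{thm:cvsintro} for $X_5$.
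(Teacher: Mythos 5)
Your proposal is correct and is essentially identical to the paper's argument: the non-liftability to $W_2(k)$ from Theorem \ref{thm:cyconditions}(4) and Theorem \ref{thm:cvschar3}, combined with the contrapositive of Theorem \ref{thm:yobuko}(2) and the equality of heights in Theorem \ref{thm:yobuko}(1), forces the height to be infinite. The paper states this in one line; your version merely spells out the bookkeeping (strict CY hypothesis, representability of the Artin-Mazur functor) more explicitly.
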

\begin{rmk}
Theorem \ref{thm:ss} implies that the first slope of the Newton polygon is at least 1: indeed, the part of $H^3_{\crys}(X/W)\otimes K$ with slopes in the interval $[0,1)$ correspond to the $p$-divisible quotient of the Artin-Mazur formal group \cite[Corollary 3.3]{artinmazur}. Since by Theorem \ref{thm:ss} there is no $p$-divisible quotient, there must not be any slopes in the interval $[0,1)$, as required.

This leaves two possibilities for the Newton polygons for $Y_3$ and $X_5$: either having slopes $\{1, 2\}$ or slopes $\{3/2, 3/2\}$. It is in fact possible to compute the precise Newton polygon by computing that of the corresponding Hilbert modular form, since we know that $\mc{X}$ is Hilbert-modular and the explicit modular form was found by Cynk-Sch\"utt-van Straten \cite{cynkhilbert}. 
\end{rmk}
\begin{rmk}
It is also possible to give a purely computational proof of the weaker  fact that  the action of Frobenius on the middle cohomologies of $X_3, X_5$ has no unit roots. Indeed, by a result of Stienstra, for a double cover of $\mb{P}^n$ branched along a hypersurface with equation $W=0$ the trace of Frobenius $\bmod \ p$ can be computed as a certain coefficient of a power of $W$. Since we have explicit expressions for the polynomials $W$ in each case, this is easily done with the help of a computer. For example for $X_5$ we have to compute the coefficient of $t^4x^4y^4z^4$ in $W^2$, and the answer turns out to be $60A+85$, which is divisible by 5.
\end{rmk}


\section{Conjugate spectral sequence and liftings}
In this section we recall some facts about the conjugate spectral sequence, prove a criterion for non-degeneration of the conjugate spectral sequence in terms of liftings; as a result we will be able to answer a question of Ekedahl. 
\subsection{A criterion for non-degeneration of the conjugate spectral sequence}
We state a lemma which will be of use later on. First recall the conjugate spectral sequence (see Figure \ref{fig:css}), specialized to the case of threefolds. We have only drawn the arrows which will be of particular interest to us.

\begin{figure}[H]
\begin{tikzpicture}
  \matrix (m) [matrix of math nodes,
    nodes in empty cells,nodes={minimum width=5ex,
    minimum height=5ex,outer sep=-5pt},
    column sep=1ex,row sep=1ex]{
               &   H^0(\Omega^3)     &  H^1(\Omega^3)      & H^2(\Omega^3)  &  H^3(\Omega^3) &\\
                & H^0(\Omega^2)      &   H^1(\Omega^2)  &H^2(\Omega^2)     & H^3(\Omega^2) &\\
               &  H^0(\Omega^1) &  H^1(\Omega^1)  & H^2(\Omega^1) & H^3(\Omega^1) & \\
               &  H^0(\Omega^0)  & H^1(\Omega^0) &  H^2(\Omega^0)  &  H^3(\Omega^0) &\\};
  \draw[-stealth] (m-1-2.south east) -- (m-2-4.north west) ;
  \draw[-stealth] (m-3-3.south east) -- (m-4-5.north west);
\end{tikzpicture}
\caption{$E_2$-page of the conjugate spectral sequence}
\label{fig:css}
\end{figure}
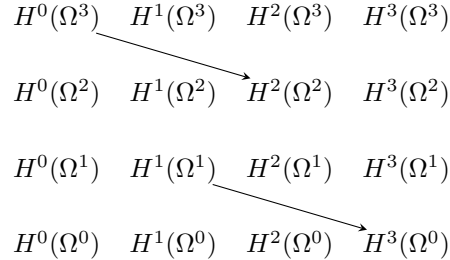

We record the following lemma, which is a direct consequence of the methods in  Deligne-Illusie \cite{deligneillusie}, and certainly well known to experts.
\begin{lem}\label{lem:diffcup}
Let $X$ be a smooth proper variety over $k$. Then each differential
\[
H^{i}(X, \Omega^j)\rightarrow H^{i+2}(X, \Omega^{j-1})
\]
in the conjugate spectral sequence (\ref{fig:css}) is given by cup product with the class $\xi\in H^2(X, \mc{T}_X)$ which is  the obstruction class to lifting $X$ over $W_2(k)$.
\end{lem}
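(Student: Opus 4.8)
The plan is to follow the strategy of Deligne--Illusie \cite{deligneillusie}, recasting the conjugate spectral sequence via the Cartier isomorphism and then identifying the relevant differential with a cup product obstruction. First I would recall that for $X/k$ smooth proper with $k$ perfect of characteristic $p$, the conjugate spectral sequence is
\[
E_2^{i,j} = H^i(X, \mathcal{H}^j(F_*\Omega^\bullet_{X/k})) \Rightarrow H^{i+j}_{\dR}(X/k),
\]
and that the Cartier isomorphism $C^{-1}\colon \Omega^j_{X'/k} \xrightarrow{\sim} \mathcal{H}^j(F_*\Omega^\bullet_{X/k})$ identifies the $E_2$-page with $\bigoplus H^i(X', \Omega^j_{X'/k})$, which after the usual Frobenius-twist bookkeeping is Figure \ref{fig:css}. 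The differential $d_2\colon E_2^{i,j}\to E_2^{i+2,j-1}$ is thus a map $H^i(X,\Omega^j)\to H^{i+2}(X,\Omega^{j-1})$, and the claim is that it is cup product with the class $\xi \in H^2(X,\mathcal{T}_X)$ obstructing a lift of $X$ to $W_2(k)$.

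The key step is the local-to-global analysis exactly as in Deligne--Illusie. Choose an affine open cover $\{U_\alpha\}$ of $X$ together with lifts $\tilde{U}_\alpha$ over $W_2(k)$ (automatic since affines lift) and lifts $\tilde{F}_\alpha\colon \tilde{U}_\alpha \to \tilde{U}'_\alpha$ of the relative Frobenius. The failure of the $\tilde{F}_\alpha$ to glue is measured by a \v{C}ech $1$-cocycle with values in $\mathcal{T}_{X'}$ (after dividing by $p$), and its coboundary-type obstruction — the failure to patch the local quasi-isomorphisms $\Omega^\bullet_{X'}[-\bullet] \to \tau_{<p}F_*\Omega^\bullet_X$ into a global one — is precisely what records $\xi$. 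Tracking this through the spectral sequence of the filtered complex $\tau_{<p}F_*\Omega^\bullet_X$ shows that $d_2$ on $E_2^{i,j}$ is given by the connecting map, which on cohomology sheaves is cup product with the class in $H^1(X',\mathcal{T}_{X'})$ (resp.\ its \v{C}ech representative) that one checks coincides, up to the identification $X'\cong X$ twisted by Frobenius, with the $W_2$-obstruction class $\xi \in H^2(X,\mathcal{T}_X)$. I would invoke that $\xi$ is the obstruction class in the standard sense (it lives in $H^2$, not $H^1$, once one accounts for the fact that we need to lift the variety, not just a morphism); the matching of the \v{C}ech cocycle produced by the $\tilde{F}_\alpha$ with this obstruction class is the content of \cite[Section 3]{deligneillusie} and \cite[Remarque 4.2]{deligneillusie} (or Illusie's exposition), which I would cite rather than reprove.

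The main obstacle is bookkeeping rather than conceptual: one must be careful about (i) the Frobenius twist $X' = X\times_{k,\mathrm{Frob}}k$ versus $X$ itself — over a perfect field this is harmless but the identification of tangent cohomology classes must be made compatibly — and (ii) the precise normalization ensuring that the differential is cup product with $\xi$ exactly (no sign, no Frobenius-linear twist spoiling the statement), which for our application only matters insofar as $\xi = 0$ iff $X$ lifts to $W_2(k)$, so vanishing is what is used. Since we only need the qualitative statement ``$d_2 = {\cup}\,\xi$'' — and in particular that $d_2$ vanishes identically precisely when $\xi = 0$ — I would state the lemma with this level of precision and refer to \cite{deligneillusie} for the cocycle-level identification, emphasizing that the argument is the transpose/conjugate-filtration version of the degeneration argument there. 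A reader wanting full details is pointed to loc.\ cit.; for us the upshot is that when $X$ does not lift to $W_2(k)$, some differential $H^i(\Omega^j)\to H^{i+2}(\Omega^{j-1})$ is nonzero, which is what feeds into Corollary \ref{cor:w2lift}.
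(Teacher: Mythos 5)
Your overall route is the same as the paper's: both arguments view the conjugate spectral sequence as the spectral sequence of the canonical truncation filtration on $F_*\Omega^\bullet_{X/k}$, identify the $d_2$ differentials with connecting maps coming from the two-step extensions $\tau_{[j-1,j]}$, and appeal to Deligne--Illusie for the identification of the relevant extension class with the $W_2$-obstruction. (You are in fact more careful than the paper about the Frobenius twist $X'$ and the Cartier isomorphism, which is welcome.) However, there is one genuine missing step. What Deligne--Illusie actually supply (their Proposition 3.3 together with Theorem 3.5, as the paper cites) is the identification of the extension class of $\tau_{\leq 1}F_*\Omega^\bullet$, i.e.\ the class in $\Ext^2(\Omega^0,\Omega^1)$ --- equivalently the single differential $H^i(\Omega^1)\to H^{i+2}(\Omega^0)$ --- with $\xi$. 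Your write-up treats this as if it directly yields the statement for every $(i,j)$, but the differential out of $H^i(\Omega^j)$ for $j\geq 2$ is governed by a class in $\Ext^2(\Omega^j,\Omega^{j-1})$, and one must explain why that class is the image of $\xi$ under the contraction $\mathcal{T}_X\to\mathcal{H}om(\Omega^j,\Omega^{j-1})$. The paper handles exactly this point using the multiplicative structure of the truncation filtration: the product map $\Omega^1[-j+1]\otimes \tau_{[0,1]}\to\tau_{[j-1,j]}$ shows that the composite $\Omega^1\otimes\Omega^{j-1}\to\Omega^j\to\Omega^{j-1}[2]$ is $(\cup\,\xi)\otimes\mathrm{id}_{\Omega^{j-1}}$, and the surjectivity of the wedge map $\Omega^1\otimes\Omega^{j-1}\twoheadrightarrow\Omega^j$ then pins down the class. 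You need some version of this reduction; without it the lemma is only proved for $j=1$, which happens not to suffice for the application in Lemma \ref{lem:w2lift} (where the differential $H^1(\Omega^1)\to H^3(\Omega^0)$ is the $j=1$ case, but the companion differential $H^0(\Omega^3)\to H^2(\Omega^2)$ used elsewhere in the paper is not).

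A smaller point: your sentence asserting that the connecting map is ``cup product with the class in $H^1(X',\mathcal{T}_{X'})$'' is off by a degree --- a $d_2$ differential raises cohomological degree by $2$, so it must be induced by a class in $H^2$, namely $\xi\in H^2(X,\mathcal{T}_X)$; the $\mathcal{T}$-valued \v{C}ech $1$-cochains measuring the discrepancies of local Frobenius lifts only assemble into a $1$-cocycle when $X$ does lift to $W_2(k)$ (that is the degeneration case), whereas here it is the \v{C}ech $2$-cocycle obstructing the existence of compatible lifts that enters. You do acknowledge this in a parenthesis, but the argument should be phrased with the degree-$2$ class from the start.
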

\begin{proof}
We recall some basic facts about the conjugate spectral sequence. By definition it is the spectral sequence induced by the filtration on the de Rham complex $\Omega^{\bullet}$ given by 

\[
F^p\Omega^{\bullet}\defeq \tau_{\leq -p}\Omega^{\bullet};
\]
we denote the associated graded sheaves by $\gr^p\defeq F^p/F^{p+1}$, which in this case is given by 
\[
\gr^p=\Omega^{-p}[p].
\]
By construction, each  differential on the $E_1$-page of the spectral sequence (this is before we perform the reindexing which makes the conjugate spectral sequence into an $E_2$-spectral sequence)
\[
H^{p+q}(\gr^p)\rightarrow H^{p+q+1}(gr^{p+1})
\]
is the connecting homomorphism induced  by the sequence  
\[
0\rightarrow \gr^{p+1} \rightarrow F^p/F^{p+2}\rightarrow \gr^p \rightarrow 0.
\]
Equivalently the differential $d$ is induced by a class in $\Ext^2(\Omega^{i+1}, \Omega^i)$, or equivalently a map
\[
\Omega^{i+1}\rightarrow \Omega^i[2].
\]
Note that in our case $H^{p+q}(\gr^p)=H^{q+2p}(\Omega^{-p})$. Now Deligne-Illusie \cite{deligneillusie} shows that in the case $i=0$   this map is induced by the obstruction class $\xi$: this is the combination of Theorem 3.5 and Proposition 3.3 of loc.cit.. It remains to show that the same is true for other values of $i$: that is we would like to compute the map $\alpha: \Omega^{i+1}\rightarrow \Omega^i[2]$. Now note that we have the multiplication map 
\[
\Omega^1[-i]\otimes F^0/F^2\rightarrow F^i/F^{i+2},
\]
and since the differentials for higher $i$ (resp. $i=0$) are induced by the extension $F^i/F^{i+2}$ (resp. $F^0/F^{2}$), we have that  the composition 
\begin{equation}\label{eqn:compose}
\Omega^1\otimes \Omega^i\xrightarrow{m} \Omega^{i+1}\xrightarrow{\alpha} \Omega^i[2]
\end{equation}
is given by tensoring the map  $\cdot \cup \xi: \Omega^0\rightarrow \Omega^1[2]$   by $\Omega^i$. Finally the first map $m$ in (\ref{eqn:compose}) above is surjective, and so we are done. 
\end{proof}

\begin{lem}\label{lem:w2lift}
If a Calabi-Yau threefold $X$ over a field $k$ of characteristic $p$ has no lift to $W_2(k)$, then the differenial in the conjugate spectral sequence $H^1(\Omega^1) \rightarrow H^3(\Omega^0)$ is non-trivial.  
\end{lem}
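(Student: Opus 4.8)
The plan is to use Lemma~\ref{lem:diffcup} to reduce the claim to a statement purely about cup product with the obstruction class $\xi \in H^2(X, \mc{T}_X)$, and then to run through the conjugate spectral sequence for a Calabi-Yau threefold to show that if \emph{this particular} differential $H^1(\Omega^1) \to H^3(\Omega^0)$ vanished, then \emph{all} the differentials would vanish, forcing the conjugate spectral sequence to degenerate and hence (by the general deformation-theoretic criterion, e.g.\ the argument of Deligne--Illusie) producing a lift to $W_2(k)$, contrary to hypothesis. So the argument will be by contradiction.

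First I would use the strict Calabi-Yau hypothesis to pin down the relevant $E_2$-terms. Since $\Omega^3 \cong \mc{O}_X$, Serre duality identifies $H^i(\Omega^3) \cong H^{3-i}(\mc{O})^\vee$, and the strict CY condition $H^1(\mc{O}) = H^2(\mc{O}) = 0$ kills all of $H^1(\Omega^3), H^2(\Omega^3)$ as well as $H^1(\Omega^0), H^2(\Omega^0)$. Similarly $\mc{T}_X \cong \Omega^2_X$ (using the trivialization of $\Omega^3$), so $\xi$ may be regarded as a class in $H^2(X, \Omega^2)$, and cup product with $\xi$ on the row $H^\bullet(\Omega^j)$ is the Serre-dual of cup product with $\xi$ on $H^\bullet(\Omega^{3-j})$. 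Given this, the only potentially nonzero differentials emanating from the $E_2$-page of Figure~\ref{fig:css} are $H^0(\Omega^2) \to H^2(\Omega^1)$, $H^1(\Omega^2) \to H^3(\Omega^1)$, $H^0(\Omega^1) \to H^2(\Omega^0)$, and $H^1(\Omega^1) \to H^3(\Omega^0)$ (the first being Serre-dual to the last, and the second to the third, up to the identifications above). The next step is the observation that, by the abutment, the conjugate spectral sequence degenerates at $E_2$ if and only if $H^n_{\dR}(X)$ has the ``right'' dimension $\sum_{i+j=n} h^{ij}$ for all $n$, and since for a CY threefold the only place where a discrepancy can occur (after accounting for the vanishings above) is in degree $3$, degeneration is equivalent to the single differential $H^1(\Omega^1) \to H^3(\Omega^0)$ being zero together with its Serre-dual $H^0(\Omega^2) \to H^2(\Omega^1)$ being zero --- but these two are Serre-dual and hence vanish simultaneously. (I would double-check that $H^0(\Omega^2)\to H^2(\Omega^1)$ really is dual to $H^1(\Omega^1)\to H^3(\Omega^0)$: both are cup product with $\xi \in H^2(\Omega^2)$, and Serre duality pairs $H^0(\Omega^2)$ with $H^3(\Omega^1)$ and $H^1(\Omega^1)$ with $H^2(\Omega^2)$... this needs the compatibility of cup product with the duality pairing, which is standard.)

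Assembling this: suppose for contradiction that the differential $H^1(\Omega^1) \to H^3(\Omega^0)$ is zero. By the duality observation its Serre dual also vanishes, and since (as catalogued above) every other $E_2$-differential automatically vanishes for dimension/vanishing reasons, the conjugate spectral sequence degenerates at $E_2$. Then I invoke the deformation-theoretic consequence of degeneration: the argument of Deligne--Illusie (the same circle of ideas underlying Lemma~\ref{lem:diffcup}) shows that degeneration of the conjugate spectral sequence --- equivalently, the vanishing of all the $\cup \xi$ maps --- combined with the Calabi-Yau hypothesis forces $\xi = 0$ itself, because $\xi$ can be recovered from the differential $H^0(\mc{O}) \to H^2(\Omega^1)$ (i.e.\ $H^0(\Omega^3)\cong H^0(\mc{O})$ mapping via $\cup\xi$), and on a CY threefold the cup product map $H^0(\mc{O}) \otimes H^2(\mc{T}_X) \to H^2(\mc{T}_X)$ is just the identity on $H^2(\mc{T}_X) = H^2(\Omega^2)$ after the trivialization --- so this particular differential being zero already says $\xi = 0$. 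Hence $X$ lifts to $W_2(k)$, contradicting the hypothesis, and the differential must be nonzero.

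The main obstacle I anticipate is the bookkeeping in the previous paragraph: I have been slightly cavalier about \emph{which} differential directly detects $\xi$. The cleanest route is probably to note that the edge map in degree $3$ of the conjugate spectral sequence sits in an exact sequence involving $H^1(\Omega^1) \to H^3(\mc{O})$, and that $H^3(\mc{O}) \cong H^0(\Omega^3)^\vee \cong k$ is one-dimensional; so the differential is either zero or surjective. If it is surjective, we are done. If it is zero, then I need the fact that this forces $\xi=0$ --- and the honest way to see that is to track $\xi$ through $H^2(\mc{T}_X)\cong H^2(\Omega^2)$, observe that $H^2(\Omega^2)\cong H^1(\Omega^1)^\vee$ by Serre duality (using $h^{11}=h^{22}$, which holds here), and use that the differential out of $H^1(\Omega^1)$ and the differential into $H^2(\Omega^2)$ are dual pairings against $\xi$; vanishing of one then gives $\langle \cdot, \xi\rangle = 0$ as a pairing, hence $\xi = 0$ by non-degeneracy of Serre duality. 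This is where I would spend the most care, making sure the identification of $\mc{T}_X$ with $\Omega^2$, the Serre-duality pairings, and the cup-product description from Lemma~\ref{lem:diffcup} are all mutually compatible.
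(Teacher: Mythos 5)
Your final paragraph is exactly the paper's proof: by Lemma~\ref{lem:diffcup} the differential is cup product with the obstruction class $\xi\in H^2(\mc{T}_X)\cong H^2(\Omega^2)$ (using the Calabi--Yau trivialization of $\Omega^3$), and since the pairing $H^1(\Omega^1)\otimes H^2(\Omega^2)\to H^3(\Omega^3)\cong H^3(\Omega^0)$ is non-degenerate by Serre duality, $\xi\neq 0$ forces the differential to be non-trivial. The intermediate detour through degeneration of the entire conjugate spectral sequence --- and in particular the claim that degeneration would by itself produce a $W_2$-lift, which is not a theorem --- is unnecessary and should be deleted in favor of the direct argument you arrive at anyway.
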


\begin{proof}

By Lemma \ref{lem:diffcup}  the  differential  is given by cupping with the  class $\xi \in H^2(T_X)$ which is the obstruction to lifting to $W_2(k)$. But now by the Calabi-Yau condition we have 
\[
\xi \in H^2(\Omega^2)=H^2(T_X),
\]
(we will continue to denote its image in $H^2(\Omega^2)$ by $\xi$) and consequently the differential in the spectral sequence is the same as the one induced by cupping with $\xi$ in the map 
\begin{equation}\label{eqn:cup}
H^1(\Omega^1)\otimes H^2(\Omega^2)\rightarrow H^3(\Omega^3)\cong H^3(\Omega^0),
\end{equation}
where the last isomorphism is again by the Calabi-Yau condition.
On the other hand by Serre duality the pairing (\ref{eqn:cup}) is non-degenerate, and hence the differential in question is non-trivial as claimed.
\end{proof}

\subsection{The Hirokado threefold}
The Hirokado threefold $\mc{H}$ was constructed  by Hirokado \cite{hirokado}  as the first example of a Calabi-Yau variety in positive characteristic which does not admit a lift to characteristic zero; the $p$-adic cohomologies (for $p=3$, since the Hirokado threefold is in characteristic three) were further investigated by Ekedahl in \cite{ekedahl} (this variety was denoted by $X$ by both Hirokado and Ekedahl, but we have opted to denote it by $\mc{H}$ so as not to confuse it with the other varieties considered in this paper). However in loc.cit. the computation of the crystalline cohomology could not be  completed: we state the following theorem of Ekedahl:

\begin{thm}[Theorem 4.2 of \cite{ekedahl}]\label{thm:ekedahl}\hfill
\begin{enumerate}
    \item The Hirokado threefold $\mc{H}$ does not admit a lift to $W/9$;
\item 
the crystalline cohomology of $X$ is given by 
\begin{align}\begin{split}
&H^0_{\crys}(\mc{H}/W) = H^6_{\crys}
(\mc{H}/W) = W,\\
&H^1_{\crys}
(\mc{H}/W) = H^5_{\crys}
(\mc{H}/W) = 0, \\
&H^2_{\crys}(\mc{H}/W) = W^{41}, \ H^3_{\crys}
(\mc{H}/W) = W/p^nW, \\
\text{and} \ &H^4_{\crys}
(\mc{H}/W) =
W/p^nW\oplus W^{41}.
\end{split}
\end{align}
Furthermore the Hodge-de Rham spectral
sequence  degenerates at the $E_1$-page  precisely when $n > 0$.
\end{enumerate}
\end{thm}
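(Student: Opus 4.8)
Since the statement is quoted verbatim from \cite[Theorem 4.2]{ekedahl}, the honest ``proof'' is a reference to loc.\ cit.; I will instead sketch the skeleton of Ekedahl's argument and isolate the one place where it genuinely stops short. The first ingredient is Ekedahl's geometric model: identifying the Hirokado threefold $\mc{H}$ with a Deligne--Lusztig variety (more precisely, with a suitable resolution of one) in characteristic $3$, a description explicit enough that every Hodge cohomology group $H^i(\mc{H},\Omega^j)$ can be computed directly. From the resulting Hodge diamond one reads off the $\ell$-adic Betti numbers $b_0=b_6=1$, $b_1=b_5=0$, $b_2=b_4=41$ and, crucially, $b_3=0$; combining this with the strict Calabi--Yau vanishings $H^1(\mc{H},\mathcal{O})=H^2(\mc{H},\mathcal{O})=0$ already pins down the torsion-free parts of the crystalline cohomology as $W,\,0,\,W^{41}$ in degrees $0,1,2$ together with their Poincar\'e duals in degrees $6,5,4$.

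The delicate part is the torsion in the middle degrees. The bookkeeping is done by the crystalline universal coefficients sequence
\[
0 \to H^i_{\crys}(\mc{H}/W)\otimes_W k \to H^i_{\dR}(\mc{H}) \to \mathrm{Tor}_1^W(H^{i+1}_{\crys}(\mc{H}/W),k)\to 0,
\]
together with the inequality $\sum_{i+j=n}\dim_k H^i(\mc{H},\Omega^j)\ge \dim_k H^n_{\dR}(\mc{H})$, which for every $n$ is an equality precisely when the Hodge--de Rham spectral sequence degenerates at $E_1$. Since $b_3=0$ and the Hodge numbers are known (in particular $h^{3,0}=h^{0,3}=1$), these constraints force the torsion of $H^3_{\crys}$ and of $H^4_{\crys}$ to be cyclic of one common length $W/p^nW$ (with $H^4_{\crys}\cong W/p^nW\oplus W^{41}$), and one checks that $n>0$ holds if and only if the Hodge--de Rham spectral sequence degenerates. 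This is the last sentence of the theorem, and it completes the proof of everything except the actual value of $n$.

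The main obstacle is exactly that numerical value: because $\dim_k H^3_{\dR}(\mc{H})$ itself depends on $n$ through the sequence above, the dichotomy ``$n>0$ iff degeneration'' cannot be resolved by the Hodge and Betti data alone, and this is precisely the point Ekedahl leaves open. I would not try to settle it by a brute-force de Rham--Witt computation on the Deligne--Lusztig model; instead I would feed part~(1) of the theorem --- that $\mc{H}$ does not lift to $W/9=W_2(\mb{F}_3)$ --- into the criterion developed earlier in this paper. By Lemma \ref{lem:w2lift} the conjugate spectral sequence of $\mc{H}$ carries a nonzero differential $H^1(\Omega^1)\to H^3(\mathcal{O})$, so it does not degenerate at $E_2$; for a smooth proper threefold the conjugate and Hodge--de Rham spectral sequences have the same $E_1$-term dimensions and the same abutment $H^*_{\dR}(\mc{H})$, so one degenerates if and only if the other does. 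Hence the Hodge--de Rham spectral sequence of $\mc{H}$ does not degenerate, and by the dichotomy this pins down $n=0$, i.e.\ $H^3_{\crys}(\mc{H}/W)=0$ and $H^4_{\crys}(\mc{H}/W)=W^{41}$ --- which is the conclusion recorded in Corollary \ref{cor:w2lift}.
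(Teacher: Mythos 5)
The paper offers no proof of this statement---it is quoted verbatim from Ekedahl's Theorem 4.2 with a citation---and you correctly treat it as such; your sketch of Ekedahl's Deligne--Lusztig computation is a reasonable gloss on the cited source (though strictly one cannot ``read off'' Betti numbers from the Hodge diamond in characteristic $p$ without already knowing degeneration, which is exactly the open point). Your resolution of the remaining ambiguity---feeding the non-liftability to $W/9$ into Lemma~\ref{lem:w2lift} to get a nonzero conjugate differential $H^1(\Omega^1)\to H^3(\mc{O})$, hence non-degeneration of Hodge--de Rham and thus $n=0$---is exactly the argument the paper gives in the theorem immediately following this one.
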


The precise value of $n$ was not determined, however. Here we show that Lemma \ref{lem:w2lift} implies that $n=0$, which completes the computation of all cohomologies of $X$ (including the de-Rham Witt cohomology, which was also studied in detail in \cite{ekedahl}).

\begin{thm}
The crystalline cohomology of $\mc{H}$ is torsion free: that is, the value of $n$ in Theorem \ref{thm:ekedahl} is zero.
\end{thm}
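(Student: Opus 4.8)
The plan is to reduce the claim to Lemma~\ref{lem:w2lift}: Ekedahl (Theorem~\ref{thm:ekedahl}(1)) showed that $\mc{H}$ does not lift to $W/9 = W_2(k)$, so by Lemma~\ref{lem:w2lift} the differential $H^1(\mc{H},\Omega^1)\to H^3(\mc{H},\mc{O})$ in the conjugate spectral sequence is non-trivial. Hence the conjugate spectral sequence does \emph{not} degenerate at $E_2$ for $\mc{H}$.

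Next I would invoke the standard duality between degeneration of the Hodge-de Rham (``first'') spectral sequence and degeneration of the conjugate spectral sequence. Over a perfect field of characteristic $p$, for a smooth proper variety the Hodge-de Rham spectral sequence degenerates at $E_1$ if and only if the conjugate spectral sequence degenerates at $E_2$ (both are equivalent, by counting dimensions, to the equality $\sum_{i+j=n}\dim_k H^i(X,\Omega^j) = \dim_k H^n_{\dR}(X)$ for all $n$, which is symmetric in the two spectral sequences since both compute $H^*_{\dR}$ and both have $E$-terms the Hodge cohomology groups — the conjugate one after the Cartier-isomorphism reindexing). Since we have just shown the conjugate spectral sequence is non-degenerate for $\mc{H}$, the Hodge-de Rham spectral sequence is also non-degenerate for $\mc{H}$. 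By the last sentence of Theorem~\ref{thm:ekedahl}(2), the Hodge-de Rham spectral sequence degenerates at $E_1$ precisely when $n>0$; therefore $n=0$, i.e. $H^3_{\crys}(\mc{H}/W)$ and the torsion summand of $H^4_{\crys}(\mc{H}/W)$ vanish, so the crystalline cohomology of $\mc{H}$ is torsion free.

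The only real content beyond citing Lemma~\ref{lem:w2lift} and Theorem~\ref{thm:ekedahl} is the equivalence ``conjugate SS degenerates at $E_2$'' $\iff$ ``Hodge-de Rham SS degenerates at $E_1$'', so I would spell that out: both spectral sequences converge to $H^*_{\dR}(X/k)$, and in both cases the relevant page's $E$-terms are exactly the groups $H^i(X,\Omega^j)$ (for the conjugate SS this uses the Cartier isomorphism $E_2^{i,j}=H^i(X,\mc{H}^j(\Omega^\bullet))\cong H^i(X,\Omega^j)$). Thus in either SS, $\sum_{i+j=n}\dim_k H^i(X,\Omega^j) \ge \dim_k H^n_{\dR}(X)$ always, with equality for all $n$ exactly when the respective SS degenerates; since the left side and the right side are the same in both cases, one degenerates iff the other does. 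I do not expect a genuine obstacle here — this is a well-known fact (cf. Deligne-Illusie, or Katz's treatment of the conjugate filtration) — but it is the step that must be written carefully, since everything else is a direct quotation of results already established above.
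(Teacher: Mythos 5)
Your proof is correct and follows exactly the same route as the paper: invoke Ekedahl's non-liftability of $\mc{H}$ to $W/9$, apply Lemma~\ref{lem:w2lift} to get a non-zero differential in the conjugate spectral sequence, pass to non-degeneration of the Hodge-de Rham spectral sequence, and conclude $n=0$ from the last part of Theorem~\ref{thm:ekedahl}. Your extra care in justifying the equivalence between degeneration of the conjugate spectral sequence at $E_2$ and of the Hodge-de Rham spectral sequence at $E_1$ (via the dimension count $\sum_{i+j=n}\dim H^i(\Omega^j)=\dim H^n_{\dR}$) is a step the paper asserts without comment, and your justification is the standard correct one.
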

\begin{proof}
By Theorem \ref{thm:ekedahl}, $\mc{H}$ does not lift to $W/9$, and hence by Lemma \ref{lem:w2lift} the differential $H^1(\Omega^1)\rightarrow H^3(\Omega^0)$ in the conjugate spectral sequence is non-zero. In particular, the conjugate spectral sequence does not degenerate at $E_2$, and hence the Hodge-de Rham spectral sequence does not degenerate at $E_1$. Therefore by the last part of Theorem \ref{thm:ekedahl} we have $n=0$, as claimed.
\end{proof}


\section{Topology}\label{section:topology}
In this section let $\mc{X}/\mc{O}$ denote the Cynk-van Straten threefold over the ring $\mc{O}=\mb{Z}\big[\frac{-1+\sqrt{5}}{2}\big]$, and $X$ its generic fiber. The goal of this section is to prove Proposition \ref{prop:topology} below, which will be crucial when we compute the cohomologies of the special fiber $X_5$.
\begin{prop}\label{prop:topology}
The first homology  $H_1(X, \mb{Z})$  is 2-torsion (which may or may not be vanishing). 
In particular, w have $H_1(X, \mb{F}_5)=0$. 
\end{prop}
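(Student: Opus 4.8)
The plan is to understand the topology of $X$ by tracking through the explicit construction of the Cynk--van Straten threefold as a resolution of a double cover of $\mathbf{P}^3$ branched along a union of eight hyperplanes. The rough strategy: show that the double cover $\pi\colon X' \to \mathbf{P}^3$ before resolution has fundamental group that is at most $2$-torsion in its abelianization (this is essentially forced by $X'$ being a double cover, so $\pi_1$ surjects onto $\mathbf{Z}/2$ with kernel controlled by $\pi_1$ of the complement of the branch locus), and then argue that the blow-ups along the $28$ double lines and $9$ fourfold points, which are the operations producing the smooth model $\mathcal{X}$, do not change $\pi_1$ (blowing up a smooth center in a smooth variety, or more precisely resolving these singularities, is a birational modification that is an isomorphism in codimension one and does not affect the fundamental group of a projective variety over $\mathbf{C}$, since one can use that $\pi_1$ is a birational invariant of smooth projective varieties, or directly that the exceptional loci are simply connected or connected fibers).

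Concretely, first I would recall that $X$ is obtained by resolving $X'$, the double cover of $\mathbf{P}^3$ branched along the octic $D$. The key point is van Kampen applied to the double cover: writing $U = \mathbf{P}^3 \setminus D$, the smooth locus of $X'$ over $U$ is a connected double cover, so $\pi_1$ of that locus is an index-two subgroup of $\pi_1(U)$. Since $\mathbf{P}^3$ is simply connected and $D$ is a hypersurface, $\pi_1(U)$ is cyclic — in fact generated by a loop around $D$, and for a degree-eight $D$ it is a quotient of $\mathbf{Z}$; but crucially the double cover is the one associated to the index-two subgroup, so after passing up to $X'$ and filling in (the branch divisor gets a connected preimage, killing the generator up to its square), one finds $H_1(X', \mathbf{Z})$ is a quotient of $\mathbf{Z}/2$. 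Then I would invoke that the resolution $X \to X'$ induces an isomorphism on $\pi_1$: the centers of the blow-ups (double lines, fourfold points) have simply connected exceptional divisors and connected fibers, so by the standard lemma on $\pi_1$ of blow-ups (or Fulton's criterion / the fact that small modifications and blow-ups along nice centers preserve $\pi_1$ for projective varieties) we get $\pi_1(X) \cong \pi_1(X')$, hence $H_1(X, \mathbf{Z})$ is $2$-torsion. The final clause $H_1(X, \mathbf{F}_5) = 0$ is then immediate from the universal coefficients theorem since a $2$-torsion group tensored with $\mathbf{F}_5$ and $\mathrm{Tor}$ with $\mathbf{F}_5$ both vanish.

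The main obstacle I anticipate is making the blow-up/resolution step rigorous: one must check carefully that the particular singularities of $X'$ over the double lines and fourfold points of the hyperplane arrangement are resolved by centers whose exceptional loci do not contribute to $\pi_1$ — the double cover is ramified along $D$, so the preimages of the double lines and fourfold points in $X'$ are singular in a way that needs to be analyzed (e.g. the preimage of a double line is a curve of $A_1$ singularities, and its resolution has $\mathbf{P}^1$-fibers, while the preimage of a fourfold point is a more degenerate singularity). I would handle this by using that all exceptional divisors introduced are rational (hence simply connected) and the resolution map has connected fibers, so that $\pi_1$ is unchanged; alternatively, one can argue topologically that $X$ and $X'$ are homeomorphic away from codimension $\geq 2$ and that the local fundamental groups of the singularities of $X'$ are trivial (quotient singularities of type $A_1$ and their analogues have trivial local $\pi_1$ after the relevant identifications), so the resolution does not change the global $\pi_1$. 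A secondary subtlety is confirming that the double cover is genuinely connected (so that its $\pi_1$ is index two rather than the cover being disconnected), which follows because $D$ is reduced and the square root class is non-trivial.
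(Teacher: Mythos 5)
There is a genuine gap at the very first step of your covering-space argument. You assert that for the octic branch divisor $D \subset \mathbf{P}^3$ the group $\pi_1(\mathbf{P}^3\setminus D)$ is cyclic, ``generated by a loop around $D$,'' and a quotient of $\mathbf{Z}$. That is only true when $D$ is irreducible. Here $D$ is a union of \emph{eight} hyperplanes, so $H_1(\mathbf{P}^3\setminus D,\mathbf{Z})$ is generated by the eight meridians $\gamma_1,\dots,\gamma_8$ subject to the single relation $\gamma_1+\cdots+\gamma_8=0$, i.e.\ it is free of rank $7$ (and $\pi_1$ itself is typically non-abelian for an arrangement with this incidence structure). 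The double cover corresponds to the character sending every $\gamma_i$ to $-1$; its kernel $K$ has index two but its abelianization is not controlled by your argument, and filling in the ramification divisor only kills the classes $\gamma_i^2$. So the conclusion that $H_1(X',\mathbf{Z})$ is a quotient of $\mathbf{Z}/2$ does not follow; establishing it along these lines would require an actual presentation of $\pi_1$ of the arrangement complement (e.g.\ via Zariski--van Kampen), which is real work that your sketch does not do. The secondary step (that resolving the $A_1$-type singularities over the double lines and the fourfold points does not change $\pi_1$) is plausible but moot given this error.

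The paper sidesteps the arrangement complement entirely: it uses the model in which one \emph{first} blows up $\mathbf{P}^3$ along the $28$ double lines and $9$ fourfold points to get $Y$ (so $H_1(Y,\mathbf{Z})=0$ and the strict transform $D\subset Y$ is smooth), and \emph{then} takes the smooth double cover $\pi\colon X\to Y$ branched along $D$. It then compares the Mayer--Vietoris sequences for the decompositions $X=(X\setminus D)\cup D'_\epsilon$ and $Y=(Y\setminus D)\cup D_\epsilon$ with $\mathbf{F}_l$-coefficients for each odd prime $l$, using that $X\setminus D\to Y\setminus D$ is an \emph{unramified} double cover and hence (by transfer) induces isomorphisms on $H_1(-,\mathbf{F}_l)$ for $l\neq 2$; surjectivity of $H_1(\Int)\to H_1(Y\setminus D)$ then forces $H_1(X,\mathbf{F}_l)=0$, and combined with $b_1(X)=0$ this gives the $2$-torsion statement. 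If you want to salvage your approach, you should either switch to this ``blow up first'' model, or replace the cyclicity claim by an honest computation of $H_1$ of the double cover of the arrangement complement.
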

\begin{proof}
Recall that $X$ is constructed by first blowing up $\mb{P}^3$ at 28 double curves and 9 fourfold points to get the threefold $Y$, and then taking the double cover of $Y$ branched along the smooth divisor $D$, which is the strict transform of the original collection of eight hyperplanes.

Now $H_1(Y, \mb{Z})=0$ since it comes from blowing up points and lines so that the exceptional divisors have no $H_1$. Now consider the double cover $\pi: X\rightarrow Y$ ramified along $D\subset Y$.

For each prime $l\neq 2$, we will compute $H_1(X, \mb{F}_l)$ and $H_1(Y, \mb{F}_l)$ using the Mayer-Vietoris exact sequence.  Since the first Betti number of $X$ is zero, we will have proven the  proposition if we can show that $H_1(X, \mb{F}_l)=0$.

Note that the preimage $\pi^{-1}(D)$ is isomorphic to $D$ since the cover is branched along $D$, so we will abuse notation and write $D$ for this preimage inside $X$ as well. Now for our application of Mayer-Vietoris we write  
\begin{align}
    X=(X\setminus D)&\cup D_{\epsilon}',\\
    Y=(Y\setminus D)&\cup D_{\epsilon},
\end{align}
where $D_{\epsilon}'$ (respectively $D_{\epsilon}$) is a tubular neighborhood of $D$ inside $X$ (respectively $Y$). Also denote by the intersection of $X\setminus D$ and $D_{\epsilon}'$ (respectively the intersection of $Y\setminus D$ and $D_{\epsilon}$) by $\Int'$  (respectively $\Int$). We then have the following  exact sequences from Mayer-Vietoris (with $\mb{F}_l$-coefficients which we omit from the notation):

\begin{equation}\label{eqn:MV}
\begin{tikzcd}
  \cdots \arrow[r] & H_1(\Int') \arrow[d, "\alpha"] \arrow[r, "f'"] & H_1(X\setminus D)\oplus H_1(D_{\epsilon}') \arrow[d, "\beta"] \arrow[r, "g'"] & H_1(X) \arrow[d, "\gamma"] \arrow[r] & H_0(\Int') \arrow[d] \arrow[r] &\cdots\\
  \cdots  \arrow[r] & H_1(\Int) \arrow[r, "f"] & H_1(Y\setminus D)\oplus H_1(D_{\epsilon}) \arrow[r, "g"] & H_1(Y) \ar[r] & H_0(\Int)\arrow[r]&\cdots 
\end{tikzcd}
\end{equation}

Note that $H_1(D_{\epsilon}')=H_1(D_{\epsilon})=0$, since both $D_{\epsilon}'$ and $D_{\epsilon}$ deformation retract onto $D$ which is a disjoint union of $\mb{P}^2$'s. Now any torsion in $H_1(X, \mb{Z}) $ must come from $H_1(X\setminus D)$, since $H_0(\Int')$ is torsion free. Note also that $H_1(Y)=0$ since it is the blow up of $\mb{P}^3$ at points and lines, and all the exceptional divisors have vanishing $H_1$.

So we can restrict attention to the following part of diagram \ref{eqn:MV}:
\begin{equation}\label{eqn:sqmv}
    \begin{tikzcd}
     H_1(\Int')\arrow[r, "f'"] \arrow[d, "\alpha"] & H_1(X\setminus D)\arrow[d, "\beta"]\\
     H_1(\Int)\arrow[r, "f"]  & H_1(Y\setminus D)
    \end{tikzcd}
\end{equation}
By construction the map $\pi: X\setminus D \rightarrow Y\setminus D$ is an unramified double cover and therefore the maps $\alpha$  and $\beta$ are both  isomorphisms as long as we use homology with $\mb{F}_l$-coefficients for $l\neq 2$.  

Since the bottom horizontal map in \ref{eqn:sqmv} is surjective,   $f'$ is as well, and therefore $H_1(X, \mb{F}_l)=0$, as required. 

\end{proof}

\section{The CvS variety in characteristic 3}\label{section:cvs3}
\begin{thm}\label{thm:cvsy3}
The Calabi-Yau threefold $Y_3$ has $H^3_{crys}=0$, and hence vanishing third Betti number.
\end{thm}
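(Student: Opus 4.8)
The plan is to combine the topological input we have in characteristic zero with the integral $p$-adic comparison results (Caruso–Faltings) that are the running theme of the paper, exactly as in the treatment of the Godeaux and CvS-in-characteristic-5 cases. First I would recall the setup: $Y_3/\mb{F}_3$ arises, by Cynk--van Straten, from a degree-eight hyperplane arrangement whose double octic has bad reduction at $3$, so that one must pass to a small resolution; importantly, however, $Y_3$ does admit an integral model $\mc{Y}_3$ over (a $3$-adic completion of) a number ring, whose generic fiber $Y_{3,K}$ is a smooth complex Calabi--Yau threefold. The key geometric observation is that, just as in Section \ref{section:topology}, the generic fiber is built from $\mb{P}^3$ by blowing up points and lines and then taking a double cover branched along a smooth divisor that is a disjoint union of $\mb{P}^2$'s; running the same Mayer--Vietoris argument as in Proposition \ref{prop:topology} (with $\mb{F}_l$-coefficients, $l\neq 2$) shows $H_1(Y_{3,K},\mb{F}_l)=0$ for all $l\neq 2$, and one checks similarly — this is where I expect the main work — that $H^3(Y_{3,K}(\mb{C}),\mb{Z})=0$, i.e. that the third Betti number and all $3$-torsion in $H^3$ of the complex manifold vanish.

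Granting that topological vanishing, the argument concludes quickly. Since $\mc{Y}_3/\mc{O}$ is smooth and proper with $Y_3$ as special fiber, Caruso's theorem (\cite{caruso}, used via $\dim Y_3 = 3$, which is less than $p-1$ only for $p=5$ — so here I must instead invoke the version of Caruso--Faltings that applies in the relevant ramification/dimension range, or argue through crystalline--étale comparison directly) gives
\[
\dim_{\mb{F}_3} H^3_{\dR}(Y_3) = \dim_{\mb{F}_3} H^3_{\text{ét}}(Y_{3,K},\mb{F}_3) = \dim_{\mb{F}_3}\big(H^3(Y_{3,K}(\mb{C}),\mb{Z})\otimes\mb{F}_3\big) = 0,
\]
using the universal coefficients theorem and Artin comparison for the middle equality. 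Hence $H^3_{\dR}(Y_3)=0$, and since crystalline cohomology $H^3_{\crys}(Y_3/W)$ is a finitely generated $W$-module with $H^3_{\crys}(Y_3/W)\otimes_W k$ mapping onto (in fact fitting into the usual exact sequence with) $H^3_{\dR}(Y_3)$, the vanishing of the de Rham group forces $H^3_{\crys}(Y_3/W)=0$ (its rank is $b_3=0$ and its torsion injects into $H^3_{\dR}$ or is detected by it via the Bockstein). In particular $b_3(Y_3)=0$.

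The main obstacle, and the only place requiring genuine care, is the topological computation $H^3(Y_{3,K}(\mb{C}),\mb{Z})=0$: one must track, through the Mayer--Vietoris sequence \eqref{eqn:MV} applied in degrees up to $3$, that no class in $H^3$ survives — using that $H_*$ of the tubular neighborhood of $D$ is that of a disjoint union of $\mb{P}^2$'s, that $Y$ (the blown-up $\mb{P}^3$) has $H^3=0$ and torsion-free cohomology, and that the branched double cover contributes no new middle cohomology because the branch locus is a disjoint union of planes. This is parallel to the $h^{21}=0$ computation of Cynk--van Straten in \cite{cvs} (Theorem \ref{thm:cvschar3} already records $h^{21}(Y_3)=0$ in characteristic $3$), and indeed an alternative route is to observe that $b_3(Y_3)$ can be bounded above by the characteristic-zero Hodge numbers via semicontinuity-type arguments together with Theorem \ref{thm:cvschar3}; but the clean self-contained argument is the Mayer--Vietoris one, and I would carry it out in detail there.
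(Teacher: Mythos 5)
Your strategy cannot work, for a reason that is at the heart of why $Y_3$ is interesting: by Theorem \ref{thm:cvschar3}, $Y_3$ admits \emph{no lift to any ring in which $3\neq 0$} (it does not even lift to $W_2$). The characteristic-zero double octic attached to the relevant arrangement has \emph{bad} reduction at $3$; $Y_3$ is a small resolution of that singular reduction and is genuinely non-liftable. So there is no smooth proper $\mc{Y}_3/\mc{O}$ with special fiber $Y_3$, hence no generic fiber $Y_{3,K}$ to which Caruso--Faltings or Artin comparison could be applied. Even setting that aside, the topological vanishing you propose to prove is false for any candidate lift: a smooth complex Calabi--Yau threefold has $h^{3,0}=h^{0,3}=1$, so $b_3\geq 2$ and $H^3(\,\cdot\,(\mb{C}),\mb{Z})\neq 0$; no Mayer--Vietoris computation will give $H^3=0$ in characteristic zero. (And, as you note yourself, the Caruso bound $ie<p-1$ fails for $i=3$, $p=3$ in any case.) The whole point of the theorem is that $b_3(Y_3)=0$ is a purely positive-characteristic pathology, so it must be proved by an argument intrinsic to characteristic $3$.

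The paper's actual argument is short and uses the non-liftability as the \emph{input} rather than an obstacle. Since $h^{21}(Y_3)=h^{12}(Y_3)=0$ and $h^{30}=h^{03}=1$, the only possible contributions to $H^3_{\dR}(Y_3)$ come from $H^3(Y_3,\Omega^0)$ and $H^0(Y_3,\Omega^3)$. Because $Y_3$ does not lift to $W_2$, Lemma \ref{lem:w2lift} (via Lemma \ref{lem:diffcup}: the conjugate spectral sequence differential is cup product with the obstruction class $\xi\in H^2(T_{Y_3})\cong H^2(\Omega^2)$, and this cup product is nondegenerate by Serre duality and the CY condition) shows that the differentials $H^1(\Omega^1)\to H^3(\Omega^0)$ and $H^0(\Omega^3)\to H^2(\Omega^2)$ are both nonzero, killing those two classes. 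Hence $H^3_{\dR}(Y_3)=0$, so $H^3_{\crys}(Y_3/W)=0$ by universal coefficients, and $b_3=0$ then follows from Katz--Messing, which identifies the dimensions of $H^i_{\crys}[1/p]$ and $\ell$-adic cohomology without any liftability hypothesis. I would encourage you to rework your proof along these lines; the conjugate spectral sequence mechanism is the key idea you are missing.
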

\begin{proof}
Recall that for $Y_3$ the Hodge numbers in degree three are given by 
\[
h^{30}=h^{03}=1, \ h^{21}=h^{12}=0.
\]
Now since $Y_3$ does not lift to $W_2$ by Theorem \ref{thm:cvschar3}, we may apply  Lemma \ref{lem:w2lift} to $Y_3$, and deduce that the differentials
\begin{align}
H^1(Y_3, \Omega^1) &\rightarrow H^3(Y_3, \Omega^0)\\
H^0(Y_3, \Omega^3) &\rightarrow H^2(Y_3, \Omega^2)
\end{align}
are non-zero; therefore   $H^3_{\dR}(Y_3)=0$ since the two  classes in $H^3(Y_3, \Omega^0)$ and $H^0(\Omega^3)$ are both killed by the differentials. Therefore $H^3_{\crys}=0$ by universal coefficients.

The vanishing of $b_3$ now follows from a  theorem of Katz-Messing (and does not require liftability): indeed as a consequence of the Weil conjectures they proved \cite[Theorem 1]{katzmessing} that Frobenius on rational cystalline cohomology and $l$-adic \'etale cohomologies have the same characteristic polynomials (for any $l$), and so in particular $H^i_{\crys}[1/p]$ and $H^i_{\acute{e}t}$ have the same dimensions.
\end{proof}

\begin{rmk}
This addresses a question in \cite[p.1]{takayama} about the third Betti number of $Y_3$ (see ``The author is not aware..."). Note that this does not give a new proof of non-liftability of $Y_3$ since we used the non-liftability  to deduce the vanishing of $b_3$.
\end{rmk}

\begin{question}
We note that the Hodge numbers $h^{21}=0$ and $h^{11}=42$ of the CvS threefold $Y_3$ agree with the corresponding Hodge numbers  of the Hirokado threefold $\mc{H}$ (see Theorem \cite[Theorem 3.6 (iii)]{ekedahl}): in fact, all the Hodge numbers of $Y_3$ and $\mc{H}$ agree, with the possible exception of $h^{10}$ and $h^{20}$ (and the Serre dual of these, of course). Also,  by Theorem \ref{thm:cvsy3} above, the crystalline cohomologies $H^3_{\crys}$ also agree. Finally, using the same techniques as Section \ref{section:topology}, one may compute  all the Betti numbers of $Y_3$  as well and see that they agree with those of $\mc{H}$ (we may compute the Betti numbers of $\mc{H}$ from the crystalline cohmologies \ref{thm:ekedahl} and  the result of Katz-Messing\cite[Theorem 1]{katzmessing}.
We therefore find it natural to ask: is $Y_3$ isomorphic to  $\tilde{\mc{F}}$, or at least birational to it? 
\end{question}

\section{The CvS variety in characteristic 5}
We first recall the following theorem in integral $p$-adic Hodge theory due to Caruso \cite[Th\'eor\`eme 1.1]{caruso} and Faltings \cite{faltings} (see also \cite[Remark 2.3 (4)]{bhattlecture}):

\begin{thm}\label{thm:carusofaltings}
For a variety $\mc{X}$ over a ring of integers $\mc{O}$ of some $p$-adic field $K$ with good reduction, let $e$ be the ramification degree of $K/\mb{Q}_p$, and $k$ be the residue field of $\mc{O}$. If $ie<p-1$, then we have 
\[
\dim_{k}H^i_{\dR}(X_k)=\dim_{\mb{F}_p}H^i(X_{\acute{e}t},\mb{F}_p).
\]

\end{thm}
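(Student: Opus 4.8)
The plan is to recognize this statement as essentially a repackaging of the torsion $p$-adic comparison theorems of Faltings \cite{faltings} and Caruso \cite{caruso}, and to assemble their output with the two relevant universal coefficient sequences. The statement compares mod $p$ \'etale cohomology of the generic fiber with de Rham cohomology of the special fiber, and the bridge is integral $p$-adic Hodge theory in the ``small'' range: when $e=1$ one is in the Fontaine--Laffaille window $i\leq p-2$ and can work directly with filtered $\varphi$-modules over $W(k)$, while for general $e$ with $ie<p-1$ one replaces these by Breuil modules over the ring $S$. First I would invoke the integral comparison theorem in this range to identify, as a $\Gal(\bar K/K)$-representation, $H^i_{\acute{e}t}(\mc{X}_{\bar K},\mb{Z}_p)$ with the lattice $T_{st\star}(\mc{M})$ attached to the Breuil module $\mc{M}$ built from $H^i_{\dR}(\mc{X})$ together with its Hodge filtration and (divided) Frobenius; since $\mc{X}$ has good reduction this functor can in fact be taken to be the crystalline variant.

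Next I would pass to torsion coefficients on both sides. On the de Rham side, using $H^{*}_{\crys}(X_k/W_1(k)) = H^{*}_{\dR}(X_k/k)$, there is a short exact sequence
\[
0 \to H^i_{\crys}(X_k/W)/p \to H^i_{\dR}(X_k) \to H^{i+1}_{\crys}(X_k/W)[p] \to 0,
\]
and on the \'etale side the analogous sequence
\[
0 \to H^i_{\acute{e}t}(\mc{X}_{\bar K},\mb{Z}_p)/p \to H^i_{\acute{e}t}(\mc{X}_{\bar K},\mb{F}_p) \to H^{i+1}_{\acute{e}t}(\mc{X}_{\bar K},\mb{Z}_p)[p] \to 0.
\]
Because the comparison functor is exact in the given range, it carries the first sequence term by term onto the second, so it is enough to understand what it does to the mod $p$ de Rham object. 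The numerical heart is then the fact that the mod $p$ incarnation of Breuil's functor is length-preserving: a torsion Breuil module killed by $p$ is, after forgetting its extra structure, a finite-dimensional $k$-vector space, and it is sent to an $\mb{F}_p$-vector space of the same dimension. Combined with the observation that the Breuil module attached to $H^i_{\dR}(\mc{X})$ reduces modulo $p$ to an object of $k$-dimension $\dim_k H^i_{\dR}(X_k)$, this yields $\dim_{\mb{F}_p}H^i_{\acute{e}t}(\mc{X}_{\bar K},\mb{F}_p) = \dim_k H^i_{\dR}(X_k)$, as desired.

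The hard part is precisely everything I have treated as a black box: that for $ie<p-1$ the torsion comparison functor (Breuil's $T_{st\star}$, or the Fontaine--Laffaille functor when $e=1$) is exact, fully faithful, and length-preserving, and that the Breuil module of $H^i_{\dR}(\mc{X})$ is well behaved under reduction mod $p$ --- this is the substance of \cite{faltings} and \cite{caruso}, which I would cite rather than reprove. The only genuinely new checks are bookkeeping: verifying that the Hodge filtration on $H^i_{\dR}(\mc{X})$ has jumps confined to $[0,i]$ so that its Breuil module lies in the subcategory $\mathrm{Mod}^{\,i}_{/S}$ for which Caruso's hypothesis $ei<p-1$ is the operative bound, and confirming that no torsion pathology intervenes when splicing together the two universal coefficient sequences --- in particular we never need $H^i_{\crys}(X_k/W)$ to be torsion free, since we are working with the torsion objects directly.
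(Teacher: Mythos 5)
The paper offers no independent proof of this statement: it is recalled verbatim as a known theorem of Caruso and Faltings (with a pointer to Bhatt's lecture notes for the dimension-count formulation), which is exactly the reduction you carry out, so your proposal matches the paper's approach while supplying more of the bookkeeping. One small imprecision worth flagging: a $p$-torsion Breuil module is a module over $S/pS\cong k[u]/(u^{ep})$ rather than a $k$-vector space, so the length comparison goes through the $S_1$-rank and the base change $H^i_{\crys}(\mc{X}_1/S_1)\otimes_{S_1}k\cong H^i_{\dR}(X_k)$ — but this is precisely what Caruso's theorem packages, so nothing is lost.
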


\begin{lem}\label{lem:h10}
For the CvS Calabi-Yau threefold $X_5/\mb{F}_5$,  we have $H^0(\Omega^1)=0$.

\end{lem}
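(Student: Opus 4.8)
The plan is to use Theorem~\ref{thm:carusofaltings} to translate the vanishing of $H^0(X_5,\Omega^1)$ into a statement about the mod-$5$ \'etale cohomology of the generic fiber. Since $K=\mb{Q}_5(\sqrt 5)$ has ramification degree $e=2$ over $\mb{Q}_2$\,---\,wait, over $\mb{Q}_5$, with $p=5$\,---\,we have $ie<p-1$ precisely for $i=1$, i.e. $1\cdot 2 = 2 < 4 = p-1$. Hence Theorem~\ref{thm:carusofaltings} applies in degree $1$ and gives
\[
\dim_{\mb{F}_5}H^1_{\dR}(X_5)=\dim_{\mb{F}_5}H^1(X_{\acute{e}t},\mb{F}_5).
\]
By Proposition~\ref{prop:topology}, $H_1(X,\mb{Z})$ is $2$-torsion, so $H_1(X,\mb{F}_5)=0$, and by the universal coefficients theorem (for the complex manifold $X(\mb{C})$, via Artin comparison) $H^1(X_{\acute{e}t},\mb{F}_5)=0$ as well. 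Therefore $H^1_{\dR}(X_5)=0$.

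Next I would feed this into the Hodge-de Rham spectral sequence for $X_5/\mb{F}_5$. Since $X_5$ is a threefold in characteristic $5$ with $\dim X_5 = 3 < 5 = p$, and $X_5$ lifts to $W_2(\mb{F}_5)$ provided its Artin-Mazur height is finite\,---\,but actually here I should be more careful: Theorem~\ref{thm:ss} shows $X_5$ is supersingular, so it does \emph{not} lift to $W_2$ and the Hodge-de Rham spectral sequence need not degenerate. Nonetheless the spectral sequence still gives the inequality
\[
\dim_{\mb{F}_5}H^1_{\dR}(X_5)\ \geq\ h^{1,0}(X_5)+h^{0,1}(X_5) - (\text{corrections from differentials}),
\]
and in the relevant low degrees one can argue directly. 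The clean way: the $E_1$ terms contributing to $H^1_{\dR}$ are $H^1(X_5,\mc O)$ and $H^0(X_5,\Omega^1)$, and the only differential that could affect them on the $E_1$-page maps out of $H^1(\mc O)$ or $H^0(\Omega^1)$ into a term in total degree $2$, or into $H^0(\Omega^1)$ from a total-degree-$0$ term\,---\,but $H^{-1}$ is zero, so $H^0(\Omega^1)$ is never hit, and $d_1\colon H^0(\Omega^1)\to H^1(\Omega^1)$ could a priori be nonzero. However, $d_1$ on the $E_1$-page of Hodge-de Rham is the map induced by $d\colon \Omega^0\to\Omega^1$ etc., which vanishes on cohomology (the de Rham differential is $\mc O$-linear only after passing to $E_1$, and in fact $d_1$ is zero in char $p$ for degree reasons here is too glib)\,---\,the correct statement is simply that there is always a surjection from the abutment's graded pieces, giving $\dim H^1_{\dR}(X_5) = h^{0,1} + h^{1,0} - (\mathrm{rk}\,d_1^{0,1} + \mathrm{rk}\,d_1^{1,0}) - \ldots$; since all these quantities are non-negative and $H^1_{\dR}(X_5)=0$, we get $h^{1,0}(X_5) \leq \dim H^1_{\dR}(X_5) = 0$ once we know $h^{1,0}$ injects. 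Concretely: $E_\infty^{1,0}$ is a subquotient of $H^0(\Omega^1) = E_1^{0,1}$\,---\,rather, $E_\infty^{0,1}$ is a subquotient of $H^0(\Omega^1)$, and $E_\infty^{0,1}$ is a subspace of $\mathrm{gr}\,H^1_{\dR}(X_5)=0$, so $E_\infty^{0,1}=0$; but $E_2^{0,1}=E_1^{0,1}=H^0(\Omega^1)$ since nothing maps into $E_1^{0,1}$ ($E_1^{-1,*}=0$) and the outgoing $d_1\colon H^0(\Omega^1)\to H^1(\Omega^1)$ must be analyzed.

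So the real content\,---\,and the step I expect to be the main obstacle\,---\,is controlling that outgoing differential. The cleanest resolution is to invoke Serre duality together with the Calabi-Yau condition: $H^0(X_5,\Omega^1)$ is Serre-dual to $H^3(X_5,\Omega^2)$, and one expects to combine this with the known jumping data ($h^{2,1}(X_5)=1$, $h^{1,1}(X_5)=39$) from Theorem~\ref{thm:cyconditions} and the crystalline torsion-freeness from (eventually) Theorem~\ref{thm:cvsintro}(1). Actually, the logically cleanest route avoids the differential entirely: since by Proposition~\ref{prop:topology} and Caruso-Faltings we have $\dim H^1_{\dR}(X_5) = 0$, and since $H^1(X_5,\mc O) = 0$ (this is part of $X_5$ being a strict Calabi-Yau, Theorem~\ref{thm:cyconditions}(1)), the $E_1$-page already has $E_1^{1,0}=H^1(\mc O)=0$, so $H^0(\Omega^1) = E_1^{0,1}$ is the only possibly-nonzero contribution in total degree $1$, hence $E_\infty^{0,1}$ is a quotient of $H^0(\Omega^1)$; but there is no room for an incoming differential and the only outgoing one is $d_1\colon E_1^{0,1}=H^0(\Omega^1)\to E_1^{1,1}=H^1(\Omega^1)$, which on the $E_1$-page of the Hodge-de Rham spectral sequence is \emph{zero} because it is induced by the $\mc O_{X_5}$-linear-on-cohomology de Rham differential\,---\,more precisely, $d_1^{0,1}$ is the connecting map for $0\to\Omega^{\geq 1}/\Omega^{\geq 2}\to \Omega^{\geq 0}/\Omega^{\geq 2}\to\Omega^0\to 0$ shifted, and standard: the $E_1$ differential of the stupid filtration spectral sequence is precisely $d$, and $H^i(d)\colon H^i(\Omega^j)\to H^i(\Omega^{j+1})$ need not vanish in general. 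Therefore I will instead conclude via dimension count: $\dim E_\infty^{0,1}\leq \dim H^1_{\dR}(X_5)=0$ forces $E_\infty^{0,1}=0$, and since $E_2^{0,1}=\ker(d_1^{0,1})$ and all higher differentials out of the $(0,1)$ spot land in zero groups for degree reasons ($d_r\colon E_r^{0,1}\to E_r^{r,2-r}$ with $2-r<0$ for $r\geq 3$ and $E_2^{2,0}$ receives from nothing relevant), we get $E_\infty^{0,1}=E_2^{0,1}=\ker(d_1^{0,1}\colon H^0(\Omega^1)\to H^1(\Omega^1))$. Hence $d_1^{0,1}$ is injective. But $\dim H^1(\Omega^1)=h^{1,1}(X_5)=39$ is finite and this alone does not kill $H^0(\Omega^1)$. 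The genuinely decisive input must therefore be that $X_5$ lifts to $W_2$\,---\,which it does \emph{not}. Consequently the honest argument is: Caruso-Faltings plus Proposition~\ref{prop:topology} give $H^1_{\dR}(X_5)=0$; then $h^{1,0}(X_5)+h^{0,1}(X_5)\le \dim H^1_{\dR}(X_5)$ \emph{fails} in general, so one instead uses that $h^{0,1}(X_5)=\dim T_e\Pic(X_5)=0$ by Theorem~\ref{thm:cyconditions}(1) to identify $H^1_{\dR}(X_5)$ with a subspace built only from $H^0(\Omega^1)$ via the spectral sequence filtration: the filtration on $H^1_{\dR}$ has graded pieces $E_\infty^{1,0}\subseteq E_1^{1,0}=H^1(\mc O)=0$ and $E_\infty^{0,1}$, a quotient of $H^0(\Omega^1)$. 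Thus $H^1_{\dR}(X_5)=E_\infty^{0,1}$, a quotient of $H^0(\Omega^1)$; that gives only $\dim H^0(\Omega^1)\geq 0$, no contradiction. The resolution: run the \emph{conjugate} spectral sequence instead, where $H^0(\Omega^1)$ appears as a \emph{sub}object contributing to $H^1_{\dR}$; combined with $H^1(\mc O)=0$ we then get $H^0(\Omega^1)\hookrightarrow H^1_{\dR}(X_5)=0$, whence $H^0(X_5,\Omega^1)=0$. I expect the only subtlety to be checking that in the conjugate spectral sequence $E_2^{0,1}=H^0(\Omega^1)$ survives to $E_\infty$, which follows because the incoming and outgoing differentials at that spot land in or come from zero groups (using $H^{-1}=0$ and $H^1(\mc O)=0$); this is where I would be most careful.
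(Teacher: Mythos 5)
Your final argument---Caruso--Faltings together with Proposition~\ref{prop:topology} to get $H^1_{\dR}(X_5)=0$, then the conjugate spectral sequence to see that $H^0(\Omega^1)=E_2^{0,1}$ survives to $E_\infty$ and hence must vanish---is exactly the paper's first proof of this lemma (the long Hodge--de Rham detour you correctly abandon is not needed). One small correction: the outgoing differential $d_2\colon E_2^{0,1}\to E_2^{2,0}$ lands in $H^2(X_5,\mc{O})$, not $H^1(X_5,\mc{O})$; both vanish by the strict Calabi--Yau condition of Theorem~\ref{thm:cyconditions}, so the argument stands as written.
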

We give two proofs of this.
\begin{proof}[Proof 1]
We again consider the conjugate spectral sequence. 
\begin{center}
\begin{tikzpicture}
  \matrix (m) [matrix of math nodes,
    nodes in empty cells,nodes={minimum width=5ex,
    minimum height=5ex,outer sep=-5pt},
    column sep=1ex,row sep=1ex]{
               &   H^0(\Omega^3)     &  H^1(\Omega^3)      & H^2(\Omega^3)  &  H^3(\Omega^3) &\\
                & H^0(\Omega^2)      &   H^1(\Omega^2)  &H^2(\Omega^2)     & H^3(\Omega^2) &\\
               &  H^0(\Omega^1) &  H^1(\Omega^1)  & H^2(\Omega^1) & H^3(\Omega^1) & \\
               &  H^0(\Omega^0)  & H^1(\Omega^0) &  H^2(\Omega^0)  &  H^3(\Omega^0) &\\};
  \draw[-stealth] (m-3-2.south east) -- (m-4-4.north west);
\end{tikzpicture}
\end{center}
If $x\in H^0(\Omega^1)$ is non-zero, then since by Theorem \ref{thm:cyconditions}, the group $H^2(\Omega^0)$ is zero, this class $x$ survives to the $E_{\infty}$-page of the spectral sequence, and hence $H^1_{\dR}(X_5)\neq 0$. Now by the integral comparison theorem of Caruso \cite{caruso} and Faltings \cite{faltings}
\[
\dim_{\mb{F}_5}H^1_{dR}(X_5)=\dim_{\mb{F}_5}H^1(X_{\acute{e}t}, \mb{F}_5)
\]
and therefore $H^1_{\Betti}(X, \mb{F}_p)\neq 0$. Since $H^1_{\Betti}(X, \mb{Z})$ vanishes (indeed, it cannot have torsion by the universal coefficients theorem, and $H^1(X, \mc{O})=0$ implies its free part vanishes also), the class in $H^1_{\Betti}(X, \mb{F}_p)\neq 0$ comes from $H^2(X, \mb{Z})$, and so by the universal coefficients theorem we have $H_1(X, \mb{F}_5)\neq 0$, contradicting Proposition \ref{prop:topology}.

\end{proof}
\begin{proof}[Proof 2]
Recall that, for the generic fiber, we have 
\[
\dim H^{1}(X, \Omega^1)=38, \ \dim H^{1}(X, \Omega^2)=0,
\]
while in the special fiber we have 
\[
\dim H^{1}(X_5, \Omega^1)=39,\  \dim H^{1}(X_5, \Omega^2)=1.
\]
This implies also that the dimensions of $H^2(\Omega^2)$ and $H^2(\Omega^2)$ go up by one upon reducing modulo $\pi$. 
This implies that there is torsion in the integral Hodge cohomology groups. Indeed, by universal coefficients, for all $i, j$ we have an exact sequence
\[
0\rightarrow H^i(\mc{X}, \Omega^j)/\pi H^i(\mc{X}, \Omega^j)\rightarrow H^i(X_5, \Omega^j)\rightarrow H^{i+1}(\mc{X}, \Omega^j)[\pi]\rightarrow 0
\]
coming from the exact sequence of sheaves 
\[
0\rightarrow \Omega^j_{\mc{O}}\xrightarrow{\cdot \pi} \Omega^j_{\mc{O}}\rightarrow \Omega^j\rightarrow 0.
\]
Here $\pi$ denotes a uniformizer of $\mc{O}$, which in our case can be chosen to be $\sqrt{5}$.

Now suppose $H^0(X_5, \Omega^1)$ is non-zero. Then by Serre-duality we have $H^3(X_5, \Omega^2)$ nonzero as well, and so there must be torsion in $H^3(\mc{X}, \Omega^2)$ (since otherwise we would have a class in $H^4(X_5, \Omega^2)$, which is absurd). Let us call this class $\xi$. Then $\xi$ contributes to the extra class in $H^2(X_5, \Omega^2)$. 

Now consider $H^1(X_5, \Omega^2)$. Again since the dimension of this group is one bigger than that of the generic fiber, there is torsion in either $H^1(\mc{X}, \Omega^2)$ or $H^2(\mc{X}, \Omega^2)$. But the latter cannot happen, since this would mean that $H^2(X_5, \Omega^2)$ has dimension at least 2 more than the generic fiber. Hence we have torsion in $H^1(\mc{X}, \Omega^2)$.

But this is the formal deformation space of $\mc{X}$, and so this implies that there is a lift of the non-trivial deformation of $X_5$ over $\mb{F}_5[\epsilon]/\epsilon^2$ to $\mc{O}[\epsilon]/\epsilon^2$. The deformations are given by equisingular deformations of the hyperplane arrangement $D$, and so we must find a solution to the equation
\[
x^2+x-1=0
\]
in $\mc{O}[\epsilon]/\epsilon^2$ lifting the solution $2+\epsilon \in \mb{F}_5[\epsilon]$. We now check that this is impossible: indeed if we have such a solution $\lambda +\mu \epsilon$ for $\lambda, \mu \in \mc{O}$, then reducing modulo $\epsilon$ we must have that
\[
\lambda=\frac{-1+\sqrt{5}}{2},
\]
and so the equation $(\lambda+\mu\epsilon)^2+(\lambda+\mu\epsilon)-1=0$ reduces to 
\[
2\lambda \mu +\mu =0
\]
which has no solutions. Therefore $H^1(\mc{X}, \Omega^2)$ cannot have torsion, and so $H^0(X_5, \Omega^1)=0$, as claimed.
\end{proof}

\begin{thm}\label{thm:cvs5crystorsionfree}
The crystalline cohomology of $X_5$ is torsion free, i.e.
\[
H^2_{\crys}(X_5/W)=W^{\oplus38}, \ H^3_{\crys}(X_5/W)=W^{\oplus 2}.
\]
Moreover the Hodge cohomologies $H^i(\mc{X}, \Omega^j)$ of the integral model  are torsion free except for $H^2(\mc{X}, \Omega^1)$ and $H^2(\mc{X}, \Omega^2)$.

\end{thm}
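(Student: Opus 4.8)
The plan is to bootstrap from the integral comparison theorem (Theorem \ref{thm:carusofaltings}) together with the topological input of Proposition \ref{prop:topology} and the vanishing $H^0(X_5,\Omega^1)=0$ just established in Lemma \ref{lem:h10}. First I would record the dimensions of $H^*_{\mathrm{dR}}(X_5)$. Since $\mathrm{Poincar\acute{e}}$ duality holds on the smooth proper $X_5$, and $H^i_{\mathrm{dR}}$ is computed by the Hodge-de Rham spectral sequence, I would argue degree by degree: $H^0_{\mathrm{dR}}=k$ and $H^1_{\mathrm{dR}}=0$ (using $h^{01}=0$ from Theorem \ref{thm:cyconditions} and $h^{10}=0$ from Lemma \ref{lem:h10}); then Theorem \ref{thm:carusofaltings} with $e=1$, $p=5$, $i\le 3$ forces $\dim_{k}H^i_{\mathrm{dR}}(X_5)=\dim_{\mathbb{F}_5}H^i_{\mathrm{\acute et}}(X,\mathbb{F}_5)$ for $i\le 3$; combined with Proposition \ref{prop:topology} (which gives $H_1(X,\mathbb{F}_5)=0$, hence $H^1_{\mathrm{\acute et}}=H^2_{\mathrm{\acute et}}(X,\mathbb{F}_5)_{\mathrm{tors}}=0$ type statements via universal coefficients on the torsion-free lift) I can pin down $\dim H^2_{\mathrm{dR}}(X_5)=38$ and $\dim H^3_{\mathrm{dR}}(X_5)=2$. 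The key point here is that no Hodge number in degrees $\le 3$ other than $h^{11}$ can absorb extra dimension, so the de Rham Betti numbers in low degree equal the characteristic-zero ones.

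Next I would deduce that the Hodge-de Rham spectral sequence of $X_5$ degenerates at $E_1$. Indeed, Theorem \ref{thm:cyconditions} tells us $X_5$ is a strict Calabi-Yau with known $h^{ij}$ for the CvS family, and by the computation above $\sum_{i+j=n}h^{ij}(X_5)=\dim H^n_{\mathrm{dR}}(X_5)$ for $n\le 3$, hence by duality for all $n$; therefore the spectral sequence degenerates. (Alternatively one can cite that $X_5$ is a surface-by... no — here I simply use the numerical coincidence.) Degeneration gives $\dim H^n_{\mathrm{dR}}(X_5)=\sum h^{ij}$, which I have now matched to the characteristic-zero Betti numbers; in particular the crystalline cohomology $H^i_{\mathrm{crys}}(X_5/W)$, whose reduction mod $p$ is $H^i_{\mathrm{dR}}(X_5)$, has $\mathrm{length}$ of torsion controlled by the jump $\dim H^i_{\mathrm{dR}}(X_5)-\mathrm{rank}\,H^i_{\mathrm{crys}}(X_5/W)$. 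Using Katz-Messing (as in the proof of Theorem \ref{thm:cvsy3}) the rank of $H^i_{\mathrm{crys}}[1/p]$ equals $b_i=\dim H^i_{\mathrm{\acute et}}(X,\bar{\mathbb{Q}}_\ell)$, which equals the complex Betti number of the generic fiber. Since $X$ over $\mathbb{C}$ has $b_2=38$, $b_3=2$ (from the CvS Hodge diamond) and these equal $\dim H^i_{\mathrm{dR}}(X_5)$, there is no room for torsion: $H^i_{\mathrm{crys}}(X_5/W)$ is free of the stated ranks. The torsion-freeness of $H^*_{\mathrm{dR}}(\mathcal{X})$ over $\mathcal{O}$ then follows since $H^*_{\mathrm{dR}}(\mathcal{X})\otimes k\cong H^*_{\mathrm{dR}}(X_5)$ and the generic-fiber de Rham cohomology has matching dimension.

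Finally, for the Hodge cohomology statement: by universal coefficients applied to $0\to\Omega^j_{\mathcal O}\xrightarrow{\pi}\Omega^j_{\mathcal O}\to\Omega^j\to0$, as in Proof 2 of Lemma \ref{lem:h10}, torsion in $H^i(\mathcal{X},\Omega^j)$ appears exactly as a jump $\dim_k H^i(X_5,\Omega^j)>\dim_K H^i(X_K,\Omega^j)$ or in the adjacent degree. Comparing the two Hodge diamonds of Theorem \ref{thm:cvsintro}, the only entries that jump are $h^{11}$ and $h^{22}$, each by $1$. Since $H^0(X_5,\Omega^1)=0$ (Lemma \ref{lem:h10}) and its Serre dual $H^3(X_5,\Omega^2)=0$, the jump in $h^{11}=\dim H^1(X_5,\Omega^1)$ can only be caused by $\pi$-torsion in $H^2(\mathcal{X},\Omega^1)$ (torsion in $H^1(\mathcal X,\Omega^1)$ would force a jump in $\dim H^1(X_K,\Omega^1)$ which does not occur, while the deformation-space argument of Proof 2 rules out torsion in $H^1(\mathcal X,\Omega^1)$ via the non-liftability of the solution of $x^2+x-1=0$). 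Similarly the jump in $h^{22}=\dim H^2(X_5,\Omega^2)$ is caused by $\pi$-torsion in $H^2(\mathcal X,\Omega^2)$, using that $H^3(\mathcal X,\Omega^2)$ torsion is already accounted for by $H^3(X_5,\Omega^2)=0$ and that torsion in $H^1(\mathcal X,\Omega^2)$ is excluded exactly as in Proof 2. All other Hodge groups show no jump in either adjacent degree, hence are torsion free. The main obstacle I anticipate is the careful bookkeeping in this last step — disentangling which of $H^i(\mathcal X,\Omega^j)$ and $H^{i+1}(\mathcal X,\Omega^j)$ carries the torsion responsible for a given jump — but the Serre-duality vanishing $H^3(X_5,\Omega^2)=0$ together with the deformation-theoretic obstruction already used in Lemma \ref{lem:h10} pins it down.
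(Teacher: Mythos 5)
Your proposal rests on two steps that both fail, so the argument as written does not go through.

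First, the appeal to Theorem \ref{thm:carusofaltings} in degrees $2$ and $3$ is not available. The CvS threefold is defined over the $\pi$-adic completion of $\mb{Z}\big[\frac{-1+\sqrt{5}}{2}\big]$ with $\pi=\sqrt{5}$, so the relevant local field is $\mb{Q}_5(\sqrt{5})$ and the ramification degree is $e=2$, not $e=1$. The hypothesis $ie<p-1=4$ therefore only holds for $i\leq 1$, which is exactly why the paper invokes Caruso--Faltings only for $H^1$ (in Proof 1 of Lemma \ref{lem:h10}) and never for $H^2$ or $H^3$. Even if the comparison were available in degree $3$, you would still need to know that $H^3(X,\mb{Z})$ has no $5$-torsion in order to conclude $\dim H^3_{\acute{e}t}(X,\mb{F}_5)=2$; Proposition \ref{prop:topology} controls only $H_1$, and the paper in fact deduces the absence of $5$-torsion in $H^3(X,\mb{Z})$ as a \emph{consequence} of this theorem (via Bhatt--Morrow--Scholze), not as an input.

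Second, and more seriously, your claim that the Hodge--de Rham spectral sequence of $X_5$ degenerates at $E_1$ is false, and the numerical check you propose refutes it: $\sum_{i+j=3}h^{ij}(X_5)=1+1+1+1=4$ while $\dim H^3_{\dR}(X_5)=2$. Non-degeneration is the whole point of this example ($X_5$ does not lift to $W_2$, cf.\ Corollary \ref{cor:w2lift}). The paper's actual route to $\dim H^3_{\dR}(X_5)=2$ is the opposite of yours: it exhibits non-trivial differentials killing the classes in $H^0(\Omega^3)$ and $H^3(\Omega^0)$, either via the Ekedahl--Shepherd-Barron criterion (degeneration on middle cohomology would make the deformation ring too smooth, contradicting the obstructedness in Theorem \ref{thm:cyconditions}(3)), or via Lemma \ref{lem:w2lift} applied to the conjugate spectral sequence using the non-liftability to $W_2$. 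Once $\dim H^3_{\dR}=2=b_3$ is known, universal coefficients for crystalline cohomology gives torsion-freeness of $H^3_{\crys}$ and $H^4_{\crys}$; torsion-freeness of $H^2_{\crys}$ follows from $H^1_{\dR}(X_5)=0$, which is where Lemma \ref{lem:h10} and $h^{01}=0$ enter. Your final bookkeeping for the integral Hodge groups is close in spirit to the paper's, but it quietly uses $H^0(X_5,\Omega^2)=0$ (through the Hodge diamond of Theorem \ref{thm:cvsintro}), which is itself one of the facts established inside the paper's proof of this theorem and needs its own argument.
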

\begin{rmk}
Note that this gives a proof of the fact that $H^3(X, \mb{Z})$ has no 5-torsion by the recent result of Bhatt-Morrow-Scholze which says that torsion in Betti cohomology forces torsion in crystalline cohomology.  It is probably possible to compute this cohomology group directly and see that it is 5-torsion free, however. Therefore this variety has no 5-torsion in its Brauer group, which is another invariant that is often of interest in the study of Calabi-Yau manifolds \cite{grossbrauer}.
\end{rmk}
\begin{proof}
We give two arguments that $H^3_{\crys}$ is torsion free, one using the fact that $X_5$ has obstructed deformation in characteristic $5$, and one using the fact that it does not lift to $W_2$.  For the first argument, a theorem of Ekedahl-Shepherd-Barron says that for a Calabi-Yau threefold, if  

\[
\dim H^n_{\dR}=\dim \bigoplus_{i+j=n}H^i(\Omega^j),
\]
(in other words that the differentials in the non-trivial differentials in the Hodge-de Rham spectral sequence do not touch the middle cohomology, then the deformation ring of $X_5$ in characteristic 5 is height 1 smooth, which would mean that the non-trivial first order deformation lifts to $\mb{F}_5[\epsilon]/\epsilon^5$, contradiction. Therefore $H^3_{\dR}(X_5)$ has dimension 2, and hence $H^3_{\crys}$ is torsion free, as required. 

For the second argument, we use the fact that $X_5$ does not lift to $W_2$. Then by Lemma \ref{lem:w2lift} the differential from $H^1(X_5, \Omega^1)$ to $H^3(X_5, \Omega^0)$ in the conjugate spectral sequence is non-trivial, and hence the unique class in $H^3(\Omega^0)$ dies in the $E_{\infty}$-page, and so again $H^3_{\dR}({X_5})$ has dimension 2, and we argue as above.

Now we come to $H^2_{\crys}(X_5/W)$. Torsion in this group would imply $H^1_{\dR}(X_5)$ is non-zero, and hence one of $H^0(X_5, \Omega^1)$ and $H^1(X_5, \Omega^0)$ has to be non-zero. This contradicts \ref{lem:h10} and the fact that $H^1(X_5, \mc{O})=0$. 

Now we must have $H^0(X_5, \Omega^2)=0$ as well arguments similar to either proofs of \ref{lem:h10}: either use the fact that any class in $H^0(X_5, \Omega^2)=0$ must survive to the $E_{\infty}$-page of the conjugate spectral sequence, since we have $\dim_{\mb{F}_5}H^3_{\dR}(X_5)=2$ and we already have non-zero differentials $H^0(X_5, \Omega^3)\rightarrow H^2(X_5, \Omega^2)$ and $H^1(X_5, \Omega^1)\rightarrow H^3(\Omega^0)$, or use a similar argument to the second proof of Lemma \ref{lem:h10} by chasing the torsion in the integral Hodge cohomologies. 

Finally the assertions about torsion in the integral Hodge cohomologies $H^i(\mc{X}, \Omega^j)$ follow from the fact that $H^0(X_5, \Omega^1)=H^0(X_5, \Omega^2)=0$.
\end{proof}

\begin{rmk}
Note that this gives a counterexample to an expectation stated by Joshi in \cite[page 8, paragraph beginning ``We expect that..."]{joshi}, namely that for a Calabi-Yau variety $X$ over a finite field of characteristic $\geq 5$, if $b_3\neq 0$ ("classical" in the terminology of \cite{joshi}) and $H^*_{\crys}(X/W)$ is torsion free then $X$ lifts to $W_2$.
\end{rmk}

We may now sum up this section by giving the proof of Theorem \ref{thm:cvsintro}, whose precise form we now state. 

\begin{thm}\label{thm:cvsfinal} For the CvS Calabi-Yau threefold $\mc{X}/\mc{O}$ the following hold: 
\begin{enumerate}
    \item The variety $X_5/\mb{F}_5$ has torsion free crystalline cohomology; in particular the cohomology groups $H^*_{\dR}(\mc{X})$ of the integral model are torsion free as well. More precisely,  cohomology groups are given by
    \begin{align*}
    H^*_{\dR}(\mc{X}/\mc{O})&=\mc{O},\ 0,\ \mc{O}^{38},\ \mc{O}^2,\  \mc{O}^{38},\ 0,\ \mc{O};\\
    H^*_{\crys}(X_5/W)&= W,\  0,\  W^{38},\  W^2,\  W^{38},\  0,\  W.
    \end{align*}
    Here, we have written the groups $H^i_{\bullet}$ in increasing order of from $i=1$ to $i=6$. Therefore the de Rham cohomology groups of the special fiber are given by:
    \[
    H^*_{\dR}(X_5)= \mb{F}_5, \ 0, \ \mb{F}_5^{38}, \ \mb{F}_5^2,\  \mb{F}^{38}, \ 0, \ \mb{F}_5.
    \]
    
    \item The Hodge cohomology groups $H^i(\mc{X}, \Omega^j)$ are torsion free $\mc{O}$-modules, except for $H^2(\mc{X}, \Omega^1)$ and $H^2(\mc{X}, \Omega^2)$, which have  non-trivial  torsion; more precisely they are given by 
    \[
    H^2(\mc{X}, \Omega^1)=\mc{O}/\pi^a, \ H^2(\mc{X}, \Omega^2)=\mc{O}^{38}\oplus \mc{O}/\pi^b,
    \]
    for some integers $a, b>0$.
    \item The Hodge diamond of $X_5$ is given by 
\begin{equation*}
\begin{array}{ccccccc}
&&& 1 &&&\\
&& 0 && 0 &&\\
& 0 && 39 && 0 &\\ 
1 && 1 && 1 && 1\\
& 0 && 39 && 0 &\\ 
&& 0 && 0 &&\\
&&& 1 &&&.\\
\end{array}
\end{equation*}
Note in particular that Hodge symmetry holds for $X_5$, which is not generally the case for smooth proper varieties over finite fields.

    \item The special fiber $X_5$ is supersingular, i.e. its  Artin-Mazur formal group is isomorphic to $\hat{\mb{G}}_a$.
\end{enumerate}

\end{thm}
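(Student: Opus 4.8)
The plan is to assemble Theorem \ref{thm:cvsfinal} from the pieces already in place, reducing everything to the two inputs: (i) the non-liftability of $X_5$ to $W_2$ (Theorem \ref{thm:cyconditions}(4)), and (ii) the topological fact $H_1(X, \mb{F}_5)=0$ (Proposition \ref{prop:topology}), together with the Cynk--van Straten Hodge computations in degree $1$ (Theorem \ref{thm:cyconditions}(2)) and the Caruso--Faltings comparison (Theorem \ref{thm:carusofaltings}), which applies here since $\mc{O}$ has ramification $e=2$ over $\mb{Z}_5$ and we only need degrees $i\le 3$, where $ie\le 6<p-1$ fails --- so I would instead use it only in degree $i=1$, where $1\cdot 2=2<4=5-1$, exactly as in Proof~1 of Lemma~\ref{lem:h10}. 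The high-level structure: first establish part (1) (crystalline/de~Rham torsion-freeness and the exact Betti numbers), then deduce part (2) (the Hodge torsion) from part (1) plus Lemma~\ref{lem:h10}, then part (3) (Hodge diamond) follows by counting, and part (4) is just Theorem~\ref{thm:ss} restated.

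For part (1): the Betti numbers $b_0=b_6=1$, $b_2=b_4=38$, $b_1=b_5=0$ are forced by $\mc{X}$ being a strict Calabi--Yau (Theorem~\ref{thm:cyconditions}(1)), the blow-up description of $H^2$ (Remark following Theorem~\ref{thm:cyconditions}), and Poincar\'e duality; the only issue is $b_3$, equivalently whether $H^3_{\crys}(X_5/W)$ has torsion. Both arguments are already sketched in Theorem~\ref{thm:cvs5crystorsionfree}: either invoke Ekedahl--Shepherd-Barron (if $H^3_{\dR}$ had dimension $>2$ the deformation ring would be smooth of the wrong shape, contradicting obstructedness in Theorem~\ref{thm:cyconditions}(3)), or use Lemma~\ref{lem:w2lift}: non-liftability to $W_2$ forces the conjugate-spectral-sequence differential $H^1(\Omega^1)\to H^3(\Omega^0)$ to be nonzero, killing the class in $H^3(\Omega^0)$, and dually killing $H^0(\Omega^3)$, so $\dim H^3_{\dR}(X_5)=2$, whence $H^3_{\crys}$ is torsion-free by universal coefficients. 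Torsion-freeness in degree $2$ follows from Lemma~\ref{lem:h10} ($H^0(\Omega^1)=0$) plus $H^1(\mc{O})=0$: a torsion class in $H^2_{\crys}$ would give $H^1_{\dR}(X_5)\ne 0$, contradicting the Hodge--de~Rham spectral sequence. The integral statements $H^*_{\dR}(\mc{X})$ torsion-free then follow from the universal-coefficient sequence $0\to H^i_{\dR}(\mc{X})/\pi\to H^i_{\dR}(X_5)\to H^{i+1}_{\dR}(\mc{X})[\pi]\to 0$ and a dimension count matching the special and generic fibers.

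For part (2): by Lemma~\ref{lem:h10} and its Serre dual we have $H^0(X_5,\Omega^1)=H^3(X_5,\Omega^2)=0$, and by the argument in the proof of Theorem~\ref{thm:cvs5crystorsionfree} also $H^0(X_5,\Omega^2)=H^3(X_5,\Omega^1)=0$; combined with $H^i(\mc{X},\mc{O})$, $H^i(\mc{X},\Omega^3)$ torsion-free (strict CY) this pins down, via the universal-coefficient sequence above, that the only possible torsion sits in $H^2(\mc{X},\Omega^1)$ and $H^2(\mc{X},\Omega^2)$; the jump $h^{11}\colon 38\to 39$ and $h^{22}\colon 38\to 39$ between generic and special fibers (Theorem~\ref{thm:cyconditions}(2)) then forces that torsion to be non-trivial, giving $H^2(\mc{X},\Omega^1)=\mc{O}/\pi^a$ and $H^2(\mc{X},\Omega^2)=\mc{O}^{38}\oplus\mc{O}/\pi^b$ with $a,b>0$. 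Part (3) is immediate: all Hodge numbers of $X_5$ except $h^{11}=h^{22}=39$ equal their characteristic-zero values (which are known from Cynk--van~Straten), so the diamond is as displayed, and Hodge symmetry is visible. Part (4) is Theorem~\ref{thm:ss}. The main obstacle is really bookkeeping rather than a single hard step: one must be careful that Caruso--Faltings is only invoked in degree $1$ (where the ramification bound holds), and that the two routes to $\dim H^3_{\dR}(X_5)=2$ are each fully justified; everything else is assembling already-proved lemmas.
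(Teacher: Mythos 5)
Your proposal is correct and follows essentially the same route as the paper: the theorem is assembled from Lemma~\ref{lem:h10}, Lemma~\ref{lem:w2lift}, Theorem~\ref{thm:cvs5crystorsionfree}, Proposition~\ref{prop:topology} and Theorem~\ref{thm:ss}, with the de Rham dimensions read off the conjugate spectral sequence and the Hodge torsion located via the universal-coefficient sequences exactly as you describe. Your remark that the Caruso--Faltings bound $ie<p-1$ (with $e=2$, $p=5$) restricts its use to degree $i=1$ is a correct and worthwhile precision that the paper leaves implicit.
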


Even though some of the arguments have already been given  in the course of the proofs of various theorems, we give them again here for the convenience of the reader.
\begin{proof}[Proof of Theorem \ref{thm:cvsfinal}]
For the computation of the cohomology groups $H^*_{\dR}(X_5)$, we have the  $E_2$-page of the conjugate spectral sequence  
\begin{center}
\begin{tikzpicture}
  \matrix (m) [matrix of math nodes,
    nodes in empty cells,nodes={minimum width=5ex,
    minimum height=5ex,outer sep=-5pt},
    column sep=1ex,row sep=1ex]{
               &   \mb{F}_5    &  0     & 0 &  \mb{F}_5  &\\
                & 0     &   \mb{F}_5   &\mb{F}_5^{39}      & 0 &\\
               &  0 &  \mb{F}_5^{39}   & \mb{F}_5 & 0 & \\
               &  \mb{F}_5  & 0 &  0  &  \mb{F}_5, &\\};
  \draw[-stealth] (m-3-3.south east) -- (m-4-5.north west);
  \draw[-stealth] (m-1-2.south east) -- (m-2-4.north west);
\end{tikzpicture}
\end{center}
where we have drawn the only non-trivial differentials. This gives 
\[
H^*_{\dR}(X_5)=\mb{F}_5, \ 0, \ \mb{F}_5^{38}, \ \mb{F}_5^2, \ \mb{F}_5^{38}, \ 0, \ \mb{F}_5
\]
as claimed. Since $H^*_{\crys}(X_5/W)$ is torsion free by Theorem  \ref{thm:cvs5crystorsionfree},  $H^*_{\dR}(\mc{X})$ is also torsion free, which  proves part (1) of the theorem. By universal coefficients and comparing Hodge cohomologies of the generic fiber and those of the special fiber, we must have a torsion class in $H^2(\mc{X}, \Omega^2)$ giving rise to extra classes in $H^2(X_5, \Omega^2)$ and $H^1(X_5, \Omega^2)$; similarly there must be a torsion class in $H^2(\mc{X}, \Omega^1)$ giving extra classes in $H^2(X_5, \Omega^1)$ and $H^1(X_5, \Omega^1)$. Finally the torsion parts of $H^2(\mc{X}, \Omega^2)$ and $H^2(\mc{X}, \Omega^1)$ are one dimensional upon reducing mod $\pi$, so we have proven part (2).

We have computed the Hodge diamond of $X_5$, and the supersingularity was proved in \ref{thm:ss}: this gives parts (3) and (4). This concludes the proof of the theorem.
\end{proof}

\section{Further questions}\label{section:further}
In this section we discuss some future directions and possible implications for the main theme of this paper, namely the relationship between torsion in cohomology of the generic fiber and various cohomologies of the special fiber.
\subsection{Hodge numbers and derived equivalences}\label{section:hodgenumber}
The derived category $D^b_{coh}(X)$ of a variety $X$ has become a prominent player in recent years in algebraic geometry, partly due to its role in the homological mirror symmetry conjectures. One of the major questions in the study of this category is how much information is contained in $D^b_{coh}(X)$; for example, one could ask about numerical invariants such as the Hodge numbers of $X$. For smooth proper varieties of dimension up to 3 in characteristic zero, it is known by work of \cite{popaschnell} that the Hodge numbers are derived invariants: more precisely, for varieties $X, Y$ for which there exists an equivalence of categories
\[
D^b_{coh}(X)\cong  D^b_{coh}(Y),
\]
we have $h^{ij}(X)=h^{ij}(Y)$ for all $i,j$. This question was investigated by Antieau-Bragg in \cite{antieaubragg} for varieties in positive characteristic, where they prove the same result for varieties up to dimension 2, and for threefolds under some restrictions; in particular for threefolds they assume that the crystalline cohomologies are torsion free.

Here we suggest that the Hodge numbers should not be derived invariant. We suggest several potential counterexamples, and hope to check the details in future work. OUr idea is simply that, as has been exploited several times in this paper, torsion in Betti cohomology in the generic fiber contributes to Hodge cohomologies in the special fiber. More precisely, for derived equivalent threefolds  $X$ and $Y$, if the torsion in their cohomologies differ, then it is very likely that their Hodge cohomologies differ as well. 

More specifically, such examples have been shown to exist by Addington \cite{addington}. Indeed, in loc.cit. two derived equivalent Calabi-Yau threefolds  $X$
 and $Y$ are exhibited such that 
\begin{align}\label{eqn:addington}
\begin{split}
&\pi_1(X)\cong 0, \ \ \rm{Br}(X)\cong (\mb{Z}/8)^{\oplus 2},\\
    &\pi_1(Y)\cong (\mb{Z}/8)^{\oplus 2}, \ \ \rm{Br}(Y)\cong 0.
    \end{split}
\end{align}
Here, $\rm{Br}$ denotes the Brauer group, which for a Calabi-Yau threefold $X$ simply means $H^3(X,\mb{Z})_{\rm{tors}}$. The derived equivalence between $X$ and $Y$ was shown  by Bak \cite{bak} as well as \cite{schnell}, and Addington  found that a certain extension between $H_1$ and $\rm{Br}$, thereby showing (\ref{eqn:addington}). 

The Calabi-Yau threefolds $X$ and $Y$ were originally studied by Gross-Popescu \cite{grosspopescu} and the Brauer group was computed by Gross-Pavanelli \cite{grosspavanelli}; one feature is that they are fibered by abelian surfaces. It seems likely that one can construct these threefolds in characteristic 2, and compute their cohomologies explicitly. For our purposes it is more convenient to have primes bigger than 2 in the torsion; for this we will consider analogous abelian surfaces fibered Calabi-Yau threefolds studied by Donagi-Gao-Schulz \cite{donagijunction} (though it is  likely that these have also appeared in \cite{grosspopescu}), where they compute torsion in the cohomologies of these threefolds. In particular,  the (family of) varieties $\mc{X}_{1,3}$, $\mc{X}_{3,1}$ (following  the notation of loc.cit.) have 
\begin{align*}
&H_1(\mc{X}_{3,1})\cong 0,\ \ \rm{Br}(\mc{X}_{3,1})\cong (\mb{Z}/3)^{\oplus 2},\\
&H_1(\mc{X}_{1,3})\cong (\mb{Z}/3)^{\oplus 2};\ \ \rm{Br}(\mc{X}_{1,3})\cong 0,\\
\end{align*}
furthermore it seems likely that they are derived equivalent, as in the case considered by Addington above. We state the following conjecture, which would easily imply that Hodge numbers are not derived invariant in positive characteristic (using Caruso's result), and we will study it in  future work.

\begin{conj}\label{conj:hodgenumber}
The Calabi-Yau threefolds in \cite{donagijunction} can be constructed rigorously, and (generically) have good reduction at  primes $p$ dividing the torsion in its Betti cohomology. Furthermore, the derived equivalences between these Calabi-Yau threefolds 
extend to characteristic $p$.
\end{conj}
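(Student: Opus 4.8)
\noindent\emph{A proposed approach to Conjecture~\ref{conj:hodgenumber}.}
The first task is to produce rigorous algebraic models of the threefolds of \cite{donagijunction}. One expects $\mc{X}_{1,3}$ to be (a crepant resolution of) an abelian-surface fibration $\mc{X}_{1,3}\to B$ over a rational base $B$, and $\mc{X}_{3,1}\to B$ to be its relative dual, i.e.\ a resolution of $\underline{\Pic}^0_{\mc{X}_{1,3}/B}$; the ``$(1,3)$ versus $(3,1)$'' dichotomy is then exactly the passage to the relative Picard scheme, which is the mechanism interchanging $\pi_1$ and $\Brauer$. Concretely, in the spirit of Schoen, $\mc{X}_{1,3}$ can be presented as a fiber product $\mc{S}_1\times_{\mb{P}^1}\mc{S}_2$ of two relatively minimal rational elliptic surfaces, each equipped with a section of order $3$. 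Writing down explicit Weierstrass data for $\mc{S}_1$ and $\mc{S}_2$ over $\mb{Z}[1/N]$ realizes both threefolds as smooth proper schemes over $\mb{Z}[1/N']$: smoothness of the total space and the strict Calabi--Yau conditions $\omega\cong\mc{O}$ and $h^{0,1}=h^{0,2}=0$ are open on the base and can be verified on a single geometric fiber, which handles ``can be constructed rigorously'' and simultaneously gives good reduction away from finitely many primes. That the torsion prime $p=3$ is among the good ones is then a finite computation: one arranges the defining data so that $\mc{S}_1$ and $\mc{S}_2$ have good or multiplicative reduction at $3$ and their order-$3$ sections avoid the nodes of the singular fibers in characteristic $3$, so that the translation and duality operations remain \'etale there.

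For the derived equivalence, recall that over the complex numbers $D^b_{coh}(\mc{X}_{1,3})\cong D^b_{coh}(\mc{X}_{3,1})$ is the relative Fourier--Mukai transform $\Phi_{\mc{P}}$ along the common base $B$, with $\mc{P}$ a relative Poincar\'e sheaf on $\mc{X}_{1,3}\times_B\mc{X}_{3,1}$ --- an equivalence by the work of Bak \cite{bak}, \cite{schnell}, and Addington \cite{addington} (the last of these showing that no Brauer twist intervenes in these cases, and exhibiting the extension that interchanges $\pi_1$ with $\Brauer$). Being assembled from relative Poincar\'e bundles, $\mc{P}$ is defined over $\mb{Z}[1/N]$ and spreads out to the special fibers. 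To see that $\Phi_{\mc{P}\otimes\mb{F}_p}$ remains an equivalence I would argue in two steps. First, it is fully faithful: by the numerical (Bondal--Orlov, Bridgeland--Maciocia) criterion this reduces to the statement that the fiber objects $\mc{P}_x$ --- translates of non-degenerate line bundles on the abelian-surface fibers --- are simple and mutually $\Hom$-orthogonal in all degrees, which is a characteristic-free fact about simple semihomogeneous sheaves on abelian varieties and holds at any prime of good reduction. Second, full faithfulness upgrades to an equivalence by Bridgeland's criterion, since source and target are smooth projective threefolds with trivial canonical class over a field. (Alternatively, ``is a Fourier--Mukai equivalence'' is an open and closed condition in a smooth proper family of kernels, so the characteristic-zero equivalence propagates directly.)

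I expect the genuine obstacle to be the interaction between good reduction at $p=3$ and the cohomological features that make the example worthwhile. On the $\mc{X}_{1,3}$ side one must verify that the reduction is still a \emph{smooth} Calabi--Yau threefold with $H_1(-,\mb{Z})$ still containing $(\mb{Z}/3)^{\oplus2}$ --- that is, that the order-$9$ cover does not degenerate to one with $\alpha_3$-type quotient singularities. On the $\mc{X}_{3,1}$ side, the $(\mb{Z}/3)^{\oplus2}$ in $\Brauer$ must survive to characteristic $3$, where it lives in flat cohomology $H^2_{\mathrm{fppf}}(-,\mu_3)$ rather than in naive \'etale cohomology with $\mb{Z}/3$-coefficients; as throughout this paper, the reliable way to certify its survival is to track the corresponding torsion in $H^2(\Omega^1)$ through the Caruso--Faltings comparison (Theorem~\ref{thm:carusofaltings}) and its Bhatt--Morrow--Scholze integral refinement. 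Producing and certifying this characteristic-$3$ Brauer class --- and, symmetrically, ruling out spurious jumps in the ``wrong'' Hodge numbers on each side --- is, I believe, the crux of the matter.

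Granting these ingredients, the application to derived invariance follows the template of the present paper. Since $H^1(\mc{X}_{1,3}(\mb{C}),\mb{F}_3)\cong\mb{F}_3^{\oplus2}$ (because $H_1(\mc{X}_{1,3},\mb{Z})\cong(\mb{Z}/3)^{\oplus2}$) and the ramification index is $e=1$, Theorem~\ref{thm:carusofaltings} applied with $i=1<p-1=2$ yields $\dim_{\mb{F}_3}H^1_{\dR}(\mc{X}_{1,3}\otimes\mb{F}_3)=2$, hence $h^{1,0}+h^{0,1}\ge2$ for the reduction even though both of these Hodge numbers vanish in characteristic zero. On the other hand $H_1(\mc{X}_{3,1},\mb{Z})=0$, so for $\mc{X}_{3,1}$ the $(\mb{Z}/3)^{\oplus2}$ in $\Brauer(\mc{X}_{3,1})=H^3(\mc{X}_{3,1},\mb{Z})_{\mathrm{tors}}$ instead forces, by the discussion of Section~\ref{section:torsiondeformation}, a jump concentrated in $h^{2,1}$ of $\mc{X}_{3,1}\otimes\mb{F}_3$. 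Two derived-equivalent threefolds have identical Hodge numbers in characteristic zero, while these jumps occur in different bidegrees, so the mod-$3$ Hodge diamonds differ --- establishing that Hodge numbers of threefolds are not derived invariants in positive characteristic.
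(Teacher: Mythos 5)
The statement you are addressing is stated in the paper as a \emph{conjecture}, and the paper offers no proof: the author explicitly says these threefolds were ``constructed'' via string dualities, that no rigorous mathematical construction is known to him, and that the details are left to future work. So there is no argument in the paper to compare yours against, and what you have written is a research program rather than a proof. That is not a criticism of the plan --- much of it is sensible and consistent with the paper's strategy (spread out over $\mb{Z}[1/N]$, spread out the Fourier--Mukai kernel, use Caruso--Faltings/BMS to convert Betti torsion into Hodge-number jumps) --- but each of the three clauses of the conjecture is left as genuinely open as before.

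Concretely: (i) Your proposed rigorous model, a Schoen-type fiber product $\mc{S}_1\times_{\mb{P}^1}\mc{S}_2$ of rational elliptic surfaces with $3$-torsion sections, is not obviously the Donagi--Gao--Schulz family: such fiber products are fibered by \emph{product-polarized} abelian surfaces, whereas $\mc{X}_{1,3}$ and $\mc{X}_{3,1}$ are fibered by $(1,3)$-polarized ones (the Gross--Popescu construction realizes them inside $\mb{P}^7$ via Heisenberg symmetry). Identifying your model with theirs --- or verifying that your model has $H_1\cong(\mb{Z}/3)^{\oplus 2}$ and dual Brauer group --- is precisely the missing construction, not a routine spreading-out. (ii) The assertion that ``is a Fourier--Mukai equivalence'' is open and closed in a family of kernels is not a standard fact; full faithfulness is checked via $\Ext$-orthogonality of the fiber objects $\mc{P}_x$, and $\Ext$ groups can jump under specialization. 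In characteristic $3$ the abelian surface fibers may be supersingular and the degree-$3$ isogenies involved may become inseparable, so the ``characteristic-free fact about semihomogeneous sheaves'' needs an actual argument at exactly the prime where it is most delicate. (iii) The punchline requires the jumps to land in \emph{different} Hodge bidegrees on the two sides. For the $H^3$-torsion side, the equality form of Theorem~\ref{thm:carusofaltings} is unavailable since $ie=3\not<p-1=2$ when $p=3$; the BMS inequality only forces $\dim H^3_{\dR}$ of the special fiber to grow, and locating the extra classes in $h^{2,1}$ (rather than $h^{1,2}$, or in a non-degeneration of Hodge--de Rham) is exactly the speculation of Section~\ref{section:torsiondeformation}, not a theorem. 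You correctly identify this cluster of issues as ``the crux of the matter,'' which is an accurate self-assessment: the proposal is a plausible roadmap, but none of its load-bearing steps is yet established.
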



\subsection{Torsion in cohomology and deformations}\label{section:torsiondeformation}
As emphasized at several points of this paper, one of our main points was to leverage the relation between the Betti cohomology of the generic fiber of a  variety and various cohomologies of the special fiber. Thinking along these lines, we have the following question which we hope to address in future work. A Calabi-Yau threefold $X$ in characteristic zero sometimes has non-trivial Brauer group, which is to say torsion in $H^3(X, \mb{Z})$: this invariant is of interest in algebraic geometry as well as in physics. Suppose that we have $p$-torsion in $H^3(X, \mb{Z})$, and that $X$ has good reduction at $p$; let us denote its special fiber by $X_p$. Then $H^3_{\dR}(X_p)$ will have extra classes, and therefore also in Hodge cohomologies; assuming $X_p$ is still CY, the extra classes will appear in $H^1(X_p, \Omega^2)$ (as well as in $H^1(X_p, \Omega^2)$, by Serre-duality). In other words, the deformation space in characteristic $p$ now has extra dimensions. What can be said about these extra deformations? Are they obstructed? 
For examples of Calabi-Yau threefolds with non-trivial Brauer group, we refer the reader to \cite{grosspavanelli, addington}. The first example to consider seems to be the case of the Enriques Calabi-Yau threefold, which has $H^3_{\rm{tors}}\cong \mb{Z}/2$ (cf \cite[Example 3.11]{grossslag}).

In fact, we  propose the following recipe for constructing these extra directions of deformations in some cases. Let us consider again the varieties $\mc{X}_{1,3}$ and $\mc{X}_{3,1}$ from Section \ref{section:hodgenumber}. In fact, we have 
\[
\mc{X}_{1,3}=\mc{X}_{3,1}/(\mb{Z}/3\times \mb{Z}/3),
\]
where on the right hand side $\mb{Z}/3\times \mb{Z}/3$ acts freely. This immediately suggests that there is an intermediate threefold, which is of the form 
\[
Z\defeq \mc{X}_{3,1}/\mb{Z}/3,
\] 
for some choice of $\mb{Z}/3\hookrightarrow\mb{Z}/3\times \mb{Z}/3$. By the result  of Addington \cite[Proposition on page 3]{addington}, we have that 
\[
\rm{Br}(Z)=\mb{Z}/3,
\]
and so we may expect that $h^{21}$ jumps by 1 in the special fiber, giving us an extra direction of deformation. Our speculation  is that, in the case that the  groups are $\alpha_3 \times \alpha_3$  in the special fiber, we may choose a diagonal 
\[
\alpha_3\hookrightarrow \alpha_3\times \alpha_3
\]
to form the quotient variety $Z$, and that, just as in the Moret-Bailly \cite{moretbailly} example, this choice of diagonal $\alpha_3$ has moduli: there is a $\mb{P}^1$-worth of diagonal $\alpha_3$'s with which we can form the quotient $\mc{X}_{3,1}/\alpha_3$, giving us an extra direction to deform in. One can make similar speculations for the quotients associated to  each of the  $\mc{X}_{m,n}$'s considered in Section \ref{section:hodgenumber}.

On the other hand, in the example $Z$ just mentioned,  there should be extra deformations even if the group is not $\alpha_3\times \alpha_3$ in the special fiber: we are not sure what to expect in these cases.

\subsection{Modularity} 
The rigid variety $X/\mb{Q}(\sqrt{5})$ has also been shown by Cynk-van Straten \cite{cynkhilbert} to be modular, in the sense that the two dimensional Galois representation of $\Gal(\overline{\mb{Q}}/\mb{Q}(\sqrt{5})$ given by the $l$-adic \'etale cohomology of $X$ is isomorphic to that of a Hilbert modular form on $\GL_2(\mb{Q}(\sqrt{5}))$. 
\begin{question}
Does the torsion class in $H^2(\mc{X}, \Omega^1)$ have any arithmetic meaning in terms of this Hilbert modular form or the Galois representation?
\end{question}

\subsection{Cohomology of the Hirokado variety}
Ekedahl has interpreted the Hirokado variety as a Deligne-Lusztig 
(which is in turn possibly isomorphic to the CvS variety $Y_3$). The $l$-adic, $\bmod \ l$, and even crystalline  cohomologies of Deligne-Lusztig varieties are known to give representations of finite groups of Lie type. Does the fact that the Hodge-de Rham spectral sequence does not degenerate have any interesting consequences from this point of view?

\subsection{Relations between $\mc{H}$ and $Y_3$}
As observed in Section \ref{section:cvs3}, all the known  cohomologies of the Hirokado threefold $\mc{H}$ and the CvS threefold $Y_3$ agree. Are these two varieties in fact isomorphic, or at least birational?

\appendix
\section{Construction of quotients of quintic hypersurfaces}\label{appendix}
In this appendix we show the existence of smooth Calabi-Yau threefolds which are free quotients of quintic hypersurfaces in $\mb{P}^4$ by $\mu_5$. The calculation follows essentially that of Lang \cite{lang}, who constructed similar surfaces (the Godeaux surfaces) as free quoitients of quintic hypersurfaces in $\mb{P}^3$ by $\mu_5$.

Consider the action of $\mu_5$ on $\mb{P}^4$ by 
\begin{equation}\label{eqn:mu5actionp4}
(X_0: X_1:X_2:X_3:X_4)\mapsto (X_0: \zeta X_1:\zeta^2X_2:\zeta^3 X_3:\zeta^4 X_4).
\end{equation}
We prove the following 
\begin{thm}\label{thm:godeauxconstruct}
The quotient map $\mb{P}^4\rightarrow \mb{P}^4/\mu_5$ (where $\mu_5$ acts by the above action) gives an embedding of $\mb{P}^4$ into $\mb{P}^{25}$, and furthermore a generic hyperplane section of $\mb{P}^4$ under this embedding is a smooth Calabi-Yau threefold. 
\end{thm}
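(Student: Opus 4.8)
The plan is to follow the strategy Lang uses for the Godeaux surfaces, transposed from $\mb{P}^3$ to $\mb{P}^4$. First I would analyze the linear system $V \defeq H^0(\mb{P}^4, \mc{O}(5))^{\mu_5}$ of $\mu_5$-invariant quintics, where $\mu_5$ acts with weights $0,1,2,3,4$ as in \eqref{eqn:mu5actionp4}. A degree-$5$ monomial $X_0^{a_0}\cdots X_4^{a_4}$ is invariant exactly when $\sum_i i\,a_i \equiv 0 \pmod 5$, and a roots-of-unity filter gives
\[
\dim V = \frac{1}{5}\Big(\binom{9}{4} + 4\Big) = 26 ,
\]
since for each $j \not\equiv 0 \pmod 5$ the residues $\{ij \bmod 5\}_i$ exhaust $\mb{Z}/5$ and $\prod_{k=0}^{4}(1 - \zeta^k t)^{-1} = (1 - t^5)^{-1}$ contributes $1$ to the coefficient of $t^5$. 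Thus $V$ defines a morphism $\phi\colon \mb{P}^4 \to \mb{P}(V^\ast) = \mb{P}^{25}$; it is base-point-free because $X_0^5, \dots, X_4^5 \in V$ have no common zero, and it is $\mu_5$-invariant, so $\phi = \bar\phi \circ q$ factors through the quotient $q\colon \mb{P}^4 \to \mb{P}^4/\mu_5$. The first assertion of the theorem is then that $\bar\phi$ identifies $\mb{P}^4/\mu_5$ with a closed subvariety of $\mb{P}^{25}$ (singular, with cyclic quotient singularities at the images of the five coordinate points $e_0,\dots,e_4$).

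To see $\bar\phi$ is a closed immersion I would check, exactly as in Lang: (i) $\bar\phi$ is injective, i.e. invariant quintics separate $\mu_5$-orbits — this follows from the fact that $k[X_0,\dots,X_4]^{\mu_5}$ separates $\mu_5$-orbits on $\mb{A}^5$ together with a direct manipulation of the relations $\hat p_i^5 = \lambda\,\hat q_i^5$ and of mixed invariant monomials such as $X_0^3X_2X_3$ and $X_0X_1X_2X_3X_4$, which let one reconstruct the homogeneous coordinates up to a common scalar and a fifth root of unity; and (ii) $\bar\phi$ induces a surjection on homogeneous coordinate rings, the usual Veronese-type computation. This step is the most laborious part of the proof, but it is entirely parallel to the surface case.

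For the second assertion, note $\phi^\ast \mc{O}_{\mb{P}^{25}}(1) = \mc{O}_{\mb{P}^4}(5)$ with $\phi^\ast H^0(\mc{O}(1)) = V$, so a hyperplane section of $\bar\phi(\mb{P}^4/\mu_5)$ is precisely $Y/\mu_5$ for $Y = V(F) \subset \mb{P}^4$ with $F \in V$ generic. I would then verify in turn: (a) generic $Y$ is smooth — the system $|V|$ is base-point-free and separates tangent vectors on the complement of the coordinate subspaces (one exhibits enough invariant monomials), so the characteristic-free form of Bertini's theorem applies there; since $X_i^5 \in V$, a generic $F$ has nonzero $X_i^5$-coefficient, so $Y$ avoids every $e_i$, while transversality of a generic invariant quintic along the coordinate strata is checked directly. (b) The $\mu_5$-action on a generic such $Y$ is free, because the fixed subscheme $(\mb{P}^4)^{\mu_5}$ is the reduced set $\{e_0,\dots,e_4\}$ (the weights are pairwise distinct mod $5$), which is exactly the locus with non-trivial stabilizer group scheme; hence $X \defeq Y/\mu_5$ is smooth, being a free quotient of a smooth variety by a finite flat group scheme, with $q\colon Y\to X$ a $\mu_5$-torsor. (c) $X$ is Calabi--Yau: adjunction gives $\omega_Y \cong \mc{O}_Y$, and $\mu_5$ acts trivially on $H^0(Y,\omega_Y)$ since a generator $\mathrm{Res}_Y(\Omega/F)$ with $\Omega = \sum_i (-1)^i X_i\, dX_0\wedge\cdots\widehat{dX_i}\cdots\wedge dX_4$ has weight $0+1+2+3+4 \equiv 0 \pmod 5$, so $\omega_X \cong \mc{O}_X$; and writing $q_\ast\mc{O}_Y = \bigoplus_{i=0}^4 L^{\otimes i}$ for the order-$5$ line bundle $L$ classifying the torsor, each $H^j(X, L^{\otimes i})$ is a summand of $H^j(Y,\mc{O}_Y)$, which vanishes for $j=1,2$ as $Y$ is a smooth quintic threefold.

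The main obstacle I anticipate is the characteristic-$5$ Bertini argument together with the transversality checks along the coordinate strata, since the ambient prime equals the order of the group and the naive Bertini statement is unavailable; this, and the explicit closed-immersion verification in the style of Lang, are where the real work lies, with the remaining points being formal.
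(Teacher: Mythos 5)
Your count of the $26$ invariant quintics and your plan for the embedding (showing the degree-$5n$ invariants are generated by the quintic invariants, i.e. your steps (i)--(ii)) match the paper's argument. The genuine gap is in step (a): in the only interesting case, characteristic $5$, the generic $\mu_5$-invariant quintic $Y$ is \emph{not} smooth, so the route ``generic $Y$ smooth, action free, hence $X=Y/\mu_5$ smooth'' cannot be repaired by a more careful Bertini argument. Over $\mb{F}_5$ the group scheme $\mu_5$ is infinitesimal and its action is encoded by the vector field $\sum_i iX_i\partial_{X_i}$, whose zero locus on $\mb{P}^4$ is exactly the five coordinate points (the weights are distinct mod $5$); an invariant quintic avoiding those points therefore carries a nowhere-vanishing global vector field. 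If $Y$ were smooth this would force $c_3(T_Y)=0$, i.e. $\chi(Y)=0$, contradicting $\chi=-200$ for a smooth quintic threefold. (The paper uses precisely this observation elsewhere, in answering Patakfalvi--Zdanowicz's Question~13.6, to show that $Y$ \emph{is} singular.) So the difficulty you flag at the end --- ``characteristic-$5$ Bertini and transversality along the coordinate strata'' --- is not a technical wrinkle but an actual obstruction: no invariant quintic on which $\mu_5$ acts freely is smooth in characteristic $5$.

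The paper's proof runs in the opposite direction: having embedded $\mb{P}^4/\mu_5$ in $\mb{P}^{25}$, it observes that the quotient is smooth away from the five images of the coordinate points (the action is free there, so the quotient is smooth by flat descent), that a generic hyperplane $H$ misses those points, and applies Bertini to the smooth embedded quasi-projective variety $(\mb{P}^4/\mu_5)\smallsetminus\{5 \text{ points}\}$ --- a statement valid in every characteristic --- to conclude that $X=(\mb{P}^4/\mu_5)\cap H$ is smooth. The cover $Y\to X$ is then an inseparable $\mu_5$-torsor with $Y$ singular. This also undercuts parts of your step (c): you cannot quote $H^j(Y,\mc{O}_Y)=0$ ``as $Y$ is a smooth quintic'' (use the ideal-sheaf sequence $0\to\mc{O}(-5)\to\mc{O}\to\mc{O}_Y\to 0$ instead, which needs no smoothness), and descending the residue form along an inseparable quotient of a singular variety needs justification. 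The paper sidesteps the latter by proving $\omega_X\cong\mc{O}_X$ on the generic fiber of the integral model, where the cover is \'etale and your weight computation is valid, and then specializing: a section of a line bundle on the integral model whose zero locus has codimension at least $2$ is nowhere vanishing.
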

\begin{proof}
First we compute the dimensions of invariant quintics under the above actions of $\mu_5$. Note that the invariant polynomials are spanned by invariant monomials.  In \cite{lang} the  computation was done for the $\mu_5$-action on $\mb{P}^3$ given by the action on the last four coordinates:
\begin{equation}\label{eqn:mu5actionp3}
(X_1:X_2:X_3:X_4)\mapsto (\zeta X_1:\zeta^2X_2:\zeta^3 X_3:\zeta^4 X_4).
\end{equation}
The dimensions are encoded in the power series
\begin{equation}\label{eqn:powerseries}
1+2x^2+4x^3+7x^4+12x^5+16x^6+24x^7+\cdots;
\end{equation}
that is, the coefficient of $x^n$ is the dimension of the invariant polynomials of degree $n$.
The power series above is given by 
\[
\frac{1}{5}\big[(1-x)^{-4}+4(1-\zeta x)^{-1}(1-\zeta^2 x)^{-1}(1-\zeta^3 x)^{-1}(1-\zeta^4 x)^{-1}\big],
\]
and it is in turn a theorem of Molien that this gives the desired dimensions (cf \cite[p.421]{lang}).

Now we return to our case of interest, which is the $\mu_5$-action on $\mb{P}^4$ given by (\ref{eqn:mu5actionp4}), instead of the $\mb{P}^3$ case given by (\ref{eqn:mu5actionp3}); in what follows we will refer to these as the $\mb{P}^4$ and $\mb{P}^3$ cases respectively. Since the $\mu_5$-action  on the additional variable $X_0$ is trivial, the invariant polynomials of degree $n$ in the $\mb{P}^4$ is simply given by the invariant polnomials in degree $\leq n$ in the $\mb{P}^3$ case, multiplied by  the appropriate power of $X_0$ to make it of degree $n$.  Therefore there are 
\[
1+2+4+7+12=26
\]
invariant quintics in the $\mb{P}^4$ case.

As in \cite{lang} we show that the map 
\[
\mb{P}^4 \rightarrow \mb{P}^{25}
\]
given by these  $26$ invariant quintics is an embedding. To do this it suffices to show that the invariant degree $10$ polynomials, degree $15$ polynomials, etc, are all generated by the invariant quintics. It is easier to do this calculation in terms of the invariant polynomials in the $\mb{P}^3$ case; translated to this setting, the statement becomes that the degree $\geq 6$ invariant polynomials are all generated by those of degree $\leq 5$. We now explicitly check this. 
\begin{itemize}
    \item degree 2: $X_1X_4$, $X_2X_3$;
    \item degree 3: $X_1^2X_3$, $X_2^2X_1$, $X_3^2X_4$, $X_4^2X_2$;
    \item degree 4: $X_1^3X_2$, $X_2^3X_4$, $X_3^3X_1$, $X_4^3X_3$, $(X_1X_4)^2$, $(X_2X_3)^2$, $X_1X_2X_3X_4$. 
\end{itemize}

On the other hand, from (\ref{eqn:powerseries}) above we have that the number of invariant degree six polynomails is 16 (for the $\mb{P}^3$ case).

In degree six, none of the powers $X_i^6$ are invariant, and hence each invariant monomial has at least two variables. Suppose we have such an invariant monimal $\psi$; if it is not divisible by any cube $X_i^3$, then it is straightforward to check that it must be one of 
\[
(X_1X_4)^2X_2X_3, \ X_1X_4(X_2X_3)^2,
\]
which is certainly generated by lower degree invariants (namely the invariant degree two polynomials). On the other hand, if the invariant monomial $\psi$ is divisible by $X_i^3$, say $X_1^3$ without loss of generaility, since we have the invariants 
\[
X_1X_4, \ X_1^2X_3, \ X_1^3X_2
\]
in degrees two, three, four respectively, and $\psi$ contains at least two variables, it is divisible by a lower degree invariant monomial, as required. This shows that the degree six invariants are generated by lower degree invariants.

For degree seven and eight, again an invariant monomial $\psi$ must have at least two variable; in degree seven all such $\psi$ are divisible by $X_i^3$ for some $i$, and we can argue as above, and in degree eight, if it is not divisible by $X_i^3$ for any $i$, then it must be 
\[
(X_1X_2X_3X_4)^2,
\]
which is generated by lower degree invariants. 

Finally, in degree nine and higher, every monomial $\psi$ is divisible by $X_i^3$ for some $i$, so as long as it contains at least two variables we are done. On the other hand, in degrees $n$ divislbe by 5, we could have the monomials $X_i^n$, but these are again generated by the invariant quintics $X_i^5$. Therefore we have shown that all invariant polynomials of degree $5n$ where $n\geq 2$ (in the $\mu_5$-action in the $\mb{P}^4$-case) are all by the quintic invariants, as required.

We may now proceed as in \cite{lang}. From the map $\mb{P}^4\rightarrow \mb{P}^{25}$, and the fact that the fixed points of the $\mu_5$-action  on $\mb{P}^4$ are isolated points, a generic hyperplane $H$ will not contain any fixed points. Therefore the threefold 
\[
X\defeq \mb{P}^4/\mu_5\cap H
\]
is smooth by Bertini's theorem, and it is the quotient of the quintic hypersurface in $\mb{P}^4$ defined by $H$, on which the $\mu_5$-action is free.

Finally we show that the quotients $X$ have trivial canonical bundle.
\begin{claim}
We have $\Omega^3_X\cong \mc{O}_X$.
\end{claim}
\begin{proof}
First observe  that the above construction  works just as well in mixed characteristic; that is, the same construction and arguments as above give varieties over $\mb{Z}_p$ (or perhaps some other local ring-the argument will go through without change), and so for the remainder of the proof we denote by $X$ this integral model, and $X[1/p]$  its generic fiber. We first  show that $\Omega^3$ is trivial in characteristic zero: that is, on the variety $X[1/5]$. For this, we need to find a nowhere vanishing section of $\Omega^3$. Let $Y$  be the $\mu_5$-cover of $X[1/5]$; since we are now in characteristic zero, $Y$ is smooth and it is a quintic hypersurface in $\mb{P}^4$, so certainly $\Omega^3_Y$ is trivial, and let us denote by $\sigma$ a nowhere vanishing section. Now by adjunction we have
\[
\Omega^3_Y\cong \Omega^4_{\mb{P}^4}(5)|_Y\cong\mc{O}_{\mb{P}^4}|_Y,
\]
and taking global sections we have that $\sigma$ is invariant under the $\mu_5$-action, and so descends to $X$; furthermore this descended section is also nowhere vanishing, and hence $\Omega^3_{X[1/p]}$ is trivial, which is what we wanted. 

It remains to show the same for the integral model $X$. On $X$ we have the exact sequence
\[
0\rightarrow \Omega^3_X\xrightarrow{\cdot p}\Omega^3_X\rightarrow \Omega^3_{\overline{X}}\rightarrow 0
\]
where we have denoted by $\overline{X}$ the special fiber of $X$. Hence we have an injection 
\[
H^0(X, \Omega^3_X)/pH^0(X, \Omega^3_X)\hookrightarrow H^0(\overline{X}, \Omega^3_{\overline{X}}). 
\]
Therefore, by taking an appropriate multiple of the section $\sigma$ above, we see that  the line bundle $\Omega_X$ has a section whose vanishing locus has codimension at least 2, and hence is everywhere non-zero. Therefore $\Omega^3_X$ is the trivial line bundle, as required.
\end{proof}

This concludes the proof of the theorem.
\end{proof}
\begin{rmk}
As a consistency check, we can compute the Hodge numbers of the threefold $X$. In characteristic zero, according to the proof of Theorem \ref{thm:godeauxconstruct} there are 26 $\mu_5$-invariant quintics, and quotienting by the subgroup of $\GL(5)$ stabilizing these quintics (up to scalars), namely the group of diagonal matrices $\mb{G}_m^5$, there are 
\[
21=26-5
\]
possible deformations of $X$. Therefore $h^{21}=21$; note that $h^{11}=1$ (the same as the covering hypersurface $Y$), and so the Euler characteristic is 
\[
\chi(X)=-40=-200/5=\chi(Y)/5,
\]
as expected. 
\end{rmk}

\printbibliography
\end{document}